\newtheorem{thm}{Theorem}[section]
\newtheorem{dfn}[thm]{Definition}
\newtheorem{prp}[thm]{Proposition}%[section]
\newtheorem{lmm}[thm]{Lemma}%[section]
\newtheorem{crl}[thm]{Corollary}%[section]
\newtheorem{example}[thm]{Example}%[section]
\newtheorem{rmk}[thm]{Remark}%[section]
\numberwithin{equation}{section}
\theoremstyle{remark}
\newcommand{\Hol}{\mbox{{\rm Hol}}}
\newcommand{\Z}{\Bbb Z}
\newcommand{\C}{\Bbb C}
\newcommand{\R}{\Bbb R}
\newcommand{\T}{\Bbb T}
\renewcommand{\P}{{\rm Po}}
\newcommand{\SP}{\mbox{{\rm SP}}}
\newcommand{\Map}{\mbox{{\rm Map}}}
\newcommand{\CP}{\Bbb C {\rm P}}
\newcommand{\dis}{\displaystyle}
\newcommand{\p}{\prime}
\newcommand{\SZ}{{\mathcal{X}}^{D}}
\newcommand{\SZd}{{\mathcal{X}}^{D+\textit{\textbf{a}}}}
\newcommand{\I}{\mbox{{\rm (i)}}}
\newcommand{\II}{\mbox{{\rm (ii)}}}
\newcommand{\III}{\mbox{{\rm (iii)}}}
\newcommand{\IV}{\mbox{{\rm (iv)}}}
\newcommand{\XS}{X_{\Sigma}}
\newcommand{\GS}{G_{\Sigma}}
\newcommand{\dmin}{d_{\rm min}}
\newcommand{\rmin}{r_{\rm min}}
\newcommand{\KS}{\mathcal{K}_{\Sigma}}
\title{\bf
The homotopy type of
spaces of rational curves on a toric variety}
\author{Andrzej Kozlowski\footnote{%
Institute of Applied Mathematics and Mechanics,
University of Warsaw, Banacha 2, 02-097 Warsaw, Poland
(E-mail: akoz@mimuw.edu.pl)
}
%\curraddr{}
%\email{akoz@mimuw.edu.pl}
%\thanks{}
\  and \ 
%    author two information
%\author{
Kohhei Yamaguchi\footnote{%
Department of Mathematics,
University of Electro-Communications,  Chofu, Tokyo 182-8585, Japan
(E-mail: kohhe@im.uec.ac.jp);
The second author is supported by 
JSPS KAKENHI Grant Number 26400083.
\newline
\quad 2010 {\it Mathematics Subject Classification.} Primary 55P10; Secondly 55R80, 55P35, 14M25.}}
\date{}
\begin{document}
\maketitle
%%

%%%(Abstract)%%%%%.
\begin{abstract}
Spaces of holomorphic maps 
from the Riemann sphere
to  various complex manifolds (holomorphic curves )
have played an important role in several area of mathematics.
In a seminal paper G. Segal  investigated the homotopy type of holomorphic curves on complex projective spaces and M. Guest on  compact smooth toric varieties..
Recently Mostovoy and Villanueva, obtained a far reaching generalisation of these results, and in particular (for holomorphic curves)  improved the stability dimension obtained by Guest. 
In this paper, we    
generalize  their result  to holomorphic curves, 
on certain  non-compact smooth toric varieties. 
\end{abstract}
%%%%%%%%%%%%%%%%%%%%%

%%%%(SECTION 1: Introduction)%%%%
%%%%%
\section{Introduction}\label{section 1}
%%%%%
%%%%%
\paragraph{The motivation for this paper.}
%%%
For a complex manifold $X$, let 
$\Map^* (S^2,X)$ (resp. $\Hol^*(S^2,X)$)
denote the space of all base point preserving continuous maps
(resp. base point preserving holomorphic maps) 
from the Riemann sphere $S^2$ 
to $X$.
%If $m=1$, we can identify $\CP^1$ as the Riemann surface $S^2$
%and %in this case
The relation between the topology
of the space   $\Hol^* (S^2,X)$ and that of the space $\Map^* (S^2,X)$
has long been an object of study in several areas of mathematics and 
physics
(e.g. \cite{AJ}, \cite{A}).
Since Segal's  seminal study \cite{Se} of the case $X=\CP^n$, 
a number of mathematicians have investigated this and various closely related problems.
In particular, 
M. Guest \cite{Gu2} obtained the partial generalization of Segal's result
to the case of smooth compact toric varieties $X$. 
More recently,  J. Mostovoy and
E. Munguia-Villanueva \cite{MV}  proved a far-reaching generalization of Guest's result for the case of spaces of holomorphic maps from $\CP^{m}$ to a compact toric variety $X$ for  $m\geq 1$. The homology stability dimension which they obtained is also an improvement on Guest's result for the case $m=1$. 

In \cite{KY6}, the present authors studied this problem of the case $m=1$
%(for maps from $S^2=\CP^1$ only)  
%(also for $m=1$) and
for 
a certain family of non-compact
smooth toric subvarieties $X_I$ of $\CP^{n}$, and showed that the result of Mostovoy-Villanueva \cite{MV} can be extended to 
this case (with the same stability dimension).
\par\vspace{1mm}\par
In this paper, we  shall
prove that this result can be further extended to 
{\it non-compact} smooth toric varieties $X$ which satisfy certain two conditions
(see the conditions (\ref{equ: homogenous}.1) and (\ref{equ: homogenous}.2)). These conditions are satisfied for a wide range of smooth toric varieties (including all compact ones). 

 In fact, we will do better and show that under the 
 certain condition \lq\lq homology equivalence\rq\rq can be replaced by \lq\lq homotopy equivalence\rq\rq (up to the same dimension). 
%%%
%%% 
\par\vspace{1mm}\par
%%%%
The broad outline of our argument is analogous to Segal's seminal paper \cite{Se} (a brief sketch of such an argument is given in \cite[\S 5]{Gu2}).
%at the end of \S 5 of \cite{Gu2}). 
Namely, for a smooth toric variety $X$, we first prove  that there is a homotopy equivalence between certain limits of spaces $\Hol_D^*(S^2,X)$ of holomorphic maps, stabilized with respect to a suitably defined degree $D$, 
and the double loop space 
$\Map^*(S^2,X)=\Omega^2X$.  We can refer to this as 
{\it the stable result}.  The method used to prove it is a generalization of the {\it  scanning map}  technique used by Segal in \cite{Se}. In particular, we describe a generalization to the case of toric varieties of a fibration sequence
that plays the key role in Segal's argument
(see Proposition \ref{prp: Segal fibration}).
%%%
\par
Note that 
in  \cite{MV} a quite different stabilization 
is used, which is based on 
the Stone-Weierstrass theorem.
This stabilization has the advantage that it can used in the case of holomorphic maps 
from $\CP^m$ to a compact toric variety $X$ for any $m\ge 1$. 
However, the usefulness of the Stone-Weierstrass theorem is based on the fact that two holomorphic maps that are \lq uniformly close\rq, with respect to some metric, are actually homotopic. 
This is true when the metric on X is complete (e.g. when $X$ is compact), but not for general $X$. We are, therefore, unsure if our results can be extended to the case $\Hol^*(\CP^m,X)$, for $m>1$. Even if this is possible, we believe that our generalization of Segal's argument to the case of toric varieties is of some independent interest.
\par
%%%%%%%%%% 
The second part of the paper is concerned with establishing  \lq\lq 
homology stability dimensions\rq\rq\  for the inclusion map from  $\Hol_D^*(S^2,X)$ 
to the  double loop space $\Omega^2_DX$ of maps of degree $D$. 
These stability dimensions depend both on the degree $D$ and the toric variety $X$. The method is based on an a modification of the Vassiiev spectral sequence \cite{Va} due to Mostovoy  (\cite{Mo3}, \cite{Mo2}). The stability dimensions in homology are obtained by identifying a stable region of this spectral sequence. 
By observing that under a certain condition (described later) these mapping spaces  are simply connected, we can strengthen our  results by replacing homology equivalences by homotopy equivalences (up to the same dimension).

\paragraph{Basic definitions and notations.}
{\it A convex rational polyhedral cone} $\sigma$ 
in $\R^n$
is a subset of $\R^n$ of the form
%%%()%%
\begin{equation*}
\sigma =%\mbox{Cone}(S)=
\mbox{Cone}(\textit{\textbf{m}}_1,\cdots,\textit{\textbf{m}}_s)
=
\{\sum_{k=1}^s\lambda_k\textit{\textbf{m}}_k:\lambda_k\geq 0
\mbox{ for any }1\leq k\leq s\}
\end{equation*}
%%%
for some finite set $S=\{\textit{\textbf{m}}_k\}_{k=1}^s\subset\Z^n$.
The dimension of $\sigma$ is the dimension of
the smallest subspace which contains $\sigma$.
%%%%
A convex rational polyhedral cone $\sigma$
is called
{\it  strongly convex} if
$\sigma \cap (-\sigma)=\{{\bf 0}\}$.
%%
%%%
{\it A face} $\tau$ of 
%a strongly convex rational polyhedral cone
$\sigma$ is a subset $\tau\subset \sigma$ of the form
$\tau =\sigma\cap \{\textit{\textbf{x}}\in\R^n:L(x)=0\}$ 
for some linear form $L$ on $\R^n$, such that
$\sigma \subset \{\textit{\textbf{x}}\in\R^n:
L(\textit{\textbf{x}})\geq 0\}.$
In this case,
if we set 
$\{k:L(\textit{\textbf{m}}_k)=0\}=\{i_1,\cdots ,i_s\},$
we easily see that $\tau =\mbox{Cone}(\textit{\textbf{m}}_{i_1},
\cdots , \textit{\textbf{m}}_{i_s})$ and so that
a face $\tau$ is also a 
strongly convex rational polyhedral cone.
\par
A finite collection $\Sigma$ of strongly convex rational polyhedral cones
in $\R^n$ 
is called {\it a fan} in $\R^n$ if 
every face of an element of $\Sigma$ belongs to $\Sigma$ and
the intersection of any two elements of $\Sigma$ is a face of each.
%%%%%
\par
An $n$ dimensional irreducible normal  variety
$X$ (over $\C$) is called {\it a toric variety}
if it has a Zariski open subset
 $\T^n_{\C}=(\C^*)^n$ and the action of $\T^n_{\C}$ on itself
extends to an action of $\T^n_{\C}$ on $X$.
The most significant property of a toric variety
is the fact that it is characterized up to isomorphism entirely by its 
associated fan 
$\Sigma$. 
We denote by $\XS$ the toric variety associated to a fan $\Sigma$
(see \cite{CLS} in detail).
\par
It is well known that
there are no holomorphic maps
$\CP^1=S^2\to \T^n_{\C}$ except the constant maps, and that
the fan $\Sigma$ of $\T^n_{\C}$ is $\Sigma =\{{\bf 0}\}$.
Hence, without loss of generality
we always assume that $\XS\not=\T^n_{\C}$ and that
any fan $\Sigma$ in $\R^n$
satisfies the condition
%%%()%%
%\begin{equation}\label{eq: assumption0}
$\{{\bf 0}\}\subsetneqq \Sigma .$
%\end{equation}
%%%%
%%%%%
%%%
%%(Definition 1.1)%%
\begin{dfn}\label{dfn: fan}
%%%%%%
{\rm
For such a fan $\Sigma$ in $\R^n$,
%\begin{equation}\label{eq: assumption0}
let
%(1.1)%%
\begin{equation}\label{eq: one dim cone}
%%%%%%%
\Sigma (1)=\{\rho_1,\cdots ,\rho_r\}
\end{equation}
%%%%%
%where $r\geq 1$ is a positive integer, 
denote the set of all
one dimensional cones in $\Sigma$ for some positive integer $r$.
%Note that $r\geq 1$ is a positive integer under the condition (\ref{eq: assumption0}).
%%
For each integer $1\leq k\leq r$,
we denote by $\textbf{\textit{n}}_k\in\Z^n$ 
\textit{the primitive generator} of
$\rho_k$, such that %that is, we have
%%()%%
%\begin{equation}\label{eq: primitive}
%%%
$\rho_k \cap \Z^n=\Z_{\geq 0}\cdot \textbf{\textit{n}}_k.$
%\end{equation}
%%%%%%%%
%
Note that
$\rho_k =\mbox{Cone}(\textit{\textbf{n}}_k)=\R_{\geq 0}\cdot \textit{\textbf{n}}_k$.
}
\end{dfn}
%%%(End of Definition 1.1)%%%%

%%%(Definition 1.2)%%%
\begin{dfn}
{\rm
Let $K$ be a simplicial complex on the index set $[r]=\{1,2,\cdots ,r\}$,%
%%%%%%%%%%%%%%%%%%
%%%(Footnote 1)%%%%%%
\footnote{%
Let $K$ be some set of subsets of $[r]$.
Then the set $K$ is called {\it an abstract simplicial complex} on the index set $[r]$ if
the following condition holds:
 if $\tau \subset \sigma$ and $\sigma\in K$, then $\tau\in K$.
%\newline
In this paper by a simplicial complex $K$ we always mean an  \textit{an abstract simplicial complex}, 
and we always assume that a simplicial complex $K$  contains the empty set 
$\emptyset$.
}
%%%(End of FootNote 1)%%%%
%%%%%
and let $(\underline{X},\underline{A})=\{(X_1,A_1),\cdots ,(X_r,A_r)\}$ be a set of pairs of based spaces
such that $A_i\subset X_i$ for each $i$.
%%%
\par
(i)
 {\it The polyhedral product} $\mathcal{Z}_K
(\underline{X},\underline{A})$ of an $r$-tuple of pairs of spaces
$(\underline{X},\underline{A})$ with respect to $K$
is defined
by
$\dis\mathcal{Z}_K(\underline{X},\underline{A})
=
\bigcup_{\sigma\in K}(\underline{X},\underline{A})^{\sigma},$
where we set
%%%(1.2)%%
\begin{equation}
%%%
(\underline{X},\underline{A})^{\sigma}
=
\{(x_1,\cdots ,x_r)\in X_1\times \cdots \times X_r:
x_k\in A_k\mbox{ if }k\notin \sigma\}.
\end{equation}
When $(X_i,A_i)=(X,A)$ for each $1\leq i\leq r$, we write
$\mathcal{Z}_K(X,A)=\mathcal{Z}_K(\underline{X},\underline{A}).$
%%%%
\par
(ii)
For each subset $\sigma =\{i_1,\cdots ,i_s\}\subset [r]$, let
$L_{\sigma}$ denote {\it the coordinate subspace} of $\C^r$ defined by
%%(1.3)%%
\begin{equation}
%%%%%%%
L_{\sigma}=\{(x_1,\cdots ,x_r)\in\C^r:
x_{i_1}=\cdots =x_{i_s}=0\}.
\end{equation}
%%%%%%
Let $U(K)$ denote {\it the complement of coordinate subspaces of type} $K$
given by
%%%(1.4)%%%
\begin{equation}
U(K)=
\C^r\setminus \bigcup_{\sigma\in I(K)}L_{\sigma}
=\C^r\setminus \bigcup_{\sigma\subset [r],\sigma\notin K}L_{\sigma},
%%%
\end{equation}
%%%%%%
where we set
%%%%(1.5)%%
\begin{equation}\label{eq: IK}
%%%%%%%%%%
I(K)=\{\sigma\subset [r]:\sigma\notin K\}.
\end{equation}
%%%%%%
Note that $U(K)$ is the Alexander dual of the space
$L(\Sigma)=\bigcup_{\sigma\in I(K)}L_{\sigma}$
in $\C^r$, and it is easy to see that
%%(1.6)%%
\begin{equation}\label{equ: UK}
%%%%%%%%%%
U(K)=\mathcal{Z}_K(\C,\C^*).
\end{equation}
%%%%%%
%%%%%
\par
%%%(iii)%%%%
(iii)
%%%%%%%%%%%
For a fan $\Sigma$ in $\R^n$ as in Definition \ref{dfn: fan},
let $\mathcal{K}_{\Sigma}$ denote {\it the underlying simplicial complex of}  $\Sigma$
defined by
%%%%%
%%%(1.7)%%%%%%
\begin{equation}
%%%%%%%%%%%%%
\KS =
\Big\{\{i_1,\cdots ,i_s\}\subset [r]:
\textbf{\textit{n}}_{i_1},\textbf{\textit{n}}_{i_2},\cdots
,\textbf{\textit{n}}_{i_s}
\mbox{ span a cone in }\Sigma\Big\}.%\cup \{\emptyset\}.
\end{equation}
%%%%%%
It is easy to see that $\KS$ is a simplicial complex on the index set $[r]$.
}
\end{dfn}
%%%(end of Definition 1.2)%%%%%
%%
%%%
%%%(Remark 1.3)%%%
\begin{rmk}\label{rmk: fan}
%%%%%%%
{\rm
The fan $\Sigma$ is completely determined by
the pair
$(\mathcal{K}_{\Sigma},\{\textit{\textbf{n}}_k\}_{k=1}^r).$
In fact, if we set 
%%%%%%%%%
%%()%%
%\begin{equation}
$\mbox{C}(\sigma)=
%\begin{cases}
\mbox{Cone}
(\textit{\textbf{n}}_{i_1},\cdots ,\textit{\textbf{n}}_{i_s})$
if
$\sigma =\{i_1,\cdots ,i_s\}\in \mathcal{K}_{\Sigma}$
and $C(\emptyset)=\{{\bf 0}\}$,
%%%%
then
it is easy to see that
%%()%%
%\begin{equation}
$\Sigma =
\{\mbox{C}(\sigma):\sigma \in \mathcal{K}_{\Sigma}\}.$
%\end{equation}
%%%
\qed
%%%%
}
%%%%%%%
\end{rmk}
%%%%%%%(End of Remark 1.3)%%%%%%%%

%%%%%%%%%%%
%%%(Definition 1.4)%%%
\begin{dfn}[\cite{BP}, Definition 6.27, Example 6.39]
%%%
{\rm
Let $K$ be a simplicial complex on the index set $[r]$.
Then we denote by $\mathcal{Z}_K$ and $DJ(K)$
{\it the moment-angle complex} of $K$ and 
{\it the Davis-Januszkiewicz space} of $K$, 
respectively,
which are
defined by}
%%(1.8)%%%
\begin{equation}\label{DJ}
%%%%%%%%
\mathcal{Z}_K=\mathcal{Z}_K(D^2,S^1),\quad
DJ(K)=\mathcal{Z}_K(\CP^{\infty},*).
%%%
\end{equation}
%%%%%
\end{dfn}
%%(End of Example 1.3)%%%

%%%(Definition 1.5)%%
\begin{dfn}
%%%%%%%
{\rm
Let $\Sigma$ be a fan in $\R^n$ as in Definition \ref{dfn: fan}.
%%%
Let $\GS\subset \T^r_{\C}=(\C^*)^r$ denote the subgroup defined by
%%%%%%%%
%%(1.9)%%
\begin{equation}
%%%%%%%%
\GS =\{(\mu_1,\cdots ,\mu_r)\in \T^r_{\C}:
\prod_{k=1}^r(\mu_k)^{\langle \textbf{\textit{n}}_k,\textbf{\textit{m}}
\rangle}=1
\mbox{ for any }\textbf{\textit{m}}\in\Z^n\},
%%%%%%%%%%%%
\end{equation}
%%%
where
$\langle \textbf{\textit{u}}, \textbf{\textit{v}}\rangle=
\sum_{k=1}^nu_kv_k$ for
$\textbf{\textit{u}}=(u_1,\cdots ,u_n)$
and  $\textbf{\textit{v}}=(v_1,\cdots ,v_n)\in\R^n$.
%%%%
\par
Then consider the natural $\GS$-action on
$\mathcal{Z}_{\mathcal{K}_{\Sigma}}(\C,\C^*)$ given by 
coordinate-wise multiplication, i.e.
%%%%%%%%%
%%()%%
%\begin{equation}\label{eq: multiplication}
%%%%%%%%%%
$
(\mu_1,\cdots ,\mu_r)\cdot(x_1,\cdots ,x_r)=
(\mu_1x_1,\cdots ,\mu_rx_r).
$
%%%%%%%%%%
\par
Let 
$\mathcal{Z}_{\mathcal{K}_{\Sigma}}(\C,\C^*)/\GS =U(\KS)/\GS$
denote the corresponding orbit space.
}
\end{dfn}
%%(End of Definition 1.5)%%%%

%%%(Theorem 1.6: Theorem of Cox)%%
\begin{thm}
[\cite{Cox1}, Theorem 2.1; \cite{Cox2}, Theorem 3.1]\label{prp: Cox}
%%%%%%%%%
Suppose that the set $\{\textit{\textbf{n}}_k\}_{k=1}^r$ 
of all primitive generators  spans $\R^n$
$($i.e. $\sum_{k=1}^r\R\cdot \textit{\textbf{n}}_k =\R^n).$
\par
%%(i)%%%
$\I$
Then
there is a natural isomorphism
%%(1.10)%%
\begin{equation}\label{equ: homogenous1}
\XS\cong
\mathcal{Z}_{\mathcal{K}_{\Sigma}}(\C,\C^*)/\GS =U(\KS)/\GS . 
\end{equation}
%%%%
%%%
\par
%%(ii)%%%%%%
$\II$
%%%
If
$f:\CP^m\to \XS$ is a holomorphic map, %then
there exists an $r$-tuple $D=(d_1,\cdots ,d_r)\in (\Z_{\geq 0})^r$
of non-negative integers
satisfying the condition
$\sum_{k=1}^rd_k\textit{\textbf{n}}_k={\bf 0}$
and homogenous polynomials $f_i\in \C [z_0,\cdots ,z_m]$ of degree $d_i$
$(i=1,2,\cdots, r)$
such that 
polynomials $\{f_{i}\}_{i\in\sigma}$ have no common root except
${\bf 0}\in\C^{m+1}$ for each $\sigma \in I(\mathcal{K}_{\Sigma})$
and that
the diagram
%%(1.11)%%
\begin{equation}\label{eq: homogenous-CD}
%%%%
\begin{CD}
\C^{m+1}\setminus \{{\bf 0}\} @>(f_1,\cdots ,f_r)>> U(\mathcal{K}_{\Sigma})
\\
@V{\gamma_m}VV @V{q_{\Sigma}}VV
\\
\CP^m @>f>> U(\mathcal{K}_{\Sigma})/\GS =\XS
\end{CD}
\end{equation}
%%%%%%%%
is commutative,
where $\gamma_m:\C^{m+1}\setminus \{{\bf 0}\}\to \CP^m$ denotes
the canonical Hopf fibering and the map $q_{\Sigma}$ is a canonical projection
induced from the identification $($\ref{equ: homogenous1}$)$.
In this case, we call this holomorphic map $f$ as a holomorphic map of degree $D=(d_1,\cdots ,d_r)$ and we
 represent it  as
%%(1.12)%%%
\begin{equation}\label{equ: homogenous}
f=[f_1,\cdots ,f_r].
\end{equation}
%%%%%%%%%%%%
\par
%%%(iii)%%%%
$\III$
If $g_i\in \C [z_0,\cdots ,z_m]$ is a homogenous polynomial
of degree $d_i$
$(1\leq i\leq r)$ such that $f=[f_1,\cdots ,f_r]=[g_1,\cdots ,g_r]$,
there  exists some element $(\mu_1,\cdots ,\mu_r)\in \GS$ such that
$f_i=\mu_i\cdot g_i$ for each $1\leq i\leq r$.
Thus, such $r$-tuple
$(f_1,\cdots ,f_r)$ of homogenous polynomials representing the holomorphic map $f$
is uniquely determined up to
$\GS$-action.
\qed
%%%
\end{thm}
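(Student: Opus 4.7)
The plan is to establish (i), (ii), (iii) in turn, following the standard GIT/Cox quotient construction.

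For (i), I would begin with the short exact sequence of abelian groups $0 \to M \to \Z^r \to \mathrm{Cl}(\XS) \to 0$, where $M = \Z^n$, the middle arrow sends $\textit{\textbf{m}}$ to $(\langle \textit{\textbf{n}}_k, \textit{\textbf{m}}\rangle)_{k=1}^r$, and the spanning hypothesis on the primitive generators ensures injectivity. Applying $\mathrm{Hom}(-,\C^*)$ produces $1 \to \GS \to \T^r_\C \to \T^n_\C \to 1$, whose kernel term coincides with the definition of $\GS$ given in the statement. I would then cover $\XS$ by its standard affine charts $U_\sigma = \mathrm{Spec}\,\C[\sigma^\vee \cap M]$ for $\sigma \in \Sigma$ and, for each such $\sigma$, identify $U_\sigma$ with $(U(\KS) \cap V_\sigma)/\GS$, where $V_\sigma \subset \C^r$ is the open locus on which every coordinate $x_k$ corresponding to a ray $\rho_k \not\subset \sigma$ is invertible. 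A routine compatibility check of these identifications over the cones of $\Sigma$ then yields (\ref{equ: homogenous1}).

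For (ii), the strategy is to lift $f$ via divisors. Each ray $\rho_k$ determines a torus invariant Weil divisor $D_k \subset \XS$; the effective divisor $f^{-1}(D_k)$ on $\CP^m$ has the form $d_k H$ for a unique $d_k \in \Z_{\geq 0}$, where $H$ is the hyperplane class. A defining section of $\mathcal{O}(d_k H)$ on $\CP^m$ is represented by a homogeneous polynomial $f_k$ of degree $d_k$, well defined up to a nonzero scalar. The relation $\sum_{k=1}^r d_k \textit{\textbf{n}}_k = \mathbf{0}$ expresses the vanishing of $f^*(\sum d_k[D_k])$ in $\mathrm{Cl}(\CP^m) = \Z$ via the exact sequence above, while the no-common-root condition on $\{f_i\}_{i \in \sigma}$ for $\sigma \in I(\KS)$ is equivalent to the image of $(f_1,\ldots,f_r): \C^{m+1}\setminus\{\mathbf{0}\} \to \C^r$ avoiding every coordinate subspace $L_\sigma$ with $\sigma \notin \KS$, which is forced by $f(\CP^m) \subset \XS = U(\KS)/\GS$.

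For (iii), suppose $[f_1,\ldots,f_r] = [g_1,\ldots,g_r]$. For every $\tilde p \in \C^{m+1}\setminus\{\mathbf{0}\}$ the tuples $(f_i(\tilde p))_i$ and $(g_i(\tilde p))_i$ lie in the same $\GS$-orbit in $U(\KS)$, which produces a holomorphic map $\phi: \C^{m+1}\setminus\{\mathbf{0}\} \to \GS$; equivariance under the Hopf $\C^*$-action makes $\phi$ descend to a holomorphic map $\bar\phi: \CP^m \to \GS$. Since $\GS \subset \T^r_\C$ is affine and $\CP^m$ is complete, $\bar\phi$ must be constant, producing the required $(\mu_1,\ldots,\mu_r) \in \GS$. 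The main obstacle I expect is the chart by chart verification in (i): one must match the gluing data of the toric scheme $\XS$ with the $\GS$-quotient description on each $U_\sigma$, unwinding the duality between $M$ and $\Z^r$ carefully; parts (ii) and (iii) then reduce to standard divisor theory and a GAGA style rigidity for maps from a complete variety to an affine group.
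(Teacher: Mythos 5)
This statement is quoted in the paper without proof: it is imported directly from Cox's two papers (the \qed follows the statement), so there is no internal argument of the authors to compare yours against. Measured against the cited sources, your outline is essentially the standard proof of the Cox quotient construction and of the functor--of--points description of a toric variety: the exact sequence $0\to M\to \Z^r\to \mathrm{Cl}(\XS)\to 0$ (injective on the left precisely because the $\textit{\textbf{n}}_k$ span $\R^n$), dualized by the divisible group $\C^*$ to identify $\GS$; the chart-by-chart identification of $U_\sigma$ with $V_\sigma/\GS$; the pullback of the torus-invariant divisors $D_k$ to produce the degrees $d_k$ and the sections $f_k$; and rigidity of holomorphic maps from the complete variety $\CP^m$ to the affine group $\GS$ for uniqueness. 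This is the right architecture and I have no objection to it as a reconstruction of the cited result.

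Three points are thinner than they should be. In (ii) you construct the $f_k$ as pullback sections but never verify commutativity of the diagram (\ref{eq: homogenous-CD}), i.e.\ that the tuple $(f_1,\dots,f_r)$, after resolving the $r$ independent scalar ambiguities compatibly, actually descends to the given $f$; this is the substantive content of Cox's functor theorem and requires matching against the chart description from (i) (e.g.\ on the dense torus). You should also address the degenerate case $f(\CP^m)\subset D_k$, where the pullback section vanishes identically and neither $d_k$ nor $f_k$ is determined by the divisor $f^{-1}(D_k)$. Finally, in (iii) your map $\phi$ to $\GS$ is well defined and holomorphic only because $\GS$ acts freely on $U(\KS)$ and $q_\Sigma$ is a principal bundle; that uses condition (\ref{equ: homogenous}.1) and Lemma \ref{lmm: principal}, not merely the spanning hypothesis stated in the theorem. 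A cleaner route, closer to Cox's, is to note that $g_i/f_i$ is a global meromorphic function on $\CP^m$ with trivial divisor (both $f_i$ and $g_i$ cut out $f^{-1}(D_i)$), hence a nonzero constant $\mu_i$, and then to check $(\mu_1,\dots,\mu_r)\in\GS$ by comparing the two lifts over the dense torus. None of these is a fatal flaw, but each is a real step that a complete proof must contain.
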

%%%%%%(End of Proposition 1.6)%%%%%

%%%%(Assumptions)%%%%%%%
\paragraph{Assumptions.}
%%%%%%
From now on, let $\Sigma$ be a fan in $\R^n$
satisfying the condition (\ref{eq: one dim cone})
as in Definition \ref{dfn: fan}, 
and
we shall assume  that the following two conditions hold.
%%%%(Assumptions)%%%%
\begin{enumerate}
%%(1.12.1)%%%%
\item[(\ref{equ: homogenous}.1)]
%%%%%%%%%%%%%%%
%The condition (\ref{eq: assumption0}) holds and
The set
$\{\textbf{\textit{n}}_k\}_{k=1}^r$ of primitive generators spans $\Z^n$
over $\Z$, i.e.
\newline
$\sum_{k=1}^r\Z\cdot \textit{\textbf{n}}_k=\Z^n$.
%%(1.12.2)%%
\item[(\ref{equ: homogenous}.2)]
%%%%
There is an $r$-tuple $D=(d_1,\cdots ,d_r)\in (\Z_{\geq 1})^r$ 
%of positive integers 
such that
$\sum_{k=1}^rd_k\textit{\textbf{n}}_k={\bf 0}.$
%%%
\end{enumerate}
%%%%%

%%%(Remark 1.7)%%%
\begin{rmk}\label{rmk: assumption}
%%%%%%%%%%%%%%%%%%
{\rm
(i)
Note that  the condition (\ref{equ: homogenous}.1) always holds
if $\XS$ is a compact smooth toric variety
(by Lemma \ref{lmm: toric} below).
\par
(ii)
If the condition (\ref{equ: homogenous}.1) holds,
one can easily see that  the set $\{\textit{\textbf{n}}_k\}_{k=1}^r$ spans
$\R^n$ over $\R$
and we have the  isomorphism (\ref{equ: homogenous1}).
%%%%%
%}
%%%%%%%%%%%%%%%%%%
%\end{rmk}
%%%(End of Remark 1.6)%%%
%
%
%%%(Remark)%%%
%\begin{rmk}\label{rmk: constant map}
%%%%%
%{\rm
\par
(iii)
Let $\Sigma$ denote the fan in $\R^2$ given by
$\Sigma =\{{\bf 0},\mbox{Cone}(\textit{\textbf{n}}_1),
\mbox{Cone}(\textit{\textbf{n}}_2)\}$
for the standard basis $\textit{\textbf{n}}_1=(1,0),$
$\textit{\textbf{n}}_2=(0,1)$.
Then the toric variety $\XS$ of  $\Sigma$ is
$\C^2$ which has trivial homogenous coordinates.
It is clearly a (simply connected) smooth toric variety, and
the condition (\ref{equ: homogenous}.1) also  holds.
%\par
However, in this case,
$\sum_{k=1}^2d_k\textit{\textbf{n}}_k={\bf 0}$ iff
$(d_1,d_2)=(0,0)$. 
Hence, it follows from (ii) of Proposition \ref{prp: Cox} that there are no holomorphic maps
$\CP^1=S^2\to \XS=\C^2$ other than the constant maps.
Assuming the condition
(\ref{equ: homogenous}.2) guarantees the existence of non-trivial
holomorphic maps. Of course, it would be sufficient to assume that 
$(d_1,\dots ,d_r)\not= (0,\dots 0)$ but if $d_i=0$ for some $i$, then the number $d(D,\Sigma)$ (defined in (1.17)) is not a positive integer
and our assertion (Theorem 1.9 below) is vacuous.
For this reason, we will assume the condition (\ref{equ: homogenous}.2).
\qed
}
\end{rmk}
%%(End of Example 1.7)%%
%%%%%
\paragraph{Spaces of holomorphic maps.}
%%%%%%%%
Let $\XS$ be a smooth toric variety and
 we make the identification $\XS =U(\KS)/\GS$. 
 Now
consider a base point preserving holomorphic map $f=[f_1,\cdots ,f_r]:
\CP^m\to \XS$ for the
case $m=1$.
In this situation,  we  identify $\CP^1=S^2=\C \cup \infty$ and choose the points $\infty$ and $[1,1,\cdots ,1]$ as the base points of
$\CP^1$ and $\XS$, respectively.
Then by taking $z=\frac{z_0}{z_1}$
we can view $f_k$ as a monic polynomial  $f_k(z)\in\C [z]$
of degree $d_k$ for each $1\leq k \leq r$ with the complex variable  $z$.
Now we can define the space of holomorphic maps as follows.

%%%%(Definition 1.8)%%%%%%
\begin{dfn}\label{dfn: holomorphic}
%%%%%%%%%%%%%%%%%%%%%%%%%%%
{\rm
(i)
Let $\P^d(\C)$ denote the space of all monic polynomials 
$f(z)=z^d+a_1z^{d-1}+\cdots +a_{d-1}z+a_d\in \C [z]$ of degree
$d$, and
we set
%%(1.13)%%%
\begin{equation}\label{eq: Pd}
%%%%%%%%
\P^D=\P^{d_1}(\C)\times \P^{d_2}(\C)\times\cdots \times \P^{d_r}(\C).
\end{equation}
%%%
Note that there is an homeomorphism
$\P^d(\C)\cong \C^d$ by identifying
$z^d+\sum_{k=1}^d a_kz^{d-k}
\mapsto (a_1,\cdots ,a_d)\in\C^d.$
\par
%%%(ii)%%%
(ii)
For $r$-tuple $D=(d_1,\cdots ,d_r)\in (\Z_{\geq 1})^r$ 
of positive integers
satisfying the condition (\ref{equ: homogenous}.2),
%%% 
let $\Hol_D^*(S^2,\XS)$ denote the
space of all $r$-tuples
$(f_1(z),\cdots ,f_r(z))\in \P^D$ of monic polynomials
satisfying the condition
%%%%%
\begin{enumerate}
%%()%%
\item[$(\dagger)$]
%%%%%%%
the polynomials
$f_{i_1}(z),\cdots ,f_{i_s}(z)$ have no common root
for any $\sigma =\{i_1,\cdots ,i_s\}\in I(\KS)$,
i.e.
$(f_{i_1}(\alpha),\cdots ,f_{i_s}(\alpha))\not= (0,\cdots ,0)$
for any $\alpha \in  \C$.
\end{enumerate}
%%%%%
By identifying 
$\XS =U(\KS)/\GS$ and $\CP^1=S^2=\C\cup\infty$,
define the natural inclusion map
%%%%%%%%
%%)%%
%%%%%%%%
$i_D:\Hol_D^*(S^2,\XS)\to \Map^*(S^2,\XS)=\Omega^2\XS$ by
%%%%%%
%%
%%(1.14)%%
\begin{equation}
%%%%%%%%%
i_D(f_1(z),\cdots ,f_r(z))(\alpha )=
\begin{cases}
[f_1(\alpha),\cdots ,f_r(\alpha)] & \mbox{ if }\alpha \in\C
\\
[1,1,\cdots ,1] & \mbox{ if }\alpha =\infty
\end{cases}
\end{equation}
%%%%%%%%%%
where
we choose the points $\infty$ and $[1,1,\cdots ,1]$
as the base points of $S^2$ and $\XS$, respectively.
%%
%\par
Since the representation of polynomials in $\P^D$ representing a 
base point preserving holomorphic map of degree $D$
is uniquely determined, 
the space $\Hol_D^*(S^2\XS)$ can be identified with the space of base point preserving holomorphic maps of 
degree $D$.
%\par
%%%
Moreover,
since $\Hol_D^*(S^2,\XS)$ is connected, the image of
$i_D$
is contained in a certain path-component of $\Omega^2\XS$,
which is denoted by $\Omega^2_D\XS$.
Then we have the natural inclusion map
%%%(1.15)%%%%%
\begin{equation}
%%%%%%%%
i_D:\Hol_D^*(S^2,\XS)\to \Map^*_D(S^2,\XS)=\Omega^2_D\XS .
\end{equation}
%%%%%%%
\par\vspace{1mm}\par
%%%%%
(iii)
We say that a set $\{\textbf{\textit{n}}_{i_1},\cdots ,\textbf{\textit{n}}_{i_s}\}$
%of some primitive generators
is a {\it a primitive collection} 
 if it does not span a cone in $\Sigma$ but any proper subset of it 
does.
\par
Then
define the integers $\rmin (\Sigma)$ and $d(D,\Sigma)$
by
%%(1.16)%%%%%%%
\begin{align}\label{eq: rmin}
%%%%%%%%%%%%%%%
\rmin (\Sigma)
&=\min\{s\in\Z_{\geq 1}:
\{\textbf{\textit{n}}_{i_1},\cdots ,\textbf{\textit{n}}_{i_s}\}
\mbox{ is a primitive collection}\},
\\
%%(1.17)%%%%%
d(D,\Sigma)&=(2\rmin  (\Sigma) -3)d_{\rm min}-2,
\mbox{ where }d_{\rm min}=\min \{d_1,\cdots ,d_r\}.
\end{align}
%%%%
%%%(iv)%%
\par
(iv) A map $f:X\to Y$ is {\it a homology equivalence through dimension} $N$
(resp. {\it a homotopy equivalence through dimension} $N$)
if the induced homomorphism
$
f_*:H_k(X,\Z) \to H_k(Y,\Z)$
$(\mbox{resp. }f_*:\pi_k(X)\to \pi_k(Y))
$
is an isomorphism for any $k\leq N$.
%%%%%%%
}
%%%%%%
\end{dfn}
%%%%%%%%(End of Definition 1.8)%%%
%\par\vspace{2mm}\par

\paragraph{The main results.}
The main result of this paper generalizes the results given in
\cite{KY6} and extend some result obtained in \cite{MV}
as follows.
%%%%%

%%%(Theorem 1.9: The main Theorem)%%%%%
\begin{thm}\label{thm: I}
%%% 
Let $\XS$ be a smooth toric variety such that 
the conditions $($\ref{equ: homogenous}.1$)$ and 
$($\ref{equ: homogenous}.2$)$ are satisfied.
Then
the inclusion map
$$
i_D:\Hol_D^*(S^2,\XS) \to \Omega^2_D\XS
$$ 
is a homotopy equivalence
through dimension $d(D,\Sigma)$ if $\rmin (\Sigma)\geq 3$
and a homology equivalence through dimension 
$d(D,\Sigma )=d_{min}-2$ if $\rmin (\Sigma)=2$.
%%%%
\end{thm}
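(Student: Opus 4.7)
My plan is to compare the Vassiliev--Mostovoy spectral sequence for $\Hol_D^*(S^2,\XS)$ with an analogous truncated spectral sequence associated with $\Omega^2_D \XS$, and to identify the range in which the two $E^1$-pages are isomorphic. Theorem~\ref{prp: Cox} gives the polynomial model $\XS = U(\KS)/\GS$, and the scanning-map fibration of Proposition~\ref{prp: Segal fibration} supplies the stable target. Concretely, following Mostovoy, one considers the discriminant
$$
\Sigma^D := \P^D \setminus \Hol_D^*(S^2, \XS) \subset \P^D \cong \C^{d_1+\cdots+d_r},
$$
whose points are tuples $(f_1,\ldots,f_r)$ violating $(\dagger)$: there exist $\alpha \in \C$ and $\sigma \in I(\KS)$ (which may be taken to be a primitive collection, hence of size at least $\rmin(\Sigma)$) with $f_i(\alpha)=0$ for all $i \in \sigma$. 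I would build a non-degenerate simplicial resolution $\widetilde{\Sigma}^D \to \Sigma^D$ by attaching a $(k-1)$-simplex over each configuration of $k$ distinct coincidence points; the induced filtration, combined with Alexander duality in the real $2(d_1+\cdots+d_r)$-dimensional ambient $\P^D$, produces a spectral sequence converging to $\widetilde{H}^*(\Hol_D^*(S^2,\XS))$.

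\textbf{Comparison of spectral sequences.} I would run a parallel construction for $\Omega^2_D \XS$, modelling the loop space via scanning as a limit of polynomial-style spaces on which the coincidence condition is allowed at arbitrarily many points; the inclusion $i_D$ induces a morphism between the two spectral sequences. The central dimension count is: at filtration level $k$, the stratum on the holomorphic side has real codimension growing like $2k(\rmin(\Sigma)-1)$ in $\P^D$, since each coincidence point imposes a complex-codimension $\geq \rmin(\Sigma)$ condition. This matches the corresponding loop-space stratum provided $k \leq \dmin$, for if $k > \dmin$ some monic $f_i$ would have to vanish identically, which is impossible. Transporting back through Alexander duality and accounting for the $(k-1)$-simplex dimension, the two $E^1$-pages agree in total degrees $\leq (2\rmin(\Sigma) - 3)\dmin - 2 = d(D,\Sigma)$, which yields the asserted homology equivalence through dimension $d(D,\Sigma)$.

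\textbf{Upgrade to homotopy equivalence, and main obstacle.} When $\rmin(\Sigma) \geq 3$, the real codimension $\geq 2\rmin(\Sigma) \geq 6$ of the primitive-collection loci in $\P^D$ allows a general-position argument to conclude $\pi_1(\Hol_D^*(S^2, \XS)) = 0$. Condition~(\ref{equ: homogenous}.1) forces $\XS$ to be simply connected, and hence so is $\Omega^2_D \XS$; Whitehead's theorem then upgrades the homology equivalence to a homotopy equivalence through the same dimension. When $\rmin(\Sigma) = 2$ the codimension is only $2$ and this argument fails, so only the homology statement survives. The main obstacle I anticipate is the stratum-by-stratum matching in the comparison of spectral sequences: the regions where several primitive collections of $\KS$ overlap and share coincidence points require a careful analysis of how the combinatorics of $I(\KS)$ interact with the combinatorics of the Vassiliev resolution, and it is precisely this step that is responsible for the sharp factor $2\rmin(\Sigma) - 3$ in the stability range.
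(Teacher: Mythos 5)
Your overall strategy is the paper's: resolve the discriminant $\Sigma_D=\P^D\setminus\Hol_D^*(S^2,\XS)$ by a non-degenerate simplicial resolution, truncate after the $\dmin$-th term (this is exactly where the Vandermonde/independence argument of Lemma \ref{lemma: vector bundle*} breaks down), pass through Alexander duality, and extract the bound $(2\rmin(\Sigma)-3)\dmin-2$ from the dimension of the cone stratum added at level $\dmin+1$. One organizational difference: the paper never builds a Vassiliev-type spectral sequence for $\Omega^2_D\XS$ itself. Instead it compares the truncated spectral sequences for $\Hol_D^*$ and $\Hol_{D+\textbf{\textit{a}}}^*$ via the stabilization map (Theorem \ref{thm: III}), and the loop space enters only through the separately proved stable result (Theorem \ref{thm: II}, via scanning, Lemma \ref{lmm: E-infty} and Proposition \ref{prp: Segal fibration}) identifying $\lim_k\Hol^*_{D+k\textbf{\textit{a}}}$ with $\Omega^2_0\XS$. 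Your phrase ``modelling the loop space as a limit of polynomial-style spaces'' is presumably intended to mean the same thing, but as written it hides the fact that a finite-degree-to-finite-degree comparison plus a limiting argument is what actually makes the comparison of $E^1$-pages legitimate; a direct ``spectral sequence for $\Omega^2_D\XS$'' is not constructed anywhere and would need its own justification.

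There is a genuine error in your upgrade to a homotopy equivalence: you assert that simple connectivity of $\XS$ implies simple connectivity of $\Omega^2_D\XS$. This is false, since $\pi_1(\Omega^2 X)\cong\pi_3(X)$ (e.g.\ $\Omega^2 S^3$ has fundamental group $\Z$). What is actually needed, and what the paper proves in Lemma \ref{lmm: connectedness}, is that $\Omega^2_D\XS\simeq\Omega^2 U(\KS)$ and that $U(\KS)\simeq\mathcal{Z}_{\KS}$ is $2(\rmin(\Sigma)-1)$-connected, so that $\Omega^2_D\XS$ is $2(\rmin(\Sigma)-2)$-connected, hence simply connected precisely when $\rmin(\Sigma)\geq 3$. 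This connectivity input is a real ingredient you are missing. On the source side, your general-position argument for $\pi_1(\Hol_D^*(S^2,\XS))=0$ is a legitimate alternative to the paper's Lemma \ref{lmm: 1-connected} (which instead shows $\pi_1$ is abelian via string representations and then kills $H_1$ by the homology equivalence), but note the correct codimension of the discriminant in $\P^D$ is $2\rmin(\Sigma)-2$, not $2\rmin(\Sigma)$: the locus $L_\sigma$ has real codimension $2|\sigma|\geq 2\rmin(\Sigma)$ in $\C^r$, but sweeping over the coincidence point $\alpha\in\C$ gives back two real dimensions. For $\rmin(\Sigma)\geq 3$ this is still $\geq 4$, so your conclusion survives, but the count as stated is off.
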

%%%%%%%%(End od Theorem 1.9)%%

%%%(Remark 1.10)%%
\begin{rmk}
%%%
{\rm
(i)
If $\XS$ is compact,  we know that
the  map $i_D$ is a homology equivalence through dimension
$d(D,\Sigma)$ by the result of
Mostovoy-Villanueva \cite{MV}. 
However, their argument is based on the Stone-Weierstrass theorem which, as mentioned above, requires $\XS$ to be compact (or at least to possess a complete metric). 
\par
(ii)
If $\rmin (\Sigma)\geq 3$,  Theorem \ref{thm: I} states
that the map $i_D$ is a homotopy equivalence through
the dimension $d(D,\Sigma)$. So the assertion of Theorem \ref{thm: I}
is stronger than that of \cite{MV} even if $\XS$ is compact (for $m=1$).
Moreover,
we conjecture that the map $i_D$ is  a homotopy equivalence through the same dimension
even when $\rmin (\Sigma)=2$.
Although we cannot prove this, there are several reasons which support this conjecture
 (for example, see (ii) of Corollary \ref{crl: pi1}). In fact, the conjecture is known to hold for certain non-compact toric varieties $X_n$
(\cite[Theorem 1.6]{GKY1}).
\qed
}
\end{rmk}
%%%%(End of Remark 1.10)%%
%
%%(Corollary 1.11)%%
\begin{crl}\label{crl: I}
%%%%%%%%%%%%%%%%%
Let $\XS$ be a compact smooth toric variety and let
$D=(d_1,\cdots ,d_r)\in (\Z_{\geq 1})^r$
be an $r$-tuple of positive integers satisfying the condition
(\ref{equ: homogenous}.2).
Let $\Sigma (1)$ denote the set of all one dimensional cones in $\Sigma$, and $\Sigma_1$ any fan in $\R^n$ such that
%%%%()%%
%\begin{equation}\label{eq: Sigma}
%%%%%
$\Sigma (1)\subset \Sigma_1\subsetneqq \Sigma.$
%\end{equation}
%%%%%%%
Then $X_{\Sigma_1}$ is a non-compact smooth toric subvariety of $\XS$ and
the inclusion map
$$
i_D:\Hol_D^*(S^2,X_{\Sigma_1})\to \Omega^2_DX_{\Sigma_1}
$$ is a
homotopy equivalence through the dimension $d(D,\Sigma_1)$
if $\rmin (\Sigma_1)\geq 3$
and a homology equivalence through dimension 
$d(D,\Sigma_1 )=d_{min}-2$ if $\rmin (\Sigma_1)=2$.
%%%%%%%%%
\end{crl}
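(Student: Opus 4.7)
The plan is to deduce this corollary as a direct application of Theorem~\ref{thm: I} to the fan $\Sigma_1$, once we verify that $\Sigma_1$ satisfies the two conditions $(\ref{equ: homogenous}.1)$ and $(\ref{equ: homogenous}.2)$, and that $X_{\Sigma_1}$ is indeed a non-compact smooth toric subvariety of $\XS$. There is no genuinely new geometric content beyond the theorem itself; the argument is essentially bookkeeping.

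First I would observe that, since $\Sigma (1)\subset \Sigma_1\subsetneqq \Sigma$, the set of one-dimensional cones of $\Sigma_1$ coincides exactly with $\Sigma (1)=\{\rho_1,\ldots ,\rho_r\}$, and so $\Sigma_1$ has the same primitive generators $\textit{\textbf{n}}_1,\ldots ,\textit{\textbf{n}}_r$ as $\Sigma$. Because $\XS$ is compact and smooth, Remark~\ref{rmk: assumption}(i) yields $\sum_{k=1}^r \Z\cdot \textit{\textbf{n}}_k=\Z^n$, so condition $(\ref{equ: homogenous}.1)$ holds for $\Sigma_1$ as well. The given $r$-tuple $D=(d_1,\ldots ,d_r)\in (\Z_{\geq 1})^r$ satisfies $\sum_{k=1}^r d_k\textit{\textbf{n}}_k={\bf 0}$ by hypothesis, and this is exactly condition $(\ref{equ: homogenous}.2)$ for $\Sigma_1$.

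Next I would check that $X_{\Sigma_1}$ is a non-compact smooth toric subvariety of $\XS$. Smoothness is immediate: every cone of $\Sigma_1$ is a cone of the smooth fan $\Sigma$, hence its primitive generators extend to part of a $\Z$-basis of $\Z^n$. The inclusion of fans $\Sigma_1\subset \Sigma$ induces a canonical open immersion $X_{\Sigma_1}\hookrightarrow \XS$ by the standard theory of toric morphisms (see \cite{CLS}). For non-compactness, compactness of $\XS$ is equivalent to $|\Sigma|=\R^n$, and because $\Sigma_1\subsetneqq \Sigma$ is closed under taking faces, it must omit at least one maximal ($n$-dimensional) cone of $\Sigma$; the relative interior of such a cone fails to lie in $|\Sigma_1|$, so $\Sigma_1$ is not complete and $X_{\Sigma_1}$ is non-compact.

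Finally, having verified all the hypotheses, I would apply Theorem~\ref{thm: I} directly to $X_{\Sigma_1}$ with the same degree $D$: this gives that $i_D:\Hol_D^*(S^2,X_{\Sigma_1})\to \Omega^2_DX_{\Sigma_1}$ is a homotopy equivalence through dimension $d(D,\Sigma_1)$ when $\rmin (\Sigma_1)\geq 3$ and a homology equivalence through the same dimension $d(D,\Sigma_1)=d_{\rm min}-2$ when $\rmin (\Sigma_1)=2$. The only mildly nontrivial ingredient is the classical open-immersion fact for subfans; everything else is routine verification, so no serious obstacle is anticipated.
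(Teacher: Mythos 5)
Your proposal is correct and follows essentially the same route as the paper: verify that $\Sigma_1(1)=\Sigma(1)$ so that conditions (\ref{equ: homogenous}.1) and (\ref{equ: homogenous}.2) transfer from $\Sigma$ to $\Sigma_1$, check smoothness and non-compactness of $X_{\Sigma_1}$ via Lemma \ref{lmm: toric}, and then apply Theorem \ref{thm: I}. Your justification of non-compactness (a strictly smaller subfan closed under faces must omit a maximal cone, whose relative interior then lies outside $|\Sigma_1|$) is in fact slightly more explicit than the paper's one-line appeal to Lemma \ref{lmm: toric}(ii).
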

%%%%%%%%%%%%(End of Corollary 1.11)%%%%

Since the case $\XS=\CP^n$ of Corollary \ref{crl: I}
was treated in \cite{KY6}, we shall consider
the case that $\XS$ is the Hirzerbruch surface $H(k)$. 

%%%%(Example 1.12)%%
\begin{example}\label{dfn: Hirzerbruch}
%%%%%%%%%%%%%%%%%%
{\rm
For an integer $k\in \Z$, let
$H(k)$ be {\it the Hirzerbruch surface} defined by
%()%%
\begin{equation*}
H(k)=\{([x_0:x_1:x_2],[y_1:y_2])\in\CP^2\times \CP^1:x_1y_1^k=x_2y_2^y\}
\subset \CP^2\times \CP^1.
\end{equation*}
%%%
Since there are isomorphisms
$H(-k)\cong H(k)$ for $k \not=0$ and $H(0)\cong\CP^2\times \CP^1$,
without loss of generality we can assume that $k\geq 1$.
Let $\Sigma_k$ denote the fan
 in $\R^2$ given by
%%%%%%%
$$
\Sigma_k=
\big\{\mbox{Cone}({\bf n}_i,{\bf n}_{i+1})\ (1\leq i\leq 3),
\mbox{Cone}({\bf n}_4,{\bf n}_1),
\mbox{Cone}(\textit{\textbf{n}}_j)\ (1\leq j\leq 4),\ \{{\bf 0}\}\big\},
$$
where we set
$
\textit{\textbf{n}}_1%=\textit{\textbf{e}}_1
=(1,0),\  
\textit{\textbf{n}}_2%=\textit{\textbf{e}}_2
=(0,1),\  
\textit{\textbf{n}}_3%=-\textit{\textbf{e}}_1+k\textit{\textbf{e}}_2
=(-1,k), 
\ 
\textit{\textbf{n}}_4%=-\textit{\textbf{e}}_2
=(0,-1).
$
\par
It is easy to see that $\Sigma_k$ is the fan of $H(k)$
and that $\Sigma_k(1)=\{\mbox{Cone}(\textit{\textbf{n}}_i):1\leq i\leq 4\}$.
Since $\{\textit{\textbf{n}}_1,\textit{\textbf{n}}_3\}$ and
$\{\textit{\textbf{n}}_2,\textit{\textbf{n}}_4\}$ are the 
only primitive
collections, $\rmin (\Sigma_k)=2$.
%%%
%%
%\par
%%
Moreover, for a $4$-tuple $D=(d_1,d_2,d_3,d_4)\in (\Z_{\geq 1})^4$
of positive integers,
the equality
$\sum_{k=1}^4d_k\textit{\textbf{n}}_k=\textbf{0}$ holds 
if and only if
$(d_1,d_2,d_3,d_4)=(d_1,d_2,d_1,kd_1+d_2)$, and
$
\dmin =\min \{d_1,d_2,d_3,d_4\}
=\min\{d_1,d_2\}.
$
Hence, by Corollary \ref{crl: I}, we have the following:
%%%%%%%%%%%%%%%%%%%%%%%%
}
\end{example}
%%(End of Example 1.12)%%
%%

%%%%(Corollary 1.13)%%%
\begin{crl}\label{example: H(k)}
%%%%%%%%%%%%%%%%%%%%
Let $k\geq 1$ be a positive integer and 
$\Sigma$ be a fan in $\R^2$ such that
$
\Sigma_k(1)=\{\mbox{\rm Cone}(\textit{\textbf{n}}_i):1\leq i\leq 4\}
\subset \Sigma \subsetneqq \Sigma_k$
as in Example \ref{dfn: Hirzerbruch}.
Then 
$\XS$ is a non-compact open smooth subvariety of $H(k)$.
If $D=(d_1,d_2.d_1,kd_1+d_2)\in (\Z_{\geq 1})^4$,
the inclusion map
$$
i_D:\Hol_D^*(S^2,\XS)\to \Omega^2_D\XS
$$ is a homology equivalence
through dimension $\min \{d_1,d_2\}-2$.
\qed
%%%%
\end{crl}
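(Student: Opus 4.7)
The plan is to deduce this corollary from Corollary \ref{crl: I} applied to the compact smooth toric variety $H(k)$ with fan $\Sigma_k$, taking $\Sigma_1 = \Sigma$. First, one should verify the hypotheses of that corollary. The fan $\Sigma_k$ given in Example \ref{dfn: Hirzerbruch} is the fan of the compact smooth toric variety $H(k)$, and by assumption $\Sigma_k(1)\subset\Sigma\subsetneqq\Sigma_k$, so the hypothesis of Corollary \ref{crl: I} holds immediately. Since $\Sigma$ and $\Sigma_k$ share the same set of one-dimensional cones, the primitive generators of $\Sigma$ are $\textit{\textbf{n}}_1=(1,0)$ and $\textit{\textbf{n}}_2=(0,1)$ together with $\textit{\textbf{n}}_3=(-1,k)$ and $\textit{\textbf{n}}_4=(0,-1)$, and the first two already span $\Z^2$, so condition $(\ref{equ: homogenous}.1)$ is satisfied. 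A direct computation gives
\begin{equation*}
d_1\textit{\textbf{n}}_1+d_2\textit{\textbf{n}}_2+d_1\textit{\textbf{n}}_3+(kd_1+d_2)\textit{\textbf{n}}_4=(d_1-d_1,\;d_2+kd_1-kd_1-d_2)=\mathbf{0},
\end{equation*}
so condition $(\ref{equ: homogenous}.2)$ is satisfied for $D=(d_1,d_2,d_1,kd_1+d_2)$.

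Next, one should compute the numerical invariants. Since $k\geq 1$ and $d_1,d_2\geq 1$, one has $kd_1+d_2>d_1$ and $kd_1+d_2>d_2$, hence $d_{\rm min}=\min\{d_1,d_2,d_1,kd_1+d_2\}=\min\{d_1,d_2\}$. To determine $\rmin(\Sigma)$, note that since $\Sigma\subsetneqq \Sigma_k$ and $\Sigma(1)=\Sigma_k(1)$, at least one of the two-dimensional cones of $\Sigma_k$ is absent from $\Sigma$; moreover the primitive collections $\{\textit{\textbf{n}}_1,\textit{\textbf{n}}_3\}$ and $\{\textit{\textbf{n}}_2,\textit{\textbf{n}}_4\}$ of $\Sigma_k$ remain primitive collections of $\Sigma$. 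In particular $\rmin(\Sigma)=2$, and therefore
\begin{equation*}
d(D,\Sigma)=(2\rmin(\Sigma)-3)d_{\rm min}-2=d_{\rm min}-2=\min\{d_1,d_2\}-2.
\end{equation*}

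Finally, one observes that $X_\Sigma$ is automatically a smooth toric subvariety of $H(k)=X_{\Sigma_k}$ (smoothness is inherited from $H(k)$ because every cone of $\Sigma$ is a cone of $\Sigma_k$), and it is non-compact because $\Sigma\subsetneqq\Sigma_k$ implies that its support is a proper subset of $\R^2$. Applying Corollary \ref{crl: I} with $\Sigma_1=\Sigma$ and using $\rmin(\Sigma)=2$, the inclusion $i_D:\Hol_D^*(S^2,X_\Sigma)\to\Omega_D^2 X_\Sigma$ is a homology equivalence through dimension $d(D,\Sigma)=\min\{d_1,d_2\}-2$, as claimed.

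There is no real obstacle in this argument: it is essentially a bookkeeping exercise, verifying that the general hypotheses of Corollary \ref{crl: I} translate into the explicit numerical data given in Example \ref{dfn: Hirzerbruch}. The only step that requires a small amount of attention is confirming that $\rmin(\Sigma)=2$ for every choice of intermediate fan $\Sigma$, but this is immediate from the observation that the size-two primitive collections of $\Sigma_k$ persist in any sub-fan with the same one-skeleton.
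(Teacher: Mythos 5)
Your proposal is correct and follows essentially the same route as the paper: the paper carries out exactly this computation in Example \ref{dfn: Hirzerbruch} (the primitive collections $\{\textit{\textbf{n}}_1,\textit{\textbf{n}}_3\}$, $\{\textit{\textbf{n}}_2,\textit{\textbf{n}}_4\}$ giving $\rmin=2$, the degree constraint $(d_1,d_2,d_1,kd_1+d_2)$, and $d_{\rm min}=\min\{d_1,d_2\}$) and then cites Corollary \ref{crl: I} with $\Sigma_1=\Sigma$. Your additional check that the size-two primitive collections persist in any intermediate fan with the same one-skeleton is a worthwhile detail the paper leaves implicit.
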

%%(End of Corollary 1.13)%%
%
%%(Remark 1.14)%%
\begin{rmk}
%%%%
{\rm
(i) There are $15$ non isomorphic (as varieties) non-compact subvarieties 
$\XS$ of $H(k)$
which satisfy the assumption of Corollary \ref{example: H(k)}.
\par
(ii)
Note that there is an isomorphism
%%(1.24)%%
\begin{equation}
\pi_2(\XS)\cong \Z^{r-n}
\qquad
(\mbox{see Lemma \ref{lmm: XS} below)}, 
\end{equation}
%%%% 
in general.
%(see Lemma \ref{lmm: XS} below),
So $(r-n)$ of the $r$ positive integers $\{d_k\}_{k=1}^r$ can be 
chosen freely.
For example, in Example \ref{dfn: Hirzerbruch},
$(r,n)=(4,2)$ and
$r-n=4-2=2$. In this case, only two positive integers $d_1$, $d_2$ can be chosen freely
and the other integers $d_3$ and $d_4$ are determined by
uniquely as
$(d_3,d_4)=(d_1,kd_1+d_2).$
\qed
%%%%%%%%
}
\end{rmk}
%%(End of Remark 1.14)%%%

This paper is organized as follows.
In \S \ref{section: scanning map} we introduce the scanning map and
prove the stability theorem for the scanning map
(Theorem \ref{thm: scanning map}).
In \S \ref{section 3}
we recall the basic properties of polyhedral products,
determine the homotopy type of the space 
$E^{\Sigma}(\overline{U},\partial \overline{U})$ 
(Lemma \ref{lmm: E-infty})
and
prove the existence of a Segal-type fibration sequence
(Proposition \ref{prp: Segal fibration}).
In \S \ref{section: stability} we prove our main stability result
(Theorem \ref{thm: II}) by using
(Theorem \ref{thm: scanning map}).
In \S \ref{section: simplicial resolution} we recall
the notion of a simplicial resolution and
in \S \ref{section: spectral sequence}  construct the
(non-degenerate) Vassiliev spectral sequence and its truncated
spectral sequence for computing the homology of $\Hol_D^*(S^2,\XS)$.
Finally in \S \ref{section: proofs}  we prove an unstable stability result
(Theorem \ref{thm: III}) and use it to
prove our main results (Theorem \ref{thm: I}
and Corollary \ref{crl: I}).

%%%(SECTION 2)%%%%
%%%%%%%%%%%%%%%%%%%%%%%%%%%%%
%%%(SECTION 2)%%
\section{The scanning map}\label{section: scanning map}
%%%%%%%%%%%%%%%

First, we recall some known results.

%%%
%%%%%(Lemma 2.1)%%%
\begin{lmm}[\cite{BP}; Corollary 6.30, Theorem 6.33,
Theorem 8.9]\label{Lemma: BP}
%%%%%%%%%%%%%%%%%%%
Let $K$ be a simplicial complex on the index set $[r]$.
\par
$\I$
The space $\mathcal{Z}_K$ is $2$-connected, and
there is a fibration sequence
%%(2.1)%%
\begin{equation}
\mathcal{Z}_K \stackrel{}{\longrightarrow} DJ(K)
\stackrel{\subset}{\longrightarrow} (\CP^{\infty})^r.
\end{equation}
\par
$\II$
There is an $(S^1)^r$-equivariant deformation retraction
%%(2.2)%%
\begin{equation}\label{eq: retr}
ret:\mathcal{Z}_K(\C,\C^*)\stackrel{\simeq}{\longrightarrow}
\mathcal{Z}_K.
\qquad
\qed
\end{equation}
\end{lmm}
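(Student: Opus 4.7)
My plan is to treat both statements as consequences of the coordinatewise torus action of $T^r = (S^1)^r$ on the polyhedral product: via the Borel construction for (i), and via a moment-map deformation for (ii).

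For part (i), I would first realize $DJ(K) \simeq ET^r \times_{T^r} \mathcal{Z}_K$ as the Borel construction of the natural $T^r$-action on $\mathcal{Z}_K = \mathcal{Z}_K(D^2, S^1)$. This identification reduces, via the faceted decomposition of the polyhedral product, to the single-coordinate equivalences $ES^1 \times_{S^1} D^2 \simeq \CP^\infty$ and $ES^1 \times_{S^1} S^1 \simeq *$. The associated fiber bundle $\mathcal{Z}_K \to DJ(K) \to BT^r = (\CP^\infty)^r$ is then the required fibration sequence. For 2-connectedness of $\mathcal{Z}_K$, note that $DJ(K)$ is a CW subcomplex of $(\CP^\infty)^r$ containing the full 2-skeleton $\bigvee_{i=1}^r S^2$ (assuming every singleton $\{i\} \in K$, which one may arrange by discarding ghost vertices), so the inclusion induces isomorphisms on $\pi_1$ and $\pi_2$. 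Combined with $\pi_3((\CP^\infty)^r) = 0$, the long exact homotopy sequence of the fibration forces $\pi_j(\mathcal{Z}_K) = 0$ for $j = 0, 1, 2$.

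For part (ii), I would construct the $T^r$-equivariant deformation retraction using the moment map $\mu(z) = (|z_1|^2, \ldots, |z_r|^2)$. The key observation is that for every $z \in \mathcal{Z}_K(\C, \C^*)$ the zero set $\{i : z_i = 0\}$ is automatically a face of $K$, so these coordinates may remain at $0 \in D^2$, while coordinates with $z_i \neq 0$ are to be radially normalized toward $S^1$. Fibers of $\mu$ are $T^r$-orbits in $U(K) = \mathcal{Z}_K(\C, \C^*)$, and the image $\mu(U(K)) \subset \R^r_{\geq 0}$ admits an explicit polyhedral deformation retraction onto $\mu(\mathcal{Z}_K)$, which encodes $K$ cubically. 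This base retraction lifts $T^r$-equivariantly through $\mu$ to the required deformation retraction of $U(K)$ onto $\mathcal{Z}_K$.

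The main obstacle is verifying continuity of the lifted retraction across the face stratification of $\mathcal{Z}_K(\C, \C^*)$, particularly at points where some coordinates vanish while others lie strictly inside or outside the unit circle. The moment-map formulation handles this cleanly: continuity reduces to that of a piecewise-linear retraction in $\R^r_{\geq 0}$, and properness of the $T^r$-action ensures the lift remains continuous upstairs. Equivariance is automatic, since the retraction scales each coordinate $z_i$ by a real factor depending only on the $T^r$-invariants $|z_j|^2$.
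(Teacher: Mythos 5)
Your proposal is correct and is essentially the standard argument from Buchstaber--Panov, which is all the paper itself relies on: the lemma is quoted from \cite{BP} (Corollary 6.30, Theorem 6.33, Theorem 8.9) with no proof supplied, and both your Borel-construction identification $DJ(K)\simeq ET^r\times_{T^r}\mathcal{Z}_K$ and your orbit-space (moment-map) lifting of a piecewise-linear retraction of $U(K)/T^r$ onto $\mathcal{Z}_K/T^r$ are the arguments used there. The one point worth recording is that $2$-connectedness of $\mathcal{Z}_K$ genuinely requires every singleton $\{i\}$ to be a face of $K$ (a ghost vertex forces an $S^1$ factor on $\mathcal{Z}_K$, so it cannot be ``arranged away'' without changing the space); this hypothesis holds automatically for the complexes $\mathcal{K}_{\Sigma}$ to which the lemma is applied in this paper.
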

%%%%(End of Lemma 2.1)%%

%%(Lemma 2.2)%%%%%%
\begin{lmm}[\cite{Pa1}; (6.2) and Proposition 6.7]\label{lmm: principal}
%%%%%%%%%%%%%%%
Let $\XS$ be a smooth toric variety such that the condition 
(\ref{equ: homogenous}.1) holds.
Then
there is an isomorphism 
%%(2.3)%%
\begin{equation}\label{eq: GS-eq}
\GS\cong \T^{r-n}_{\C}=(\C^*)^{r-n},
\end{equation}
%%%%%
and
the group $\GS$ acts on the space $\mathcal{Z}_{\mathcal{K}_{\Sigma}}(\C,\C^*)$
freely and there is a principal
$\GS$-bundle
%%%
%%%(2.4)%%
\begin{equation}\label{eq: principal}
%%%%%%%%%
q_{\Sigma}:
U(\mathcal{K}_{\Sigma})=\mathcal{Z}_{\mathcal{K}_{\Sigma}}(\C,\C^*)\to\XS.
\qquad
\qed
%%%%%%%%%%%%%%
\end{equation}
%%%%%%%%%%%%%%}
%%%%%
\end{lmm}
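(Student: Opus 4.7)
The plan is to handle the three claims in order, taking the smoothness of $\XS$ to mean, as usual, that the primitive generators of every cone of $\Sigma$ form part of a $\Z$-basis of $\Z^n$.

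For the isomorphism $\GS\cong\T^{r-n}_{\C}$, I would observe that condition (\ref{equ: homogenous}.1) is exactly the surjectivity of the homomorphism $\phi:\Z^r\to\Z^n$ defined by $\phi(a_1,\dots,a_r)=\sum_k a_k\textit{\textbf{n}}_k$. Since $\Z^n$ is free, the short exact sequence $0\to N\to\Z^r\to\Z^n\to 0$ splits with $N\cong\Z^{r-n}$, and applying the (exact) functor $\mbox{Hom}_{\Z}(-,\C^*)$ produces an exact sequence $1\to\mbox{Hom}_{\Z}(N,\C^*)\to(\C^*)^r\to(\C^*)^n\to 1$ whose middle map sends $(\mu_k)$ to $(\prod_k\mu_k^{\langle\textit{\textbf{n}}_k,\textit{\textbf{e}}_j\rangle})_{j=1}^n$. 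A direct comparison with the defining relations of $\GS$ identifies $\GS$ with the kernel of this middle map, whence $\GS\cong\mbox{Hom}_{\Z}(N,\C^*)\cong(\C^*)^{r-n}$.

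Next, for freeness, fix $x=(x_1,\dots,x_r)\in U(\KS)=\mathcal{Z}_{\KS}(\C,\C^*)$ and set $\sigma(x):=\{k\in[r]:x_k=0\}$. The polyhedral product description forces $\sigma(x)\in\KS$, so $\{\textit{\textbf{n}}_k\}_{k\in\sigma(x)}$ spans a cone of $\Sigma$ and, by smoothness, extends to a $\Z$-basis of $\Z^n$. The stabilizer of $x$ in $\T^r_{\C}$ is $T_{\sigma(x)}=\{(\mu_k):\mu_k=1\text{ for all }k\notin\sigma(x)\}$. Given $(\mu_k)\in\GS\cap T_{\sigma(x)}$ and any $k_0\in\sigma(x)$, the basis property lets me pick $\textit{\textbf{m}}\in\Z^n$ with $\langle\textit{\textbf{n}}_k,\textit{\textbf{m}}\rangle=\delta_{k,k_0}$ for every $k\in\sigma(x)$; substituting this $\textit{\textbf{m}}$ into the defining relation of $\GS$ collapses the product to $\mu_{k_0}=1$, so the stabilizer is trivial.

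Finally, to upgrade this free holomorphic action of the complex Lie group $\GS\cong(\C^*)^{r-n}$ on the complex manifold $U(\KS)$ to a principal bundle structure on $q_{\Sigma}:U(\KS)\to\XS$, I would verify properness (every $\GS$-orbit is closed in $U(\KS)$, which is standard for this kind of algebraic torus action) and invoke the slice theorem for free proper Lie group actions to obtain local holomorphic trivializations; the identification $U(\KS)/\GS=\XS$ then follows from Theorem \ref{prp: Cox}, and the dimension count $r=(r-n)+n$ is consistent. The main obstacle is the freeness step, where the lattice definition of $\GS$ must be combined with the smoothness hypothesis through the explicit construction of dual vectors $\textit{\textbf{m}}$; once freeness and properness are established, the principal bundle conclusion is routine Lie-theoretic machinery.
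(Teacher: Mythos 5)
The paper gives no proof of this lemma at all: it is imported verbatim from Panov [Pa1, (6.2) and Proposition 6.7], so your argument has to be judged against the standard one in that source, which its first two steps essentially reproduce. The identification of $\GS$ and the freeness argument are correct and complete: condition (\ref{equ: homogenous}.1) is surjectivity of $\phi:\Z^r\to\Z^n$, the defining relations of $\GS$ exhibit it as the kernel of the induced map $(\C^*)^r\to(\C^*)^n$, and the dual-vector computation on $\sigma(x)\in\KS$ is exactly the right use of smoothness. (One small slip: the sequence $1\to\GS\to\T^r_{\C}\to\T^n_{\C}\to 1$ with $\GS$ on the kernel side comes from applying the covariant functor $-\otimes_{\Z}\C^*$ to $0\to N\to\Z^r\to\Z^n\to 0$, not the contravariant $\mathrm{Hom}_{\Z}(-,\C^*)$, which would place $(\C^*)^n$ on the subgroup side; since everything is split free this does not affect the conclusion $\GS\cong N\otimes_{\Z}\C^*\cong(\C^*)^{r-n}$.)

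The genuine gap is in the final step. \lq\lq Every orbit is closed\rq\rq\ does not imply that the action is proper: the free $\C^*$-action on $\C^2\setminus\{{\bf 0}\}$ given by $t\cdot(x,y)=(tx,t^{-1}y)$ has all orbits closed, yet it is not proper and its orbit space is not Hausdorff. Properness is precisely where the structure of the fan beyond the smoothness of individual cones must enter, so it cannot be dismissed as standard without argument. The usual repair --- and in substance what Panov does --- bypasses properness and the slice theorem entirely: cover $U(\KS)$ by the $\GS$-invariant open sets $V_\sigma=(\underline{\C},\underline{\C^*})^{\sigma}\cong\C^{\sigma}\times(\C^*)^{[r]\setminus\sigma}$ for $\sigma\in\KS$. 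Since $\{\textbf{\textit{n}}_k\}_{k\in\sigma}$ extends to a $\Z$-basis, the quotient $\Z^n/\sum_{k\in\sigma}\Z\cdot\textbf{\textit{n}}_k$ is free, so the injection $N\to\Z^{[r]\setminus\sigma}$ splits and hence the (injective, by your freeness computation) projection $\GS\to(\C^*)^{[r]\setminus\sigma}$ maps $\GS$ onto a direct factor subtorus. This yields an equivariant homeomorphism $V_\sigma\cong\GS\times(V_\sigma/\GS)$, which is exactly the required local trivialization; properness then follows a posteriori. If you prefer to keep the slice-theorem route, properness itself needs an honest proof rather than the closed-orbit remark.
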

%%%(End of Lemma 2.2)%%

%%%(Lemma 2.3)%%%
\begin{lmm}\label{lmm: XS}
%%%%%%%%%%%%%%%%%
If the condition (\ref{equ: homogenous}.1) is
satisfied, 
the space $\XS$ is simply connected and $\pi_2(\XS)=\Z^{r-n}$. 
%%%
\end{lmm}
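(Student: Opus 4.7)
The plan is to exploit the principal $G_\Sigma$-bundle from Lemma \ref{lmm: principal} together with the connectivity result of Lemma \ref{Lemma: BP}. Under condition $(\ref{equ: homogenous}.1)$, Lemma \ref{lmm: principal} provides a principal bundle
\begin{equation*}
G_\Sigma \longrightarrow U(\KS) \stackrel{q_\Sigma}{\longrightarrow} \XS,
\end{equation*}
with $G_\Sigma \cong (\C^*)^{r-n}$. I would analyze the associated long exact sequence in homotopy, using what we know about the total space and the fiber.

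First I would establish the relevant connectivity of $U(\KS)$. By Lemma \ref{Lemma: BP} $\II$, the retraction $\mathcal{Z}_{\KS}(\C,\C^*) \stackrel{\simeq}{\to} \mathcal{Z}_{\KS}$ is a homotopy equivalence, and by Lemma \ref{Lemma: BP} $\I$ the moment-angle complex $\mathcal{Z}_{\KS}$ is $2$-connected. Combined with the identification $U(\KS) = \mathcal{Z}_{\KS}(\C,\C^*)$ from $(\ref{equ: UK})$, this gives $\pi_k(U(\KS)) = 0$ for $k \le 2$.

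Next I would read off the homotopy groups of $G_\Sigma$. From $G_\Sigma \cong (\C^*)^{r-n}$ there is a homotopy equivalence $G_\Sigma \simeq (S^1)^{r-n}$, so $\pi_0(G_\Sigma) = 0$, $\pi_1(G_\Sigma) \cong \Z^{r-n}$, and $\pi_k(G_\Sigma) = 0$ for $k \ge 2$. Now I would apply the long exact sequence in homotopy for the fibration $q_\Sigma$. At dimension one,
\begin{equation*}
\pi_1(U(\KS)) \longrightarrow \pi_1(\XS) \longrightarrow \pi_0(G_\Sigma)
\end{equation*}
reduces to $0 \to \pi_1(\XS) \to 0$, proving that $\XS$ is simply connected. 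At dimension two,
\begin{equation*}
\pi_2(G_\Sigma) \longrightarrow \pi_2(U(\KS)) \longrightarrow \pi_2(\XS) \longrightarrow \pi_1(G_\Sigma) \longrightarrow \pi_1(U(\KS))
\end{equation*}
becomes $0 \to 0 \to \pi_2(\XS) \to \Z^{r-n} \to 0$, yielding the claimed isomorphism $\pi_2(\XS) \cong \Z^{r-n}$.

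There is no real obstacle here; the only point needing care is to confirm that condition $(\ref{equ: homogenous}.1)$ is strong enough to invoke Lemma \ref{lmm: principal} (in particular the freeness of the $G_\Sigma$-action and the identification $G_\Sigma \cong (\C^*)^{r-n}$) and that the retraction of Lemma \ref{Lemma: BP} $\II$ indeed preserves the $2$-connectivity when we pass from $\mathcal{Z}_{\KS}$ to $U(\KS)=\mathcal{Z}_{\KS}(\C,\C^*)$, which is immediate since it is a genuine homotopy equivalence.
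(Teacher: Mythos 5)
Your argument is correct, and for $\pi_2$ it is exactly the paper's: both use the $2$-connectivity of $U(\KS)=\mathcal{Z}_{\KS}(\C,\C^*)$ (Lemma \ref{Lemma: BP} plus the retraction (\ref{eq: retr})) and the homotopy exact sequence of the principal $\GS$-bundle of Lemma \ref{lmm: principal} with $\GS\cong(\C^*)^{r-n}$. The one place you diverge is simple connectivity: the paper obtains $\pi_1(\XS)=0$ by citing the general description of $\pi_1$ of a toric variety (\cite[Theorem 12.1.10]{CLS}), for which condition (\ref{equ: homogenous}.1) is precisely the hypothesis, whereas you read it off from the tail $\pi_1(U(\KS))\to\pi_1(\XS)\to\pi_0(\GS)$ of the same exact sequence. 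Your route is more self-contained and unifies the two computations, but note that it leans on the principal-bundle structure of $q_\Sigma$, which in Lemma \ref{lmm: principal} is stated for \emph{smooth} $\XS$; this is a standing assumption of the paper, so your proof is valid in context, but the paper's citation of \cite{CLS} establishes simple connectivity for arbitrary toric varieties satisfying (\ref{equ: homogenous}.1). One small point you implicitly use and could make explicit: $\KS$ has no ghost vertices (every $\rho_k$ lies in $\Sigma$), which is what makes the $2$-connectivity statement of Lemma \ref{Lemma: BP} applicable.
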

%%%%%%%%%%%%%%%%%
\begin{proof}
%%%
By  (\ref{equ: homogenous}.1) and \cite[Theorem 12.1.10]{CLS}
we easily see that
the space $\XS$ is simply connected.
%%%
By Lemma \ref{Lemma: BP} and (\ref{eq: retr}),  $\mathcal{Z}_{\KS}(\C,\C^*)$ is $2$-connected.
Thus,
by using the homotopy exact sequence of the 
principal $\GS$-bundle (\ref{eq: principal}) and
(\ref{eq: GS-eq}), we   
see that  $\pi_2(\XS)= \Z^{r-n}$.
%%%%%%%
\end{proof}
%%%%%%%%%(End of Proof of Lemma 2.3)%%

Recall the basic facts concerning the relation between a fan and a toric variety.
%%%(Definition 2.4)%%
\begin{dfn}
%%%%%%%
{\rm
Let $\Sigma$ be a fan in $\R^n$.
Then a cone $\sigma\in \Sigma$ is called {\it smooth} if it is generated by
a subset of a basis of $\Z^n$.
%Let $\vert\Sigma\vert$ denote the support of $\Sigma$ defined by
%the subset
%$\vert\Sigma\vert =\bigcup_{\sigma\in\Sigma}\sigma$.
%%
}
%%%%%
\end{dfn}
%%(End of Definition 2.4)%%%

%%%(Lemma 2.5)%%
\begin{lmm}[\cite{CLS}]\label{lmm: toric}
%%%%%%%%%%%%%%%%
Let $\XS$ be a toric variety determined by a fan $\Sigma$ in $\R^n$.
\begin{enumerate}
\item[$\I$]
$\XS$ is a smooth if and only if every cone $\sigma\in\Sigma$ is smooth.
\item[$\II$]
$\XS$ is compact if and only if $\R^n=\bigcup_{\sigma\in \Sigma}\sigma$.
\qed
\end{enumerate}
%%%%%
\end{lmm}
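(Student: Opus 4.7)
The plan is to reduce both statements to properties of the affine toric charts $U_\sigma = \mathrm{Spec}\,\mathbb{C}[\sigma^\vee \cap M]$ (where $M = \mathbb{Z}^n$ and $\sigma^\vee$ is the dual cone), which form a Zariski cover of $X_\Sigma$ indexed by the cones $\sigma \in \Sigma$. Since both smoothness and completeness are checkable locally (the former on Zariski charts, the latter via gluing of one-parameter subgroups), this reduction is the natural entry point.

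For (i), the key observation I would use is that $U_\sigma$ is smooth if and only if $\sigma$ is smooth in the sense defined above. The forward direction, which is the easy one, comes from the fact that if $\sigma$ is generated by a subset $\{\textit{\textbf{n}}_{i_1},\dots,\textit{\textbf{n}}_{i_s}\}$ of a $\mathbb{Z}$-basis of $\mathbb{Z}^n$, then $\sigma^\vee \cap M$ is a free commutative monoid on $n$ generators, so $U_\sigma \cong \mathbb{C}^s \times (\mathbb{C}^*)^{n-s}$ is smooth. The converse requires showing that non-smoothness of $\sigma$ forces a singularity, which one sees by localizing at the distinguished point $x_\sigma$ and computing that the Zariski tangent space has dimension strictly greater than $n$ (equivalently, the minimal set of semigroup generators has more than $n$ elements). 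Then smoothness of $X_\Sigma$ is equivalent to smoothness of every $U_\sigma$, and since every cone in $\Sigma$ arises as a face (hence as an open subset of the closure) of some maximal cone, it suffices to check maximal cones.

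For (ii), I would invoke the valuative criterion of properness applied to the affine open immersions $U_\sigma \hookrightarrow X_\Sigma$. Concretely, given a one-parameter subgroup $\lambda_{\textit{\textbf{v}}}: \mathbb{C}^* \to \T^n_{\mathbb{C}} \subset X_\Sigma$ with $\textit{\textbf{v}} \in \mathbb{Z}^n$, the limit $\lim_{t \to 0} \lambda_{\textit{\textbf{v}}}(t)$ exists in $X_\Sigma$ if and only if $\textit{\textbf{v}}$ lies in some cone of $\Sigma$; the limit then lies in the orbit corresponding to the smallest such cone. Density of rational directions in $\mathbb{R}^n$ and the valuative criterion together yield that $X_\Sigma$ is complete (equivalently, compact in the analytic topology) if and only if $\bigcup_{\sigma \in \Sigma}\sigma = \mathbb{R}^n$.

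The main obstacle is not conceptual but bookkeeping: identifying the limit of $\lambda_{\textit{\textbf{v}}}(t)$ via the orbit-cone correspondence, and in the smoothness argument, making precise the equivalence between the minimal generators of $\sigma^\vee \cap M$ and the dimension of the Zariski tangent space at $x_\sigma$. Both steps are carried out in detail in \cite{CLS}, so for our purposes the cleanest route is simply to appeal to the theorems in that reference rather than reproduce the affine-chart analysis, since the lemma is used in this paper only as a structural fact about the cones $\{\textit{\textbf{n}}_k\}_{k=1}^r$ of a smooth toric variety and as the justification for Remark \ref{rmk: assumption}(i).
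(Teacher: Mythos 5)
Your proposal is correct and matches the paper's treatment: the paper states this lemma with no proof at all, simply citing \cite{CLS}, and your sketch is precisely the standard argument given there (smoothness checked on the affine charts $U_\sigma$, compactness via limits of one-parameter subgroups and the valuative criterion). Your concluding decision to appeal to \cite{CLS} rather than reproduce the affine-chart analysis is exactly what the authors do.
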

%%%%%%%(End of Lemma 2.5)%%%
%

Now, we consider  configuration spaces the scanning map.

%%(Definition 2.6)%%%
\begin{dfn}
%%%%%%%%%%%%%%%%%%%%%
{\rm
 For a positive integer $d\geq 1$ and a based space $X$, let
 $\SP^d(X)$ denote {\it the $d$-th symmetric product} of $X$ defined by
the orbit space 
%%(2.5)%%
\begin{equation}
\SP^d(X)=X^d/S_d,
\end{equation}
where the symmetric group $S_d$ of $d$ letters acts on the $d$-fold
 product $X^d$ in the natural manner.
 }
 %%(End of Definition 2.6)%%%
\end{dfn}
%%%%%%%%%%%%%%%%%%%%%

%%(Remark 2.7)%%
\begin{rmk}
{\rm
An element $\eta\in \SP^d(X)$ may be identified with a formal linear
combination
$\eta =\sum_{k=1}^sn_kx_k$, where
$x_1,\cdots ,x_s$ are distinct points in $X$
and $n_1,\cdots ,n_s$ are positive integers such that
$\sum_{k=1}^sn_k=d$.
\qed
}
%%%%%%%%%%%%
\end{rmk}
%%%(End of Remark 2.7)%%%

%%(Definition 2.8)%%%
\begin{dfn}
%%%%%%%%%%%%%%%%%%%%%
{\rm
(i)
When $A\subset X$ is a closed subspace, define the equivalent relation
\lq\lq$\sim$\rq\rq \ on $\SP^d(X)$ by
%%(2.6)%%
\begin{equation}
%%%%%%%%
\eta_1\sim \eta_2\quad
\mbox{if }\quad
\eta_1 \cap (X\setminus A)=\eta_2 \cap (X\setminus A)
\quad
\mbox{for }\eta_1,\eta_2\in \SP^d(X).
\end{equation}
%%%
Define the space $\SP^d(X,A)$ by the quotient space
%%(2.7)%%
\begin{equation}
%%%%
\SP^d(X,A)=\SP^d(X)/\sim .
\end{equation}
%%%%%%%%
Note that the points in $A$ are ignored in $\SP^d(X,A)$.
If $A\not=\emptyset$, we have the natural inclusion 
$\SP^d(X,A)\subset \SP^{d+1}(X,A)$
by adding a point in $A$, and one can define the space
$\SP^{\infty}(X,A)$ by the union
%%(2.8)%%
\begin{equation}
\SP^{\infty}(X,A)=\bigcup_{d=1}^{\infty}\SP^d(X,A).
\end{equation}
%%(ii)%%%%%
\par
%%%%%%%%%
(ii)
For each $D=(d_1,\cdots ,d_r)\in (\Z_{\geq 1})^r$, let
$E_D^{\Sigma}(X)$ denote the space 
%%(2.9)%%
\begin{equation}
E_D^{\Sigma}(X)=
\{(\xi_1,\cdots ,\xi_r)
\in
\prod_{i=1}^r\SP^{d_i}(X)
:
\bigcap_{\i\in\sigma}\xi_i=\emptyset
\mbox{ for any }\sigma\in
I(\mathcal{K}_{\Sigma})\}.
\end{equation}
%%%%%
If $A\subset X$ is a closed subspace and $A\not=\emptyset$, define
the equivalence relation \lq\lq$\sim$\rq\rq \ on
the space $E^{\Sigma}_D(X)$ by
$$
(\xi_1,\cdots ,\xi_r)\sim
(\eta_1,\cdots ,\eta_r)
\quad
\mbox{if }\quad
\xi_i \cap (X\setminus A)=\eta_i \cap (X\setminus A)
\quad
\mbox{for each }1\leq j\leq r.
$$
Define the space
$E^{\Sigma}_D(X,A)$ by the quotient space
%%(2.10)%%
\begin{equation}
E^{\Sigma}_D(X,A)=E^{\Sigma}_D(X)/\sim .
\end{equation}
%%%%
Then
by adding points in $A$, we have the natural inclusion
$$
E^{\Sigma}_{D(k)}(X,A)\subset E^{\Sigma}_{D(k+1)}(X,A)
$$ for
$D(k)=(d_1+k,\cdots ,d_r+k)$ if $k\geq 0$.
So one can also define the space
$E^{\Sigma}_{}(X,A)$ by the union} 
%%(2.11)%%
\begin{equation}
%%%
E^{\Sigma}_{}(X,A)=\bigcup_{k\geq 0}
E^{\Sigma}_{D(k)}(X,A).
\end{equation}
%%%%%%%%%%%
%%%%
\end{dfn}
%%(End of Definition 2.8)%%%
%%
%%(Remark 2.9)%%
\begin{rmk}
%%%%
{\rm
(i)
It is easy to see that the space $E^{\Sigma}(X,A)$ does not depend on the choice of
the $r$-tuple $D=(d_1,\cdots ,d_r)$ and that the following equality holds:
%%()%%%
\begin{equation*}
%%%%%%
E^{\Sigma}_{}(X,A)=\{(\xi_1,\cdots ,\xi_r)\in \SP^{\infty}(X,A)^r:
\cap_{i\in\sigma}\xi_i=\emptyset
\mbox{ for any }\sigma\in
I(\mathcal{K}_{\Sigma})\}.
\end{equation*}
%%%
\par (ii)
Note that there is a natural homeomorphism $\P^d(\C)\cong\SP^d(\C)$
by identifying
$\prod_{k=1}^s(z-\alpha_k)^{n_k}\mapsto \sum_{k=1}^sn_k\alpha_k$,
where
$\{\alpha_k\}_{k=1}^s$ are distinct points in $\C$ and 
$\{n_k\}_{k=1}^s$ are positive integers such that
$\sum_{k=1}^sn_k=d$.
\par
(iii)
If the condition
(\ref{equ: homogenous}.2) is satisfied, by using the above identification
we have a natural homeomorphism
%%%(2.12)%%%
\begin{equation}\label{eq: identify}
%%%%%%%%%
\Hol_D^*(S^2,\XS)\cong E^{\Sigma}_D(\C).
\qed
\end{equation}
%%%%%%
}
\end{rmk}
%%%(End of Remark 2.9)%%

%%%(Definition 2.10)%%%
\begin{dfn}\label{dfn: stabilization etc}
%%%%%%%%
{\rm
Let $\textbf{\textit{a}}=(a_1,\cdots ,a_r)\in (\Z_{\geq 1})^r$ be any fixed
$r$-tuple of positive integers such that
%%(2.13)%%%
\begin{equation}\label{condition: a}
%%%%%%
\sum_{k=1}^ra_r\textbf{\textit{n}}_k=\textbf{0}.
\end{equation}
%%%%%%%
For each $E=(e_1,\cdots ,e_r)\in (\Z_{\geq 1})^r$, let
$U_E\subset \C$ denote te subset
$U_E=\{w\in\C:\mbox{ Re}(w)<e_1+\cdots +e_r\}$, and
choose any $r$  points
$\{x_j\}_{j=1}^r\subset U_{D+\textit{\textbf{a}}}\setminus
U_D$
such that $x_i\not= x_j$ if $i\not= j$.
For any such a choice, we define the stabilization map
$s_{D,\Sigma}:E_D^{\Sigma}(U_D)\to 
E_{D+\textbf{\textit{a}}}^{\Sigma}(U_{D+\textbf{\textit{a}}})$
by
%%%(2.14)%%
\begin{equation}
\begin{CD}
s_{D,\Sigma}:
E_D^{\Sigma}(U_D) @>>> E_{D+\textbf{\textit{a}}}^{\Sigma}(U_{D+\textbf{\textit{a}}})
\\
\ \ (\xi_1,\cdots ,\xi_r)
@>>>
(\xi_1+a_1x_1,\cdots ,\xi_r+a_rx_r)
\end{CD}
\end{equation}
%%%%
We also have the stabilization map
%%(2.15)%%
\begin{equation}\label{eq: sD}
%%%%%%%%
s_D:\Hol_d^*(S^2,\XS)\to \Hol_{D+\textbf{\textit{a}}}^*(S^2,\XS)
\end{equation}
%%%%%%%%
defined as the composite of maps
%%(2.16)%%
\begin{equation}
\Hol_D^*(S^2,\XS)\cong
E_D^{\Sigma}(U_D)\stackrel{s_{D,\Sigma}}{\longrightarrow} 
E_{D+\textbf{\textit{a}}}^{\Sigma}(U_{D+\textbf{\textit{a}}})
\cong
\Hol^*_{D+\textbf{\textit{a}}}(S^2,\XS).
\end{equation}
%%%%%
Note that, although the map $s_D$ depends on the choice of the points
$\{x_k\}_{k=1}^r$,  its homotopy class does not.
%%%%%%
}
%%%%%%
\end{dfn}
%%(End of Definition 2.10)%%%%%

%%%%%%%%%%%%%%%%%%%%%
Now we are ready to define the scanning map.
%%
%%(Definition 2.11)%%
\begin{dfn}
%%%%%%%%%%%%%%%%
{\rm
Let $\epsilon_0>0$ be any fixed sufficiently small number and let
$U=\{w\in \C:\vert w\vert <1\}$.
For each $w\in \C$, let $U_w=\{x\in \C:\vert x-w\vert <\epsilon_0\}$.
%\par
Then
for an element $\eta =(\eta_1,\cdots ,\eta_r)\in E^{\Sigma}_D(\C),$
define a map
%%()%%
$
sc_D(\eta):\C \to 
E^{\Sigma}_{}(\overline{U},\partial \overline{U})
$
by 
$$
w\mapsto
\eta \cap \overline{U}_w=
(\eta_1\cap \overline{U}_w,\cdots ,\eta_r\cap \overline{U}_w)
\in E^{\Sigma}(\overline{U}_w,\partial \overline{U}_w)
\cong
E_{}^{\Sigma}(\overline{U},\partial \overline{U})
$$
for $w\in \C$, where we identify
$(\overline{U}_w,\partial \overline{U}_w)$
with $(\overline{U},\partial \overline{U})$
in the canonical way.
Since $\dis \lim_{w\to\infty}sc(\eta)(w)=(\emptyset,\cdots ,\emptyset),$
it naturally extends to a map
%%%(2.17)%%
\begin{equation}\label{equ: sc}
%%%%%%%%%
sc(\eta):S^2=\C\cup\infty \to E^{\Sigma}_{}(\overline{U},\partial \overline{U})
\end{equation}
%%%%%%%%%%
by taking $sc(\eta)(\infty)=(\emptyset ,\cdots ,\emptyset).$
%%
%%%%%
Now we choose the point $\infty$ and the empty configuration
$(\emptyset,\cdots ,\emptyset)$ as the base-points of 
$S^2=\C \cup \infty$ and 
$E^{\Sigma}_{}(\overline{U},\partial \overline{U})$, respectively. 
Then the map $sc(\eta)$ is a base-point preserving map, and
we obtain a map $sc:E^{\Sigma}_D(\C)\to
\Omega^2E^{\Sigma}_{}(\overline{U},\partial \overline{U})$.
However,
since $E^{\Sigma}_D(\C)$ is connected, the image of the map $sc$ is contained some path-component of $\Omega^2E^{\Sigma}(\overline{U},\partial \overline{U})$,
which we denote by
$\Omega^2_DE^{\Sigma}_{}(\overline{U},\partial \overline{U}).$ 
Thus we have the map
%%%(2.18)%%
\begin{equation}\label{equ: scanning}
sc_D:E^{\Sigma}_D(\C)\to
\Omega^2_D E^{\Sigma}_{}(\overline{U},\partial \overline{U}).
\end{equation}
%%%
%%
If the condition (\ref{equ: homogenous}.2) is satisfied,
we can identify $\Hol_D^*(S^2,\XS)=E^{\Sigma}_D(\C)$ and 
we obtain 
the map
%%(2.19)%%%%%
\begin{equation}\label{equ: scanning map}
%%%%%%%%%%%
sc_D:\Hol_D^*(S^2,\XS)
\to
\Omega^2_D E^{\Sigma}(\overline{U},\partial \overline{U}).
\end{equation}
%%%
\par We refer to this map (and others defined by the same method) as 
\lq\lq\ the scanning map\rq\rq. 
%%%
It is easy to see that there is a commutative diagram
%%(2.20)%%
\begin{equation}\label{CD1}
%%%%%%%%
\begin{CD}
\Hol_D^*(S^2,\XS) @>sc_D>>\Omega^2_D E^{\Sigma}(\overline{U},\partial \overline{U})
\\
@V{s_D}VV   @VV{\simeq}V
\\
\Hol_{D+\textbf{\textit{a}}}^*(S^2,\XS) @>sc_{D+\textbf{\textit{a}}}>>
\Omega^2_{D+\textbf{\textit{a}}} E^{\Sigma}(\overline{U},\partial \overline{U})
\end{CD}
\end{equation}
%%%
Let $\dis \Hol_{D+\infty}^*(S^2,\XS)
=\lim_{k\to\infty}\Hol_{D+k\textbf{\textit{a}}}^*(S^2,\XS)$
denote the colimit constructed from the maps
$s_{D+k\textbf{\textit{a}}}$.
Then by using (\ref{CD1}) we obtained 
{\it the stabilized scanning map}
%%(2.21)%%
\begin{equation}
S:
\Hol_{D+\infty}^*(S^2,\XS)=
\lim_{k\to\infty}\Hol_{D+k\textbf{\textit{a}}}^*(S^2,\XS)
\to
\Omega^2_0 E^{\Sigma}(\overline{U},\partial \overline{U}),
\end{equation}
%%%
where we set $S=\lim_{k\to\infty}sc_{D+k\textbf{\textit{a}}}$ and
$\Omega^2_0 X$
denotes the path component of
$\Omega^2 X$
which contains the constant map.
}
\end{dfn}
%%%%(End of Definition 2.11)%%%

%%%%(Theorem 2.12: scanning map)%%
\begin{thm}\label{thm: scanning map}
%%%%%%%%%%%%%%%%%%%%%%%%%%%%%%%
The stabilized scanning map
$$
S:\Hol_{D+\infty}^*(S^2,\XS)=
\lim_{k\to\infty}\Hol_{D+k\textbf{\textit{a}}}^*(S^2,\XS)
\stackrel{\simeq}{\longrightarrow}
\Omega^2_0 E^{\Sigma}(\overline{U},\partial \overline{U})
$$
is a homotopy equivalence.
\qed
\end{thm}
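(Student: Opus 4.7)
The plan is to adapt the scanning argument of Segal \cite{Se} and McDuff to the setting of the restricted partial configuration spaces $E^{\Sigma}$, and then to combine the result with the identification $\Hol_D^*(S^2,\XS)\cong E^{\Sigma}_D(\C)$ from (\ref{eq: identify}) and the naturality square (\ref{CD1}). By these observations it is enough to prove that the induced map on colimits
\[
\lim_{k\to\infty}E^{\Sigma}_{D+k\textbf{\textit{a}}}(\C)\longrightarrow \Omega^2_0E^{\Sigma}(\overline{U},\partial\overline{U})
\]
is a weak homotopy equivalence.

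First I would extend the scanning construction to a functor from closed convex regions (say rectangles) $R\subset\C$ to suitable configuration spaces, giving local scanning maps $sc_R:E^{\Sigma}(R,\partial R)\to \Map\bigl((R,\partial R),E^{\Sigma}(\overline{U},\partial\overline{U})\bigr)$; the case $R=\C$ (compactified to $S^2$) recovers the scanning map of (\ref{equ: scanning}). The core technical input is a gluing quasifibration: for a rectangle $R=R_1\cup R_2$ split along a common edge $L$, the restriction-to-half map should fit into a quasifibration
\[
E^{\Sigma}(R_2,\partial R_2)\longrightarrow E^{\Sigma}(R,\partial R)\longrightarrow E^{\Sigma}(R_1,\partial R_1),
\]
with the fiber appearing as the configurations that become empty after the projection to $R_1$.

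With this quasifibration in hand, the main step is an induction on the minimal number of small squares of diameter less than $\epsilon_0$ needed to cover $R$. For a single such square $sc_R$ is essentially the identity, up to the canonical identification $(R,\partial R)\cong(\overline{U},\partial\overline{U})$. For the inductive step I would compare, via the five lemma, the long exact sequence in homotopy coming from the above quasifibration on the source with the long exact sequence of the path-loop fibration on the target $\Omega^2E^{\Sigma}(\overline{U},\partial\overline{U})$, obtained by scanning along the two halves of $R$ separately. Taking the colimit of $sc_R$ as $R$ exhausts $\C$ and matching this colimit to $\lim_k E^{\Sigma}_{D+k\textbf{\textit{a}}}(\C)$ (the stabilization map adds, in the $i$-th coordinate, $a_i$ new points which are pushed into newly available space near the right edge of the expanding rectangle) finishes the argument by a direct diagram chase using (\ref{CD1}).

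The principal obstacle will be establishing the quasifibration above in the presence of the polyhedral constraint $\bigcap_{i\in\sigma}\xi_i=\emptyset$ for each $\sigma\in I(\KS)$. In the unrestricted case of symmetric products this is a classical Dold--Thom-type argument, but here one must check that the constraint is compatible with restriction to a half-rectangle and with the formation of the fiber, and, crucially, that a configuration in $R$ whose $R_1$-projection degenerates can be locally perturbed back to a non-degenerate one. The simplicial closure of $\KS$ under subsets and the fact that the constraint is detected by finitely many coordinate incidences should make this go through, but the bookkeeping near the shared edge $L$, where points can appear or disappear under stabilization, will require care.
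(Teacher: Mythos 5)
Your proposal is essentially the same argument the paper relies on: the paper's own proof is a one-line appeal to Segal's scanning method as implemented in Guest's Proposition 4.4 (and \cite{GKY2}), and your outline — local scanning over rectangles, the restriction quasifibration, induction over a cover by small squares with the five lemma, and passage to the colimit matched against the stabilization maps — is precisely the standard implementation of that method, with the one genuinely new point (compatibility of the quasifibration with the constraints indexed by $I(\KS)$) correctly identified as the step requiring verification. Since the paper supplies no more detail than this, your sketch is at least as complete as the paper's own proof.
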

%%%%%%%%%%%%%%%%%%
\begin{proof}
%%%%%
The assertion can be proved
by using Segal's scanning  method given in
\cite[Prop. 4.4]{Gu2} (cf. \cite{Gu1}) and \cite{GKY2}.
%Alternatively, one can use 
%\cite[Theorem 5.8]{Ka2}.
%%%
\end{proof}
%%%%%(End of proof of Theorem 2.12)%%%

%%(End of SECTION 2)%%%%%%
%%
%%
%%
%%
%%
%%
%%(SECTION 3)%%%
\section{Segal-type fibration sequences}
\label{section 3}
%%%%%%%%%%%%%%%%

%%%%%%%%%%%%%%%

In this section we recall the basic fact concerning the topology of
polyhedral products and determine the homotopy type
of the space $E^{\Sigma}(\overline{U},\partial \overline{U})$.
We also consider relation between the topologies of the spaces
$E^{\Sigma}(\overline{U},\partial \overline{U})$ and $\XS$,
and construct some Segal-type fibration sequences
(Proposition \ref{prp: Segal fibration}).

%%(Definition 3.1)%%
\begin{dfn}\label{dfn: veeK}
%%%%
{\rm
Let $(X,*)$ be a based space and let $I$ be 
a collection of some subsets of $[r]=\{1,2,\cdots ,r\}$.
Then let $\vee^IX\subset X^r$ denote the subspace consisting of all
$r$-tuples $(x_1,\cdots ,x_r)\in X^r$ such that,
for each $\sigma\in I$, there is some $j\in \sigma$
such that $x_j=*$.
}
\end{dfn}
%%%%%%%%%%%%%(End of Definition 3.1)%%
%%

%%(Lemma 3.2)%%
\begin{lmm}[cf. \cite{KY6}, Lemma 6.3]\label{lmm: veeK}
%%%%%%%%%%%%%
If $K$ is a simplicial complex on the index set $[r]$ and
$(X,*)$ is a based space, then
$\mathcal{Z}_K(X,*)=\vee^{I(K)}X$.
\end{lmm}
%%%%(Proof of Lemma 3.2)%%%%
\begin{proof}
%%%%%%%%%%%%%%%%%%%%%%%%%%%%
Suppose that $(x_1,\cdots,x_r)\in \mathcal{Z}_K(X,*)$.
Then there exists some $\sigma\in K$ such that 
$(x_1,\cdots ,x_r)\in (X,*)^{\sigma}$, that is, 
$x_j=*$ if $j\notin\sigma$.
%%%
Note that $\tau\not\subset \sigma$ for any
$\tau \in I(K)$; for  if $\tau\subset \sigma$ for some $\tau\in I(K)$,
then, by the definition of simplicial complex,  
$\tau\in K$, which is a contradiction.
%%%
Hence, for each $\tau\in I(K)$, there is some $j\in \tau\setminus\sigma$,
and so that $x_j=*$.
Thus $(x_1,\cdots ,x_r)\in \vee^{I(K)}X$, and we see that
$\mathcal{Z}_K(X,*)\subset \vee^KX$.
\par
Conversely, suppose that $(x_1,\cdots ,x_r)\in \vee^{I(K)}X$.
Let $\sigma =\{i\in [r]:x_i\not= *\}$.
Then  $\sigma\in K$; for if $\sigma\in I(K)$,  there is some $j\in \sigma$ such that
$x_j=*$. 
Since $j\in \sigma$ we must have $x_j\not= *$,
which is a contradiction.
Also, form the definition of $\sigma$,
$x_j=*$ if $j\notin\sigma$.
Hence, $(x_1,\cdots ,x_r)\in (X,*)^{\sigma}\subset \mathcal{Z}_K(X,*)$ and
 $\vee^KX\subset \mathcal{Z}_K(X,*)$.
\end{proof}
%%%%(End of proof of Lemma 3.2)%%%

%%%(Lemma 3.3)%%
\begin{lmm}\label{lmm: E-infty}
%%%%%%%%%%%%%%%%
There is a homotopy equivalence
$r_{\Sigma}:E^{\Sigma}_{}(\overline{U},\partial \overline{U})
\stackrel{\simeq}{\longrightarrow}
DJ(\KS).$
\end{lmm}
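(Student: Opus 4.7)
My plan is to construct $r_\Sigma$ as the composition of two homotopy equivalences, passing through the intermediate space
$W := \mathcal{Z}_{\KS}\bigl(\SP^\infty(\overline{U},\partial \overline{U}),\emptyset\bigr)$.
One equivalence identifies $W$ with $DJ(\KS)$ via the Dold--Thom theorem, and the other shows that the natural inclusion $W \hookrightarrow E^\Sigma(\overline{U},\partial \overline{U})$ is itself a homotopy equivalence.

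First, since $\overline{U}/\partial \overline{U}$ is canonically $S^2$, the Dold--Thom theorem supplies a natural based homotopy equivalence $\SP^\infty(\overline{U},\partial \overline{U}) \simeq \SP^\infty(S^2) \simeq K(\Z,2) = \CP^\infty$, sending the empty configuration to the basepoint. Applying this coordinate-wise and invoking the functoriality of the polyhedral product gives an induced equivalence $W \simeq \mathcal{Z}_{\KS}(\CP^\infty,*) = DJ(\KS)$; using Lemma~\ref{lmm: veeK} we may alternatively view both sides as fat wedges $\vee^{I(\KS)}$ of the respective based spaces.

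Next I would prove that the inclusion $j \colon W \hookrightarrow E^\Sigma(\overline{U},\partial\overline{U})$ is a homotopy equivalence. The inclusion itself is automatic: if some $\xi_j=\emptyset$ for $j \in \sigma \in I(\KS)$ then certainly $\cap_{i\in\sigma}\xi_i = \emptyset$. The key observation in the other direction is that for any $(\xi_1,\ldots,\xi_r) \in E^\Sigma(\overline{U},\partial\overline{U})$ the intersection condition is equivalent to requiring that the incidence set $\sigma(p) := \{i : p \in \xi_i\}$ lies in $\KS$ for every $p \in U$. I would then build a deformation retraction by a continuous one-parameter family of configurations that drags points radially toward $\partial \overline{U}$ (where they are ignored in $\SP^\infty$), arranged so that at time $t=1$, for each non-face $\sigma \in I(\KS)$ some coordinate $\xi_j$ with $j\in\sigma$ has all of its points pushed onto $\partial\overline{U}$ and thereby becomes the basepoint $\emptyset$ in $\SP^\infty(\overline{U},\partial\overline{U})$.

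The hard part is this last construction, namely producing a coherent continuous retraction onto $W$. The difficulty is that $I(\KS)$ can be large and intricate, and the individual pushing schemes for different minimal non-faces must be simultaneously compatible. A natural strategy is to proceed by a scanning-type argument in the spirit of Segal \cite{Se} and Guest \cite{Gu2}, partitioning the disk $\overline{U}$ via a fine grid and processing each small cell locally using the combinatorics of $\KS$; alternatively one can argue by induction on the non-faces via a quasi-fibration or Mayer--Vietoris decomposition, reducing ultimately to the case of a single primitive collection, where the statement becomes the classical homotopy equivalence between the complement of the thin diagonal and the fat wedge of copies of $\CP^\infty$. Composing the resulting retraction with the equivalence of the first step then yields the desired $r_\Sigma$.
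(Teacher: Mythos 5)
Your overall route is the same as the paper's: identify $DJ(\KS)=\mathcal{Z}_{\KS}(\CP^{\infty},*)$ with the ``fat wedge'' $\vee^{I(\KS)}\SP^{\infty}$ via Lemma \ref{lmm: veeK} together with $\SP^{\infty}(\overline{U},\partial\overline{U})\cong\SP^{\infty}(S^2,\infty)\cong\CP^{\infty}$, and then retract $E^{\Sigma}(\overline{U},\partial\overline{U})$ onto that fat wedge by pushing configuration points out to the boundary. The first half of your argument is fine. But the second half --- which you yourself flag as ``the hard part'' --- is not actually carried out, and the coherence problem you raise (different non-faces may want different coordinates $\xi_j$ to be annihilated, and a coordinate cannot be partially annihilated) is a genuine obstruction to any naive ``drag each offending coordinate to the boundary'' scheme. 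The alternatives you list (a scanning/grid argument, induction on non-faces via quasi-fibrations or Mayer--Vietoris) are not developed far enough to judge, and are in any case much heavier than what is needed.

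The device that closes this gap in the paper is an exhaustion by open subsets. Working in the model $E^{\Sigma}(S^2,\infty)$, let $E^{\Sigma}_{\epsilon}$ be the set of tuples $(\xi_1,\dots,\xi_r)$ such that for every $\sigma\in I(\KS)$ some $i\in\sigma$ has $\xi_i$ disjoint from the closed disk of radius $\epsilon$ about the origin. Since each $\xi_i$ is a finite configuration and $\bigcap_{i\in\sigma}\xi_i=\emptyset$ for each non-face, every point of $E^{\Sigma}(S^2,\infty)$ lies in some $E^{\Sigma}_{\epsilon}$, so these open sets exhaust the space. On $E^{\Sigma}_{\epsilon}$ a \emph{single} radial expansion (blowing up the $\epsilon$-disk to all of $\C$ and sending its complement to $\infty$) does the job for all non-faces simultaneously: it is injective on the $\epsilon$-disk, hence preserves the empty-intersection conditions at every time, and at time $1$ it sends to $\infty$ (i.e.\ to the empty configuration) exactly those $\xi_i$ avoiding the $\epsilon$-disk --- which coordinates get killed is determined by the configuration itself, not chosen per non-face, so no compatibility issue arises. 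The defining condition of $E^{\Sigma}_{\epsilon}$ guarantees the image lies in $\vee^{I(\KS)}\SP^{\infty}(S^2,\infty)$. Passing to the colimit over $\epsilon\to 0$ (the retractions being compatible) gives the desired deformation retraction, and composing with your first step yields $r_{\Sigma}$. So your proposal is structurally correct but incomplete at its central step; the fix is this $\epsilon$-filtration, not additional machinery.
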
 
%%%%%%%%%%%%%%%%
\begin{proof}
%%%%%%%%%%%%%%%
The proof is analogous to that of
\cite[Prop. 3.1]{Se}
and that of \cite[Lemma 7.10]{Ka1}.
Note that
$E^{\Sigma}_{}(\overline{U},\partial \overline{U})$ is homeomorphic to the space
$$
E^{\Sigma}_{}(S^2,\infty)
=\{(\eta_1,\cdots ,\eta_r)\in \SP^{\infty}(S^2,\infty)^r:
\cap_{i\in\sigma}\eta_i=\emptyset
\mbox{ for any }\sigma\in I(\KS)\}.
$$
For each $\epsilon >0$, let $E_{\epsilon}^{\Sigma}$ denote the open subset
of $E^{\Sigma}_{}(S^2,\infty)$
consisting of all $r$-tuples
$(\xi_1,\cdots ,\xi_r)\in
E^{\Sigma}_{}(S^2,\infty)$
such that, for any $\sigma \in I(\mathcal{K}_{\Sigma})$ there
exists some $i\in\sigma$  satisfying the condition $\xi_i\cap
\overline{U(\epsilon)}=\emptyset$.
%%%
For each $\epsilon >0$, radial expansion defines a deformation retraction
$r_{\epsilon}:E^{\Sigma}_{\epsilon}\stackrel{\simeq}{\rightarrow}
\vee^{I(\mathcal{K}_{\Sigma})}\SP^{\infty}(S^2,\infty)$
(in this case, if $\xi_i\cap \overline{U(\epsilon)}=\emptyset$
and $i\in \sigma$ (for any $\sigma\in I(K)$),
then $\xi_i$ gets retracted to $\infty$).
Since $E^{\Sigma}_{}(S^2,\infty)=
\bigcup_{\epsilon>0}E^{\Sigma}_{\epsilon}$,
there is a deformation retraction
$E^{\Sigma}((S^2,\infty)
\stackrel{\simeq}{\rightarrow}
\vee^{I(\mathcal{K}_{\Sigma})}\SP^{\infty}(S^2,\infty).$
Since there is a homeomorphism $\SP^{\infty}(S^2,\infty)\cong
\CP^{\infty}$,  
the assertion follows from Lemma \ref{lmm: veeK}.
%%%%%%%%%
\end{proof}
%%%%(End of Proof of Lemma 3.3)%%

%%(Proposition 3.4)%%
\begin{prp}\label{prp: Segal fibration}
%%%%%%%%%%
If $\{\textit{\textbf{n}}_k\}_{k=1}^r$ spans $\R^n$ and
$\XS$ is non-singular, there is a fibration sequence
(up to homotopy)
%%(3.1)%%
\begin{equation}\label{eq: Segal fibration}
%%%%%%%%%
\T^n_{\C} \stackrel{}{\longrightarrow} \XS \stackrel{p_{\Sigma}}{\longrightarrow} 
DJ(\mathcal{K}_{\Sigma}).
\end{equation}
%%%
\end{prp}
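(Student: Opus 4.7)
The plan is to realize $p_\Sigma: \XS\to DJ(\KS)$ as the principal $\T^n_{\C}$-bundle obtained by quotienting a common Borel construction by the residual action of $\T^n_{\C}=\T^r_{\C}/\GS$.

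First, I would verify that the hypothesis that $\{\textit{\textbf{n}}_k\}$ spans $\R^n$ makes the homomorphism $\phi:\T^r_{\C}\to\T^n_{\C}$ sending $(\mu_k)$ to $(m\mapsto\prod_k\mu_k^{\langle\textit{\textbf{n}}_k,m\rangle})$ surjective with kernel exactly $\GS$. Its dual on character lattices is the map $\psi:\Z^n\to\Z^r$, $m\mapsto(\langle\textit{\textbf{n}}_k,m\rangle)_k$, which is injective by the spanning hypothesis, and $\mathrm{Hom}(-,\C^*)$ turns injections into surjections because $\C^*$ is divisible. Thus $1\to\GS\to\T^r_{\C}\to\T^n_{\C}\to 1$ is a short exact sequence of topological groups.

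Next, I would model $DJ(\KS)$ as a Borel construction. Using the $\T^r$-equivariant retraction $U(\KS)\xrightarrow{\simeq}\mathcal{Z}_{\KS}$ of part (ii) of Lemma \ref{Lemma: BP}, it suffices to compute $E\T^r_{\C}\times_{\T^r_{\C}}\mathcal{Z}_{\KS}$ piecewise: on each cell $(D^2)^\sigma\times(S^1)^{[r]\setminus\sigma}$ the Borel construction splits into factors $ES^1\times_{S^1}D^2\simeq\CP^{\infty}$ (since $D^2$ is $S^1$-equivariantly contractible to $0$) and $ES^1\times_{S^1}S^1\simeq*$, producing $(\CP^{\infty},*)^\sigma$. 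Taking the union over $\sigma\in\KS$ recovers $\mathcal{Z}_{\KS}(\CP^{\infty},*)=DJ(\KS)$, so $DJ(\KS)\simeq E\T^r_{\C}\times_{\T^r_{\C}}U(\KS)$. Meanwhile, since $\GS$ acts freely on $U(\KS)$ by Lemma \ref{lmm: principal}, one has $\XS=U(\KS)/\GS\simeq E\T^r_{\C}\times_{\GS}U(\KS)$ by using the contractible space $E\T^r_{\C}$, on which $\GS$ also acts freely, as a model for $E\GS$.

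Finally, the natural map
\[
p_\Sigma:\ \XS\simeq E\T^r_{\C}\times_{\GS}U(\KS)\ \longrightarrow\ E\T^r_{\C}\times_{\T^r_{\C}}U(\KS)\simeq DJ(\KS)
\]
is precisely the quotient by the residual $\T^n_{\C}=\T^r_{\C}/\GS$ action, and this action is free because $\T^r_{\C}$ already acts freely on the contractible factor $E\T^r_{\C}$. Hence $p_\Sigma$ is a principal $\T^n_{\C}$-bundle, giving the desired fibration sequence $\T^n_{\C}\to\XS\to DJ(\KS)$. I expect the main technical point to be the piecewise Borel-construction identification $DJ(\KS)\simeq E\T^r_{\C}\times_{\T^r_{\C}}U(\KS)$, together with its naturality with respect to the inclusion $\GS\subset\T^r_{\C}$; once this is in place, the fibration structure of $p_\Sigma$ and the identification of its fiber with $\T^n_{\C}$ are formal.
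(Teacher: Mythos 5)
Your proposal is correct and follows essentially the same route as the paper: both arguments rest on the exact sequence $1\to\GS\to\T^r_{\C}\to\T^n_{\C}\to 1$ and on identifying $DJ(\KS)$ with the Borel construction $E\T^r_{\C}\times_{\T^r_{\C}}U(\KS)$ (which the paper simply cites from Buchstaber--Panov, Theorem 6.29, rather than re-deriving cell-by-cell as you do). The only difference is one of bookkeeping: you exhibit $p_{\Sigma}$ directly as the principal $\T^n_{\C}$-bundle coming from the residual free action on $E\T^r_{\C}\times_{\GS}U(\KS)\simeq\XS$, whereas the paper extends the Borel fibration $\T^n_{\C}\to\XS\to E\T^n_{\C}\times_{\T^n_{\C}}\XS$ and identifies its base with $DJ(\KS)$ via a $3\times 3$ comparison of fibration sequences (Cohen--Moore--Neisendorfer, Lemma 2.1).
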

%%%
\begin{proof}
%%%%%%
Let us write $(K,G,G_1)=(\mathcal{K}_{\Sigma},\T^n_{\C},\T^r_{\C})$
and 
we identify %make the identification
$\XS =\mathcal{Z}_K(\C,\C^*)/\GS$.
Since $\GS\cong \T^{r-n}_{\C}$,
there is a fibration sequence
$\GS \stackrel{i}{\longrightarrow} \T^r_{\C}=G_1 \to \T^n_{\C}=G$, where
$i$ denotes the inclusion.
Note that $G=\T^n_{\C}$ acts naturally on $\XS$ by the definition of
toric variety and that we have a fibration sequence
$G\to \XS \to EG\times_G\XS.$
Moreover, since the group 
$G_1=\T^r_{\C}$ also acts on $\mathcal{Z}_{K}(\C,\C^*)$
and we also obtain a fibration sequence
$G_1 \to \mathcal{Z}_{K}(\C,\C^*)\stackrel{}{\longrightarrow}
EG_1\times_{G_1}\mathcal{Z}_{K}(\C,\C^*).$
Hence, by using \cite[Lemma 2.1]{CMN}
we have the following homotopy commutative diagram
$$
\begin{CD}
\GS @>i>\subset> G_1=\T^r_{\C} @>\pi>> G=\T^n_{\C}
\\
\Vert @.  @VVV @VVV
\\
\GS @>>> \mathcal{Z}_K(\C,\C^*) @>q_{\Sigma}>> \XS =\mathcal{Z}_K(\C,\C^*)/\GS
\\
@VVV @VVV @V{p_{\Sigma}}VV
\\
* @>>> EG_1\times_{G_1}\mathcal{Z}_K(\C,\C^*)
@>E\pi\times q_{\Sigma}>>
EG\times_G \XS
\end{CD}
$$
where 
all horizontal and vertical sequences are fibration sequences
(up to homotopy equivalence).
Thus, the map 
$E\pi\times q_{\Sigma}:
EG_1\times_{G_1}\mathcal{Z}_K(\C,\C^*)
\stackrel{\simeq}{\longrightarrow}
EG\times_G \XS$ 
is a homotopy equivalence.
However, it follows from \cite[Theorem 6.29]{BP}
that there is a homotopy equivalence
$EG_1\times_{G_1}\mathcal{Z}_K(\C,\C^*)\simeq DJ(K)$.
Thus, there is a homotopy equivalence
$EG\times_G\XS\simeq DJ(K)$ and the assertion follows.
%%%
\end{proof}
%%%(End of proof fo Proposition 3.4)%%

%%(Example 3.5)%%
\begin{example}\label{example: Segal fibration}
%%%%%%%%%%%%%%%%%
{\rm
For a based space $(X,*)$, let
$W_{n+1}(X)$ denote the fat wedge of $X$ defined by
$
W_{n+1}(X)=\{
(x_0,\cdots ,x_n)\in X^{n+1}:x_i=*
\mbox{ for some }0\leq i\leq n\},
$
and
let $\textit{\textbf{e}}_k$ $(1\leq k\leq n$) be the standard basis of $\R^n$ given by
$$
\textit{\textbf{e}}_1=(1,0,0,\cdots ,0),
\textit{\textbf{e}}_2=(0,1,0,\cdots ,0),
\cdots
%,\textit{\textbf{e}}_{n-1}=(0,\cdots ,0,1,0),
,\textit{\textbf{e}}_n=(0,0,\cdots ,0,1).
$$
It is known that the fan $\Sigma$ of $\CP^n$ is given by
$$
\Sigma =
\{\mbox{Cone}(S):
S\subsetneqq \{\textit{\textbf{e}}_k\}_{k=0}^n\},
\mbox{ where we set }\textit{\textbf{e}}_0=-\sum_{k=1}^n\textit{\textbf{e}}_k.\footnote{%
%%%(FootNote 3)%%%%%%%
For $S=\emptyset$, we set $\mbox{Cone}(\emptyset )=\{{\bf 0}\}$.}
%%%(End of FootNOte)%%%%%
%%
%%
$$
%where we set $\textit{\textbf{e}}_0=-\sum_{k=1}^n\textit{\textbf{e}}_k$.
Hence, if $\XS =\CP^n$, we see that
$DJ(\KS)=W_{n+1}(\CP^{\infty})$ 
and 
the above fibration sequence (\ref{eq: Segal fibration}) 
coincides with the following fibration sequence
%%%
%%%(2.14)%%
\begin{equation}\label{eq: CP sequence}
%%%%
\T^n_{\C} \stackrel{}{\longrightarrow} \CP^n \stackrel{p_{\Sigma}}{\longrightarrow}
W_{n+1}(\CP^{n+1}).
\end{equation}
%%%
constructed by G. Segal in
\cite[\S 2]{Se},
\qed
}
%%%%
\end{example}
%%(End of Example 3.5)%%

%%(End of SECTION 3)%%%%%%%%%

%%%%(SECTION 4)%%%%
\section{The stable result}\label{section: stability}
%%%%%%%%%%%%%%%%%%%

%%(Definition 4.1)%%%
\begin{dfn}
%%%%
{\rm
Let $\textit{\textbf{a}}\in (\Z_{\geq 1})^r$ be an $r$-tuple of positive integers which satisfies
the condition (\ref{condition: a}).
Then it is easy to see that the following diagram is homotopy commutative:
$$
\begin{CD}
\Hol_D^*(S^2,\XS) @>i_D>> \Omega^2_D\XS @>>\simeq> \Omega_0^2\XS 
\\
@V{s_D}VV @V{\simeq}VV \Vert @.%@V{\simeq}VV
\\
\Hol_{D+\textit{\textbf{a}}}^*(S^2,\XS) 
@>i_{D+\textit{\textbf{a}}}>> 
\Omega^2_{D+\textit{\textbf{a}}}\XS  @>>\simeq> \Omega^2_0\XS
\end{CD}
$$
%%%
Hence we can stabilize the inclusion maps
%%
%%()%%%%%%%%%
\begin{equation*}\label{eq: inclusion stab}
%%%%%%%%%%%%%%%%
i_{D+\infty}=\lim_{k\to\infty}i_{D+k\textit{\textbf{a}}}:
\Hol^*_{D+\infty}(S^2,\XS)
=
\lim_{k\to\infty}\Hol_{D+k\textbf{\textit{a}}}^*(S^2,\XS) \to \Omega^2_0\XS .
\end{equation*}
%%%
}
\end{dfn}
%%(End of definition 4.1)%%%

In this section, we shall prove the following result.
%%%(Theorem 4.2)%%
\begin{thm}\label{thm: II}
%%%
The map
$i_{D+\infty}:
\Hol^*_{D+\infty}(S^2,\XS)
\stackrel{\simeq}{\longrightarrow}
\Omega^2_0\XS 
$
is a homotopy equivalence.
\end{thm}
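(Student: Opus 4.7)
The plan is to combine three ingredients: the stabilized scanning equivalence of Theorem \ref{thm: scanning map}, the identification $E^\Sigma(\overline{U},\partial\overline{U})\simeq DJ(\KS)$ of Lemma \ref{lmm: E-infty}, and the Segal-type fibration $\T^n_\C\to\XS\to DJ(\KS)$ of Proposition \ref{prp: Segal fibration}. The idea is to compare $i_{D+\infty}$ with $S$ via the projection $p_\Sigma$ and the retraction $r_\Sigma$, and deduce that $i_{D+\infty}$ is a weak equivalence from the known equivalences.

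First, I would loop the fibration of Proposition \ref{prp: Segal fibration} twice. Since $\T^n_\C$ is homotopy equivalent to the compact torus $T^n$, we have $\pi_k(\Omega^2\T^n_\C)=\pi_{k+2}(T^n)=0$ for every $k\geq 0$, so $\Omega^2\T^n_\C$ is weakly contractible. The induced fibration $\Omega^2\T^n_\C\to\Omega^2\XS\to\Omega^2 DJ(\KS)$ therefore shows that $\Omega^2 p_\Sigma$ is a weak equivalence on each component; since $p_\Sigma$ sends base point to base point, the restriction $\Omega^2 p_\Sigma:\Omega^2_0\XS\to\Omega^2_0 DJ(\KS)$ is a weak equivalence.

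Next, I would verify that the square
$$
\begin{CD}
\Hol^*_{D+\infty}(S^2,\XS) @>{i_{D+\infty}}>> \Omega^2_0\XS \\
@V{S}VV @VV{\Omega^2 p_\Sigma}V \\
\Omega^2_0 E^\Sigma(\overline{U},\partial\overline{U}) @>>{\Omega^2 r_\Sigma}> \Omega^2_0 DJ(\KS)
\end{CD}
$$
commutes up to homotopy. Given a holomorphic map $[f_1,\dots,f_r]\in\Hol^*_D(S^2,\XS)$ identified via (\ref{eq: identify}) with the $r$-tuple of divisors $(\xi_1,\dots,\xi_r)\in E^\Sigma_D(\C)$, the right-hand path sends $w\in S^2$ to the image in $DJ(\KS)=\mathcal{Z}_{\KS}(\CP^\infty,*)$ of $[f_1(w),\dots,f_r(w)]\in\XS$, which by the construction of $p_\Sigma$ in the proof of Proposition \ref{prp: Segal fibration} corresponds, modulo the $\T^n_\C$-action killed by the fibration, to the $r$-tuple of classes of the divisors of the $f_i$ under $\SP^{d_i}(\C)\hookrightarrow\SP^\infty(S^2,\infty)\cong\CP^\infty$; the left-hand path scans the configurations over $S^2$ and applies the radial retraction of Lemma \ref{lmm: E-infty} to extract essentially the same $r$-tuple of classes. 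Matching these two descriptions in a natural way is the toric analogue of Segal's comparison in \cite[Proposition 3.1]{Se}.

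Granting this, the conclusion is immediate: by Theorem \ref{thm: scanning map} the map $S$ is a homotopy equivalence, by Lemma \ref{lmm: E-infty} the map $\Omega^2 r_\Sigma$ is a homotopy equivalence of the relevant components, and by the first step $\Omega^2 p_\Sigma$ is a weak equivalence; the homotopy commutativity of the square then forces $i_{D+\infty}$ to be a weak equivalence, hence a homotopy equivalence because all spaces involved have the homotopy type of CW complexes. The main obstacle is precisely the verification of the homotopy commutativity of the square: unwinding the definition of $p_\Sigma$ from the diagram in the proof of Proposition \ref{prp: Segal fibration} and comparing it with the explicit geometric description of the scanning map under the retract $r_\Sigma$ requires a careful toric generalization, via the Cox coordinates of Theorem \ref{prp: Cox}, of the classical argument for $\CP^n$.
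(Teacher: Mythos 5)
Your overall strategy---compare $i_{D+\infty}$ with the stabilized scanning map $S$ through the double loop of the Segal-type fibration $\T^n_{\C}\to\XS\to DJ(\KS)$ and the equivalence $r_{\Sigma}$---is exactly the alternative route that the paper itself raises and then explicitly declines to follow (see the remark after the proof of Theorem \ref{thm: II} concerning the diagram (\ref{eq: diagram 4.3})). Your first step is fine: $\Omega^2\T^n_{\C}$ is weakly contractible, so $\Omega^2 p_{\Sigma}$ restricts to a weak equivalence $\Omega^2_0\XS\to\Omega^2_0 DJ(\KS)$. The genuine gap is the homotopy commutativity of your square. You describe both composites informally and assert that matching them is ``the toric analogue of Segal's comparison,'' but you never produce the homotopy, and this is not a routine verification: $p_{\Sigma}$ is only defined up to homotopy as the Borel-construction map $\XS\to EG\times_G\XS\simeq DJ(\KS)$ extracted from the diagram in the proof of Proposition \ref{prp: Segal fibration}, so it has no preferred point-set formula, whereas $S$ and $r_{\Sigma}$ are given by explicit point-set constructions on configurations. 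Bridging these two descriptions is precisely what Segal's Proposition 4.8 does for $\CP^n$, and the authors state that the toric analogue, while probably true, is tedious and is not carried out in the paper. As written, your proof therefore rests on an unproved (though plausible) key step.

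The paper's actual proof avoids this square altogether. It introduces the space $\tilde F(U)$ of $r$-tuples of not-necessarily-monic polynomials with no common zeros in $U$, shows that evaluation at $0$ gives a homotopy equivalence $ev:\tilde F(U)\to U(\KS)$ (Lemma \ref{lmm: ev}) and that the root map $u$ from $\tilde F(U)/\T^r_{\C}$ to $E^{\Sigma}(\overline{U},\partial\overline{U})$ is a homotopy equivalence, and then scans at the level of polynomials via $scan(f_1,\dots,f_r)(w)=(f_1(z+w),\dots,f_r(z+w))$. Every square in the resulting comparison diagram commutes by direct inspection of formulas, so $i_D$ and $sc_D$ are identified through a chain of explicit homotopy equivalences ($\Omega^2 ev$, $\Omega^2 p$, $\Omega^2 u$, $\Omega^2 q_{\Sigma}$), and the conclusion follows from Theorem \ref{thm: scanning map} after stabilizing. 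To complete your version you would either have to carry out the toric generalization of Segal's Proposition 4.8 in detail, or replace your square by this polynomial-level diagram as the paper does.
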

%%%%

%%%(Definition 4.3)%%%
\begin{dfn}
%%%
{\rm
(i)
Let $X\subset \C$ be an open set and let $F(X)$ denote the space
of $r$-tuples $(f_1(z),\cdots ,f_r(z))$
of (not necessarily monic) polynomials
satisfying the following two conditions:
\begin{enumerate}
\item[(i-1)]
$\sum_{k=1}^r\deg (f_k)\textit{\textbf{n}}_k=\mathbf{0}$.
\item[(i-2)]
For each $\sigma =\{i_1,\cdots ,i_s\}\in I(\mathcal{K}_{\Sigma})$,
the polynomials $f_{i_1}(z),\cdots ,f_{i_s}(z)$ have no common root in $X$.
\end{enumerate}
%%%%
An element 
$(f_1(z),\cdots ,f_r(z))\in F(X)$,
defines a map $X\to U(\mathcal{K}_{\Sigma})$ and represents a map
$X\to \XS$.
\par
(ii) Let $U=\{w\in \C:\vert w\vert <1\}$ and $ev_0:F(U)\to U(\mathcal{K}_{\Sigma})$ denote the map given by evaluation at $0$, i.e.
$ev_0(f_1,\cdots ,f_r)=(f_1(0),\cdots ,f_r(0))$.
\par
(iii)
Let $\tilde{F}(U)\subset F(U)$ denote the subspace
%consisting 
of all
$(f_1(z),\cdots ,f_r(z))\in F(X)$ such that no $f_i(z)$ is identically
zero, and
$ev:\tilde{F}(U)\to U(\mathcal{K}_{\Sigma})$
the map given by the restriction
$ev=ev_0\vert \tilde{F}(U)$.
%%%
}
%%%%%%%%
\end{dfn}
%%(End of Definition 4.3)%%%%%

%%(Lemma 4.4)%%
\begin{lmm}\label{lmm: ev}
%%%
$ev:\tilde{F}(U)\stackrel{\simeq}{\longrightarrow}
U(\mathcal{K}_{\Sigma})$
is a homotopy equivalence.
%%%
\end{lmm}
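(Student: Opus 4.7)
I plan to prove this by a variant of Segal's contraction argument, exploiting the fact that $U$ is contractible so that evaluation at $0$ should give a homotopy equivalence from the space $\tilde{F}(U)$ of polynomial \lq\lq maps\rq\rq\ $U\to U(\KS)$. First, I would consider the shrinking homotopy
$$
\Phi_s(f_1,\ldots,f_r)(z)=(f_1(sz),\ldots,f_r(sz)),\qquad s\in(0,1].
$$
For such $s$, this stays inside $\tilde{F}(U)$: the degrees $\deg f_i(sz)=\deg f_i$ are preserved (so $\sum_k\deg(f_k)\textit{\textbf{n}}_k=\mathbf{0}$ still holds), the polynomials remain non-identically zero, and any common root $\alpha\in U$ of $\{f_i(sz)\}_{i\in\sigma}$ for $\sigma\in I(\KS)$ would yield $s\alpha\in U$ as a common root of $\{f_i\}_{i\in\sigma}$, contradicting $(f_1,\ldots,f_r)\in\tilde{F}(U)$. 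As $s\to 0$, the tuple formally collapses to the constants $(f_1(0),\ldots,f_r(0))=ev(f)$.

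The difficulty is that the limit at $s=0$ may leave $\tilde{F}(U)$: if $f_i(0)=0$ for some $i$, the corresponding constant polynomial is identically zero. To address this, I would pass to the larger space $F(U)$ (under the convention that an identically-zero polynomial contributes $0$ to the degree sum), observe that $\Phi_s$ extends continuously to $s=0$ inside $F(U)$, and check that this gives a deformation retraction of $F(U)$ onto the subspace of constant tuples, which is exactly $U(\KS)$: indeed, the constant tuple $(c_1,\ldots,c_r)$ lies in $F(U)$ precisely when $\{i:c_i=0\}\in\KS$.

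It then remains to show that the inclusion $\tilde{F}(U)\hookrightarrow F(U)$ is itself a homotopy equivalence. The complement $F(U)\setminus\tilde{F}(U)$ is a union of strata parameterised by the nonempty subsets $Z\subseteq[r]$ with $f_i\equiv 0$ exactly for $i\in Z$. Using the auxiliary tuple $(d_1,\ldots,d_r)$ from assumption (\ref{equ: homogenous}.2), which satisfies $\sum_k d_k\textit{\textbf{n}}_k=\mathbf{0}$, one can perturb any degenerate tuple by replacing each vanishing component $f_i\equiv 0$ with $\epsilon z^{d_i}$ (for small $\epsilon>0$), compensating in the other components to preserve the degree sum; arranging this continuously in $Z$ gives a deformation retraction $F(U)\to\tilde{F}(U)$. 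Combining the three steps then yields $\tilde{F}(U)\simeq F(U)\simeq U(\KS)$ with the composed equivalence realised by $ev$.

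The main obstacle is this third step: the perturbation has to respect the degree condition (for which the tuple from assumption (\ref{equ: homogenous}.2) is essential, providing a canonical degree-$d_i$ replacement for a vanishing component) and the no-common-root condition throughout, while varying continuously in the stratum $Z$ and being compatible with the shrinking homotopy $\Phi_s$. An alternative route would be to verify directly that $ev$ is a quasi-fibration with contractible fibers, via a fibrewise deformation that shrinks the non-constant part of each factorisation $f_i=z^{k_i}g_i(z)$ (with $g_i(0)\ne 0$) toward the monomial $g_i(0)z^{k_i}$, modulo the same degree-sum bookkeeping.
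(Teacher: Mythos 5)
Your first two steps coincide with the paper's argument: the shrinking homotopy $f_i(z)\mapsto f_i(sz)$ is exactly the homotopy the paper uses to show that $ev_0\colon F(U)\to U(\KS)$ is a homotopy equivalence (with $i_0\colon U(\KS)\to F(U)$, inclusion of constant tuples, as homotopy inverse), and your identification of the constant tuples in $F(U)$ with $U(\KS)$ is correct. The divergence, and the genuine gap, is in your third step, the comparison of $\tilde{F}(U)$ with $F(U)$. The explicit deformation retraction you propose cannot be made continuous: a tuple lying on the stratum $Z$ receives a definite perturbation $\epsilon z^{d_i}$ in its $i$-th slot, while arbitrarily nearby tuples with $f_i$ small but not identically zero receive none, so the retraction jumps across the boundary of the stratum; making $\epsilon$ tend to $0$ as one approaches the non-degenerate locus forces $\epsilon=0$ on the stratum itself, where the perturbation is actually needed. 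The \lq\lq compensation in the other components to preserve the degree sum\rq\rq\ is also not available as a continuous operation: the degree vector is an integer-valued, locally constant invariant on the non-degenerate locus, and adjusting degrees of the other $f_j$ (say by multiplying by powers of $z$) both changes the component of $F(U)$ discontinuously and manufactures new roots at $0\in U$, threatening the no-common-root condition for $\sigma\in I(\KS)$.

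The paper sidesteps all of this with a soft general-position argument: $F(U)$ is an infinite-dimensional manifold and the degeneracy locus $\{$some $f_i\equiv 0\}$ has infinite codimension in it (an identically vanishing polynomial imposes infinitely many independent linear conditions once degrees are unbounded), so deleting it does not change the homotopy type and the inclusion $\tilde{F}(U)\hookrightarrow F(U)$ is a homotopy equivalence — no explicit retraction is constructed or needed. If you want to keep your outline, replace your third step with this codimension argument (or with a shift-type trick valid in infinite dimensions); as written, the explicit perturbation does not yield a well-defined continuous homotopy. Your alternative suggestion (showing $ev$ is a quasi-fibration with contractible fibres) is plausible in spirit but is only sketched and would face the same bookkeeping issues at the degenerate tuples.
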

%%%
\begin{proof}
%%%%%
Let $i_0:U(\mathcal{K}_{\Sigma})\to F(U)$ be the inclusion map given by viewing constants as polynomials. 
Clearly $ev_0\circ i_0=\mbox{id}$.
Let $f: F(U)\times [0,1]\to
F(U)$ be
the homotopy given by
$f((f_1,\cdots ,f_t),t)=(f_{1,t}(z),\cdots ,f_{r,t}(z))$,
where
$f_{i,t}(z)=f_i(tz)$.
This gives a homotopy between
$i_0\circ ev_0$ and the identity map, and this proves that $ev_0$ is a homotopy equivalence.
Since $F(U)$ is an infinite dimensional manifold and
$\tilde{F}(U)$ is a subspace of $F(U)$ of infinite codimension,
the inclusion
$\tilde{F}(U)\to F(U)$ is a homotopy equivalence.
Hence $ev$ is also a homotopy equivalence.
%%%
\end{proof}
%%%%%%%%%(End of proof of Lemma 4.4)%%%%

%%%(Proof of Theorem 4.2)%%
\begin{proof}[Proof of Theorem \ref{thm: II}]
%%%%%%%%%%%%%%%%%%%%%%%%%%
Let $U=\{w\in\C: \vert w\vert <1\}$ as before and
note that the group $\T^r_{\C}$ acts freely on the space
$\tilde{F}(X)$
by coordinate multiplication for $X=U$ or $\C$.
Let $\tilde{F}(X)/\T^r_{\C}$ denote the corresponding orbit space.
Let $u:\tilde{F}(U)\to E^{\Sigma}(\overline{U}, \partial \overline{U})$
denote the natural map which
assigns to an $r$-tuple  $[f_1(z),\cdots ,f_r(z)]\in \tilde{F}(U)$ of polynomials
the $r$-tuple of their roots which lie in $U$.
It is not difficult to see that the map $u$ is a homotopy equivalence.
Let  $scan: \tilde{F}(\C)\to \Map (\C,\tilde{F}(U))$ denote the map
given by $scan (f_1(z),\cdots ,f_r(z))(w)=(f_1(z+w),\cdots ,f_r(z+w))$
for $w\in\C$ and consider the diagram
$$
\begin{CD}
\tilde{F}(U) @>ev>\simeq>U(\mathcal{K}_{\Sigma})
\\
@V{p}VV @.
\\
\tilde{F}(U)/\T^r_{\C} @>u>\simeq> 
E^{\Sigma}(\overline{U},\partial \overline{U})
\end{CD}
$$
where
$p:\tilde{F}(U)\to \tilde{F}(U)/\T^r_{\C}$
denotes the natural projection map.
Note that $p$ is a $\T^r_{\C}$-principal bundle projection. 
%%%
%%%
Now consider the diagram below
$$
\begin{CD}
\tilde{F}(\C) @>scan>> \Map (\C, \tilde{F}(U)) 
@>ev_{\#}>\simeq> \Map (\C,U(\mathcal{K}_{\Sigma}))
\\
@V{p}VV @V{p_{\#}}VV @.
\\
\tilde{F}(\C)/\T^r_{\C} @>scan>>
\Map (\C, \tilde{F}(U)/\T^r_{\C}) 
@>u_{\#}>{\simeq}>
\Map (\C, E^{\Sigma}(\overline{U},\partial \overline{U}))
\end{CD}
$$
induced from the above diagram.
Observe that $\Map (\C,\cdot )$ can be replaced by $\Map (S^2,\cdot)$
by extending  from $\C$ to $S^2=\C\cup \infty$
(as base point preserving maps).
%%%
%%%
Thus by setting
$$
\begin{cases}
j_D:\Hol_D^*(S^2,\XS) \stackrel{\subset}{\longrightarrow} \tilde{F}(\C) 
\stackrel{scan}{\longrightarrow}
\Map_D^*(S^2,\tilde{F}(U))
=\Omega^2_D\tilde{F}(U)
\\
j_D^{\p}:E^{\Sigma}_D(\C)
\stackrel{\subset}{\longrightarrow}
\tilde{F}(\C)/\T^r_{\C} \stackrel{scan}{\longrightarrow}
\Map_D^*(S^2,\tilde{F}(U)/\T^r_{\C})
=\Omega^2_D(\tilde{F}(U)/\T^t_{\C})
\end{cases}
$$
we obtain the following commutative diagram, where
the suffix $D$ denotes the appropriate path component:
%%%%(The main diagram)%%
$$
\begin{CD}
%%%%%%
\Hol_D^*(S^2,\XS) 
@>j_D>> \Omega^2_D\tilde{F}(U) 
@>\Omega^2 ev>\simeq> \Omega^2_DU(\mathcal{K}_{\Sigma})
@>\Omega^2q_{\Sigma}>\simeq> \Omega^2_D \XS
\\
@V{\cong}VV 
 @V{\Omega^2p}V{\simeq}V @. @.
\\
E^{\Sigma}_D(\C)
@>j_D^{\p}>>
\Omega^2_D (\tilde{F}(U)/\T^r_{\C}) 
@>\Omega^2 u>{\simeq}>
\Omega^2_DE^{\Sigma}(\overline{U},\partial \overline{U})
@.
\end{CD}
$$
Note that the maps
$\Omega^2q_{\Sigma}$, $ev$, $\Omega^2p$ and $u$ are homotopy equivalences.
Moreover, from the definitions of the maps, one can see that the following two equalities hold (up to homotopy equivalence):
%%(4.2)%%
\begin{equation}
\Omega^2q_{\Sigma}\circ \Omega^2ev\circ  j_D=i_D,
\quad 
\Omega^2u\circ j_D^{\p}=sc_D.
\end{equation}
%%%
Hence, the maps $i_D$ and $sc_D$ are homotopic up to homotopy equivalences.
Thus, if we replace $D$ by $D+k\textit{\textbf{a}}$
and let $k\to\infty$ then,
by using Theorem \ref{thm: scanning map},
we see that the map $i_{D+\infty}$ is a homotopy equivalence.
%%%%%%%%%%%%%%%%%%%%%%%%
\end{proof}
%%(End of Proof of Theorem 4.2)%%%
%%%%%%%

%%'Remark 4.5)%%
\begin{rmk}
%%%
{\rm
It is not easy to show that 
the following diagram is homotopy commutative:
%%(4.3)%%
\begin{equation}\label{eq: diagram 4.3}
\begin{CD}
\tilde{F}(U) @>ev>\simeq> U(\mathcal{K}_{\Sigma}) @>q_{\Sigma}>> \XS
\\
@V{p}VV @. @V{p_{\Sigma}}VV
\\
\tilde{F}(U)/\T^r_{\C} @>u>{\simeq}> 
E^{\Sigma}(\overline{U},\partial \overline{U}) @>r_{\Sigma}>{\simeq}> 
DJ(\mathcal{K}_{\Sigma}) 
\end{CD}
\end{equation}
%%%%
Segal proved that the diagram (\ref{eq: diagram 4.3}) is homotopy
commutative for the case
$\XS =\CP^n$
(see \cite[Prop. 4.8]{Se}), and an analogous  method can probably be used to show that this diagram
is homotopy commutative for a general $\XS$. However, the argument seems tedious.
So we do not attempt to carry it out. 
Of course %(\ref{eq: diagram 4.3}) 
this result would give another proof of
Theorem \ref{thm: II}.
\qed
%%%%
}
%%%%
\end{rmk}
%%%%%%%(End of Remark 4.5)%%%%%

%%(SECTION 5)%%%
\section{Simplicial resolutions}\label{section: simplicial resolution}
%%%%%%%%%%%%%%%%

In this section, we summarize  the definitions of the non-degenerate simplicial resolution
and the associated truncated simplicial resolutions (\cite{Mo2},  \cite{Va}).
%%%%%
%%(Definition 5.1)%%
\begin{dfn}\label{def: def}
%%%%%%
{\rm
(i) For a finite set $\textbf{\textit{v}} =\{v_1,\cdots ,v_l\}\subset \R^N$,
let $\sigma (\textbf{\textit{v}})$ denote the convex hull spanned by 
$\textbf{\textit{v}}.$
%%%
%%%
Let $h:X\to Y$ be a surjective map such that
$h^{-1}(y)$ is a finite set for any $y\in Y$, and let
$i:X\to \R^N$ be an embedding.
Let  $\mathcal{X}^{\Delta}$  and $h^{\Delta}:{\mathcal{X}}^{\Delta}\to Y$ 
denote the space and the map
defined by
%%%
%%(5.1)%%%%%%%%
\begin{equation}
%%%%%%%%%%%%%%%
\mathcal{X}^{\Delta}=
\big\{(y,u)\in Y\times \R^N:
u\in \sigma (i(h^{-1}(y)))
\big\}\subset Y\times \R^N,
\ h^{\Delta}(y,u)=y.
\end{equation}
%%%%%%%%%%%%%%%%
The pair $(\mathcal{X}^{\Delta},h^{\Delta})$ is called
{\it the simplicial resolution of }$(h,i)$.
In particular, it %$(\mathcal{X}^{\Delta},h^{\Delta})$
is called {\it a non-degenerate simplicial resolution} if for each $y\in Y$
any $k$ points of $i(h^{-1}(y))$ span $(k-1)$-dimensional simplex of $\R^N$.
%%%%
\par
(ii)
For each $k\geq 0$, let $\mathcal{X}^{\Delta}_k\subset \mathcal{X}^{\Delta}$ be the subspace
given by 
%%(5.2)%%
\begin{equation}
%%%%%%%%
\mathcal{X}_k^{\Delta}=\big\{(y,u)\in \mathcal{X}^{\Delta}:
u \in\sigma (\textbf{\textit{v}}),
\textbf{\textit{v}}=\{v_1,\cdots ,v_l\}\subset i(h^{-1}(y)),l\leq k\big\}.
\end{equation}
%%%%
%%%%
We make the identification $X=\mathcal{X}^{\Delta}_1$ by identifying 
 $x\in X$ with the pair
$(h(x),i(x))\in \mathcal{X}^{\Delta}_1$,
and we note that  there is an increasing filtration
%%%(5.3)%%
\begin{equation}\label{equ: filtration}
%%%
\emptyset =
\mathcal{X}^{\Delta}_0\subset X=\mathcal{X}^{\Delta}_1\subset \mathcal{X}^{\Delta}_2\subset
\cdots \subset \mathcal{X}^{\Delta}_k\subset 
%\mathcal{X}^{\Delta}_{k+1}\subset
\cdots \subset \bigcup_{k= 0}^{\infty}\mathcal{X}^{\Delta}_k=\mathcal{X}^{\Delta}.
\end{equation}
%%%%%
Since the map $h^{\Delta}:\mathcal{X}^{\Delta}\stackrel{}{\rightarrow}Y$
 is a proper map, it extends to the map
 ${h}_+^{\Delta}:\mathcal{X}^{\Delta}_+\stackrel{}{\rightarrow}Y_+$
 between the one-point compactifications,
 where $X_+$ denotes the one-point compactification of a locally compact space $X$.
}
\end{dfn}
%%%%%(End of Definition 5.1)%%%%%%%%

%%%(Lemma 5.2)%%
\begin{lmm}[\cite{Va}, \cite{Va2} (cf. 
Lemma 3.3 in \cite{KY7})]\label{lemma: simp}
%%%%%%%%
Let $h:X\to Y$ be a surjective map such that
$h^{-1}(y)$ is a finite set for any $y\in Y,$ and let
$i:X\to \R^N$ be an embedding.
\par
%%(i)%%
$\I$
If $X$ and $Y$ are semi-algebraic spaces and the
two maps $h$, $i$ are semi-algebraic maps, then the map
${h}^{\Delta}_+:\mathcal{X}^{\Delta}_+\stackrel{\simeq}{\rightarrow}Y_+$
is a homotopy equivalence.
%%%
\par
$\II$
There is an embedding $j:X\to \R^M$ such that
the associated simplicial resolution
$(\tilde{\mathcal{X}}^{\Delta},\tilde{h}^{\Delta})$
of $(h,j)$ is non-degenerate.
%%%
%%%%
\par
$\III$
If there is an embedding $j:X\to \R^M$ such that the associated simplicial resolution
$(\tilde{\mathcal{X}}^{\Delta},\tilde{h}^{\Delta})$
of $(h,j)$ is non-degenerate,
the space $\tilde{\mathcal{X}}^{\Delta}$
is uniquely determined up to homeomorphism.
Moreover,
there is a filtration preserving homotopy equivalence
$q^{\Delta}:\tilde{\mathcal{X}}^{\Delta}\stackrel{\simeq}{\rightarrow}{\mathcal{X}}^{\Delta}$ such that $q^{\Delta}\vert X=\mbox{id}_X$.
\qed
%%%
\end{lmm}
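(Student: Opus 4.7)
For part $\I$, the plan is to apply the standard result on proper semi-algebraic surjections with contractible fibers. By construction each fiber $(h^{\Delta})^{-1}(y) = \sigma(i(h^{-1}(y)))$ is the convex hull of a finite set in $\R^N$, hence is a (possibly degenerate) simplex and in particular contractible. Moreover, since $h^{-1}(y)$ is finite and $i$ is an embedding, $h^{\Delta}$ is a proper semi-algebraic map whose fibers vary semi-algebraically in $y$. I will then invoke the Vassiliev--Vainshtein theorem: a proper semi-algebraic surjection with contractible fibers between semi-algebraic spaces induces a homotopy equivalence on one-point compactifications. This yields the desired conclusion for $h^{\Delta}_+$.

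For part $\II$, the plan is to construct a suitably generic semi-algebraic embedding. Start with any semi-algebraic embedding $i_0 : X \to \R^{N_0}$ and post-compose it with a high-degree Veronese-type (or moment-curve-type) semi-algebraic embedding $\R^{N_0} \to \R^{M}$. A standard dimension count shows that for $M$ large enough, the images of any finitely many distinct points are in affine general position, so any $k$ of them span a genuine $(k-1)$-simplex. Since each fibre $h^{-1}(y)$ is finite, this guarantees that the associated simplicial resolution $(\tilde{\mathcal{X}}^{\Delta},\tilde{h}^{\Delta})$ of $(h,j)$ is non-degenerate.

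For part $\III$, the plan is to use the classical interpolation argument. Given two non-degenerate embeddings $j_1 : X \to \R^{M_1}$ and $j_2 : X \to \R^{M_2}$, the product $j_{12} = (j_1,j_2) : X \to \R^{M_1+M_2}$ is again non-degenerate. The coordinate projections $\pi_s : \R^{M_1+M_2} \to \R^{M_s}$ induce filtration-preserving maps between the corresponding simplicial resolutions; a straight-line homotopy inside each fiber simplex (which is valid because all intermediate convex hulls remain non-degenerate simplices) shows that these projections are filtration-preserving homotopy equivalences. Combining the two comparison maps proves that the homeomorphism type of $\tilde{\mathcal{X}}^{\Delta}$ is independent of $j$. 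The map $q^{\Delta} : \tilde{\mathcal{X}}^{\Delta} \to \mathcal{X}^{\Delta}$ is then obtained by applying the same straight-line deformation to compare the non-degenerate resolution defined by $j$ with the (possibly degenerate) resolution defined by the original $i$, using the product embedding $(i,j)$ as a bridge and checking that $q^{\Delta}$ restricts to the identity on $X = \mathcal{X}^{\Delta}_1$.

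The main obstacle will be in part $\III$: ensuring that the straight-line deformation between fiber simplices truly preserves the filtration, so that the comparison map sends $\tilde{\mathcal{X}}^{\Delta}_k$ into $\mathcal{X}^{\Delta}_k$ for every $k$. This requires a careful check that, even when several vertices in the target resolution collapse, the interpolating simplices stay inside the appropriate filtration stage. Once this is verified, the semi-algebraic hypotheses of part $\I$ apply uniformly along the homotopy and the argument goes through.
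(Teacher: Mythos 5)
The paper gives no proof of this lemma: it is quoted from Vassiliev \cite{Va}, \cite{Va2} and \cite{KY7} (cf.\ Remark \ref{Remark: homotopy equ}, which notes that only the homology version of (i) is actually used), and your outline is precisely the standard argument from those sources, so it is essentially the same approach. Two minor points to keep in mind: in (ii) a single finite-dimensional embedding can be non-degenerate only when the fibre cardinalities of $h$ are uniformly bounded (true in the paper's application to $\pi_D:Z_D\to\Sigma_D$; the general case uses the sequence of embeddings of Remark \ref{Remark: non-degenerate}), and in (iii) the uniqueness up to \emph{homeomorphism} requires observing that the fibrewise affine projections between two non-degenerate resolutions preserve barycentric coordinates and hence are proper continuous bijections, not merely the homotopy equivalences your straight-line deformation provides.
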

%%%(End of Lemma 5.2)%%

%%%%%(Remark 5.3)%%%
\begin{rmk}\label{Remark: homotopy equ}
%%%%
{\rm
In this paper we only need the weaker assertion that the map ${h}_+^{\Delta}$ is a homology equivalence. 
One can easily prove this result
%that the map   $\overline{h}^{\Delta}$ is a homology equivalence 
by the same argument as used 
%in the proof of  Lemma 1 (page 90) 
in the second  revised edition of Vassiliev's book \cite[Proof of Lemma 1 (page 90)]{Va}.
\qed}
%%%(End of FootNote 1)%%
%%% 
%\qed
%%%%(ii)%%%
\end{rmk}

%%%%(Remark 5.4)%%%%%%%%%%%
%%%
\begin{rmk}[\cite{Va}, \cite{Va2}]\label{Remark: non-degenerate}
{\rm
Even for a  surjective map $h:X\to Y$ which is not finite to one,  
it is still possible to construct an associated non-degenerate simplicial resolution.
Recall that it is known that there exists a sequence of embeddings
$\{\tilde{i}_k:X\to \R^{N_k}\}_{k\geq 1}$ satisfying the following two conditions
for each $k\geq 1$ (\cite{Va}, \cite{Va2}).
%%%(Condition of non-degenerate simp. resolution)%%
\begin{enumerate}
%\item[(\ref{Remark: non-degenerate}$)_k$]
%%%%%%%%%%%%%%%%%%%
%\begin{enumerate}
%%(i)%%%
\item[(i)]
For any $y\in Y$,
any $t$ points of the set $\tilde{i}_k(h^{-1}(y))$ span $(t-1)$-dimensional affine subspace
of $\R^{N_k}$ if $t\leq 2k$.
%%(ii)%%%
\item[(ii)]
$N_k\leq N_{k+1}$ and if we identify $\R^{N_k}$ with a subspace of
$\R^{N_{k+1}}$, 
then $\tilde{i}_{k+1}=\hat{i}\circ \tilde{i}_k$,
where
$\hat{i}:\R^{N_k}\stackrel{\subset}{\rightarrow} \R^{N_{k+1}}$
denotes the inclusion.
%\end{enumerate}
\end{enumerate}
%%%%%%%%%%%%
In this situation, in fact,
a non-degenerate simplicial resolution may be
constructed by choosing a sequence of embeddings
$\{\tilde{i}_k:X\to \R^{N_k}\}_{k\geq 1}$ satisfying the above two conditions
for each $k\geq 1$.

%%%
Let
%%%()%%%%%
%begin{equation*}\label{2.1}
$\dis\mathcal{X}^{\Delta}_k=\big\{(y,u)\in Y\times \R^{N_k}:
u\in\sigma (\textbf{\textit{v}}),
\textbf{\textit{v}}
=\{v_1,\cdots ,v_l\}\subset \tilde{i}_k(h^{-1}(y)),l\leq k\big\}.$
%\end{equation*}
%%%%
%%
Then
by identifying naturally  ${\cal X}^{\Delta}_k$ with a subspace
of ${\cal X}_{k+1}^{\Delta}$,  define the non-degenerate simplicial
resolution ${\cal X}^{\Delta}$ of  $h$ as %the union  
$\dis {\cal X}^{\Delta}=\bigcup_{k\geq 1} {\cal X}^{\Delta}_k$.
%Non-degenerate simplicial resolutions have a long been used in algebraic geometry, and play  the central role in the work of Vassiliev  \cite{Va}.
}
\qed
\end{rmk}
%%%%%(End of Remark 5.4)%%%%%%%%
%%%

%%%(Definition 5.5)%%%
\begin{dfn}\label{def: 2.3}
{\rm
Let $h:X\to Y$ be a surjective semi-algebraic map between semi-algebraic spaces, 
$j:X\to \R^N$ be a semi-algebraic embedding, and let
$(\mathcal{X}^{\Delta},h^{\Delta}:\mathcal{X}^{\Delta}\to Y)$
denote the associated non-degenerate  simplicial resolution of $(h,j)$. 
%Under these conditions the map $h^{\Delta}$ is a homotopy equivalence as in Lemma \ref{lemma: simp}.
%%%
\par
%%%
Let $k$ be a fixed positive integer and let
$h_k:\mathcal{X}^{\Delta}_k\to Y$ be the map
defined by the restriction
$h_k:=h^{\Delta}\vert \mathcal{X}^{\Delta}_k$.
%\par
The fibers of the map $h_k$ are $(k-1)$-skeleton of the fibers of $h^{\Delta}$ and, in general,  always
fail to be simplices over the subspace
$Y_k=\{y\in Y:\mbox{card}(h^{-1}(y))>k\}.$
%$$
%Y_k=\{y\in Y:h^{-1}(y)\mbox{ consists of more than $k$ points}\}.
%$$
Let $Y(k)$ denote the closure of the subspace $Y_k$.
We modify the subspace $\mathcal{X}^{\Delta}_k$ so as to make  all
the fibers of $h_k$ contractible by adding to each fibre of $Y(k)$ a cone whose base
is this fibre.
We denote by $X^{\Delta}(k)$ this resulting space and by
$h^{\Delta}_k:X^{\Delta}(k)\to Y$ the natural extension of $h_k$.
\par
%%%%%%%%%%%%
Following  \cite{Mo3}, we call the map $h^{\Delta}_k:X^{\Delta}(k)\to Y$
{\it the truncated $($after the $k$-th term$)$  simplicial resolution} of $Y$.
Note that 
that there is a natural filtration
$$
 X^{\Delta}_0\subset
 X^{\Delta}_1\subset
%X^{\Delta}_2\subset
\cdots 
\subset X^{\Delta}_l\subset X^{\Delta}_{l+1}\subset \cdots
\subset  X^{\Delta}_k\subset X^{\Delta}_{k+1}
=X^{\Delta}_{k+2}
%=X^{\Delta}_{k+3}
=\cdots =X^{\Delta}(k),
$$
where $X^{\Delta}_0=\emptyset$,
$X^{\Delta}_l=\mathcal{X}^{\Delta}_l$ if $l\leq k$ and
$X^{\Delta}_l=X^{\Delta}(k)$ if $l>k$.
}
\end{dfn}
%%%%
%%(Remark 4)%%%
%\begin{rem}\label{Remark 4}
%Let $\varphi:A\to B$ be a simplicial map between simplicial complexes and
%let $C_{f}\varphi :C_{f}A\to B$ be its fibrewise cone construction.
%Because $C_f\varphi$ can be constructed in the category of simplicial complexes,
%it is a quasi-fibration and so it is a homotopy equivalence \cite{Hatch}.
%It is also know that
%for a semi-algebraic map $f:X\to Y$ between semi-algebraic spaces, there are
%semi-algebraic triangulations on $X$ and $Y$ such that 
%there is a semi-algebraic trivialization of the map $f$
%\cite[Theorems 9.2.1 and 9.3.2]{BCR}.
%\end{rem}
%%%%
%Then using these results, it is not difficult to prove the following.

%%
%%(Lemma 5.6)%%%%%%%%%
\begin{lmm}[\cite{Mo3}, cf. Remark 2.4 and Lemma 2.5 in \cite{KY4}]\label{Lemma: truncated}
%%%%%%%%%%%%%%%%%%%%%%%
%Let $h:X\to Y$ be a surjective semi-algebraic map between semi-algebraic spaces,
%and 
%let $j:X\to \R^N$ be a semi-algebraic embedding with the associated simplicial resolution
%$({\cal X}^{\Delta},h^{\Delta}:{\cal X}^{\Delta}\to Y)$.
%Then
Under the same assumptions and with the same notation as in Definition \ref{def: 2.3}, the map
$h^{\Delta}_k:X^{\Delta}(k)\stackrel{\simeq}{\longrightarrow} Y$ 
is a homotopy equivalence.
\qed
\end{lmm}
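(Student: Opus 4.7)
The plan is to prove the assertion by showing that $h^{\Delta}_k$ is a proper surjection all of whose fibres are contractible, and then concluding via a Vietoris--Smale-type argument (or, equivalently, via the Dold--Thom quasifibration criterion) that it is a weak, hence genuine, homotopy equivalence.

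First I would compute the fibres directly from the construction. For $y\in Y$ with $|h^{-1}(y)|\le k$ (equivalently $y\notin Y_k$), one has $(h^{\Delta}_k)^{-1}(y)=(h^{\Delta})^{-1}(y)=\sigma(j(h^{-1}(y)))$, a simplex of dimension at most $k-1$, hence contractible. For $y$ in the closed semi-algebraic set $Y(k)=\overline{Y_k}$, by construction a cone with base $h_k^{-1}(y)$ has been attached; the fibre of $h^{\Delta}_k$ at $y$ equals this cone and is again contractible. Properness of $h^{\Delta}_k$ follows from properness of $h^{\Delta}$ (cf.\ Lemma \ref{lemma: simp}) together with the fact that the fibrewise coning is performed over the closed subspace $Y(k)$ with compact cone-fibres.

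To upgrade fibrewise contractibility to a homotopy equivalence, I would stratify $Y$ by the descending chain of closed semi-algebraic sets $Y\supset Y(k)\supset Y(k+1)\supset\cdots$. Over the interior of each stratum $Y(l)\setminus Y(l+1)$ the cardinality $|h^{-1}(y)|$ is locally constant, so semi-algebraic (Hardt) triviality gives a local product structure for $h^{\Delta}_k$ with contractible fibre. Assembling these local trivializations, the Dold--Thom distinguished-cover criterion shows $h^{\Delta}_k$ to be a quasifibration over $Y$; combined with the vanishing of all homotopy groups of the fibre, this yields that $h^{\Delta}_k$ induces isomorphisms on all homotopy groups. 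Since both source and target are semi-algebraic and therefore have the homotopy type of CW complexes, this weak equivalence is in fact a genuine homotopy equivalence. An alternative streamlined route is to invoke Smale's Vietoris-type theorem directly: a proper surjection between ANRs with contractible point-inverses is a weak equivalence.

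The principal technical obstacle is controlling the behaviour of the local trivializations as the combinatorial type of the fibre of $h^{\Delta}$ jumps across the stratum boundaries, in particular verifying the Dold--Thom distinguished-neighbourhood hypotheses around $Y(l+1)\subset Y(l)$. This is a delicate but routine semi-algebraic argument carried out in detail in Mostovoy \cite{Mo3}; since that reference treats precisely this setup, the cleanest presentation is simply to reduce to loc.\ cit., having recorded the two key observations (contractibility of every fibre and properness of $h^{\Delta}_k$) that make the reduction legitimate.
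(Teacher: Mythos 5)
Your proposal is correct and follows essentially the same route as the source the paper cites for this lemma (the paper itself gives no proof, deferring entirely to \cite{Mo3}): all point-inverses of $h^{\Delta}_k$ are contractible (full simplices on at most $k$ vertices off $Y(k)$, cones over the $(k-1)$-skeleta on $Y(k)$), the map is proper, and a Vietoris--Smale/quasifibration argument in the semi-algebraic category upgrades this to a homotopy equivalence. The only cosmetic point is that your two fibre descriptions overlap on $Y(k)\setminus Y_k$, where the correct fibre is the attached cone rather than the bare simplex; since a cone is contractible in any case, this does not affect the argument.
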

%%%%%%%%%%%

%%%
%%%(SECTION 6)%%%
\section{The Vassiliev spectral sequence.}
\label{section: spectral sequence}
%%%%%%%%%%%%%%%%%

In this section, we always assume that
$D=(d_1,\cdots ,d_r)\in (\Z_{\geq 1})^r$ and $\textit{\textbf{a}}=(a_1,\cdots ,a_r)\in (\Z_{\geq 1})^r$
are $r$-tuples of positive integers which satisfy the conditions
(\ref{equ: homogenous}.2) and (\ref{condition: a}).

From now on, we identify the space
 $\Hol_D^*(S^2,\XS)$ with the space consisting 
of all
$r$-tuples
$(f_1(z),\cdots ,f_{r}(z))\in \P^D$
of monic polynomials
such that
$f_{i_1}(z),\cdots ,f_{i_s}(z)$
have no common root for any
$\sigma =\{i_1,\cdots ,i_s\}\in I(\mathcal{K}_{\Sigma})$
as in Definition \ref{dfn: holomorphic}.
%%
%%%%%%%%%%%%%%%%%%%
First, we construct the Vassiliev spectral sequence.

%%(Definition 6.1)%%%%%%%
\begin{dfn}\label{Def: 3.1}
{\rm
%%()%%%%
%%%%%(i)%%%
(i)
Let $\Sigma_D$ denote {\it the discriminant} of $\Hol_D^*(S^2,\XS)$ in $\P^D$ 
given by the complement
%%%%
\begin{align*}
\Sigma_D&=
\P^D \setminus \Hol_d^*(S^2,\XS)
\\
&=
\{(f_1(z),\cdots ,f_{r}(z))\in \P^D :
(f_1(x),\cdots ,f_{r}(x))\in L(\Sigma)
\mbox{ for some }x\in \C\},
\end{align*}
where we set

%%%(6.1)%%
\begin{equation}
%%%
L(\Sigma) =
\bigcup_{\sigma\in I(\mathcal{K}_{\Sigma})}L_{\sigma}
=
~\bigcup_{\sigma\subset[r],\sigma\notin K_{\Sigma}}
L_{\sigma}.
\end{equation}
%%%%%%
\par
%%(ii)%%
(ii)
Let  $Z_D\subset \Sigma_D\times \C$
denote %the subspace
{\it the tautological normalization} of 
 $\Sigma_D$ consisting of 
 all pairs 
$(F,x)=((f_1(z),\ldots ,f_{r}(z)),
x)\in \Sigma_D\times\C$
satisfying the condition
$(f_1(x),\cdots ,f_{r}(x))\in L(\Sigma)$.
%%%
%\par
%%%%
Projection on the first factor  gives a surjective map
$\pi_D :Z_D\to \Sigma_D$.
%%%
%%%
}
\end{dfn}
%%%%%%
%%(End of Definition 6.1)%%%%

%%(Remark 6.2)%%
\begin{rmk}
%%%%%%%%%%%%%%%
{\rm
Let $\sigma_k\in [r]$ for $k=1,2$.
It is easy to see that
$L_{\sigma_1}\subset L_{\sigma_2}$ if
$\sigma_1\supset \sigma_2$.
Letting
%%()%%
\begin{equation*}
%%% 
Pr(\Sigma)=\{\sigma =\{i_1,\cdots ,i_s\} \subset [r]:
\{\textit{\textbf{n}}_{i_1},\cdots ,\textit{\textbf{n}}_{i_s}\}
\mbox{ is a primitive collection}\},
\end{equation*}
%%%
we see that
%%(6.2)%%
\begin{equation}
L(\Sigma)=\bigcup_{\sigma\in Pr(\Sigma)}L_{\sigma}
\end{equation}
%%%  
and by using (\ref{eq: rmin}) we obtain the equality 
%%%(6.3)%%
\begin{equation}\label{eq: dim rmin}
%%%
\dim L(\Sigma)=2(r-\rmin (\Sigma)).
\end{equation}
}
%%%
\end{rmk}
%%%(End of Remark 6.2)%%%%%

Our goal in this section is to construct, by means of the
{\it non-degenerate} simplicial resolution  of the discriminant, a spectral sequence converging to the homology of
$\Hol_D^*(S^2,\XS)$.

%%(Definition 6.3)%%%%
\begin{dfn}\label{non-degenerate simp.}
%%%
{\rm
Let 
$(\mathcal{X}^D,{\pi}^{\Delta}_D:\mathcal{X}^D\to\Sigma_D)$ 
%and
%$(\tilde{\SZ}(d),\ ^{\p}\tilde{\pi}_d^{\Delta}:\SZ(d)\to\Sigma_d^*)$ 
be the non-degenerate simplicial resolution associated to the surjective map
$\pi_D:Z_D\to \Sigma_D$ 
with the natural increasing filtration as in Definition \ref{def: def},
$$
\emptyset =
\SZ_0
\subset \SZ_1\subset 
\SZ_2\subset \cdots
\subset 
\SZ=\bigcup_{k= 0}^{\infty}\SZ_k.
$$
}
\end{dfn}
%%%(End of Definition 6.3)%%%%%%

%%
%%%%%%
%By Lemma \ref{lemma: simp} 
By Lemma \ref{lemma: simp},
the map
$\pi_D^{\Delta}:
\SZ\stackrel{\simeq}{\rightarrow}\Sigma_D$
is a homotopy equivalence which
extends to  a homotopy equivalence
%%%%%%%%%
$\pi_{D+}^{\Delta}:\SZ_+\stackrel{\simeq}{\rightarrow}{\Sigma_{D+}},$
%%%%%
where $X_+$ denotes the one-point compactification of a
locally compact space $X$.
%%
%\par
%%%
Since
${\mathcal{X}_k^{D}}_+/{\SZ_{k-1}}_+
\cong (\SZ_k\setminus \SZ_{k-1})_+$,
we have a spectral sequence 
%%%%%%%%%%%
%$$
$$
\big\{E_{t;D}^{k,s},
d_t:E_{t;D}^{k,s}\to E_{t;D}^{k+t,s+1-t}
\big\}
\Rightarrow
H^{k+s}_c(\Sigma_D,\Z),
$$
%$$
where
$E_{1;D}^{k,s}=\tilde{H}^{k+s}_c(\SZ_k\setminus\SZ_{k-1},\Z)$ and
$H_c^k(X,\Z)$ denotes the cohomology group with compact supports given by 
$
H_c^k(X,\Z)= H^k(X_+,\Z).
$
%%%%%%%%%%%%%
\par\vspace{2mm}\par
%%%%%
Let $N(D)$ and $N(\textit{\textbf{a}})$ denote the positive integers given by
%%(6.4)%%
\begin{equation}
%%%%%%%%
N(D)=\sum_{k=1}^r d_k,\quad
N(\textit{\textbf{a}})=\sum_{k=1}^ra_k.
\end{equation}
%%%%%%%%%
%%%
%%%
Since there is a homeomorphism
$\P^D\cong \C^{N(D)}$,
by Alexander duality  there is a natural
isomorphism

%%%(6.5)%%%
\begin{equation}\label{Al}
%%%%%%%%%
\tilde{H}_k(\Hol_d^*(S^2,\XS),\Z)\cong
\tilde{H}_c^{2N(D)-k-1}(\Sigma_D,\Z)
\quad
\mbox{for any }k.
\end{equation}
%%%
By
reindexing we obtain a
spectral sequence
%%

%%%(6.6)%%%
\begin{eqnarray}\label{SS}
%%%%%%%%%%%%%%%%%%%
&&\big\{E^{t;D}_{k,s}, \tilde{d}^{t}:E^{t;D}_{k,s}\to E^{t;D}_{k+t,s+t-1}
\big\}
\Rightarrow H_{s-k}(\Hol_d^*(S^2,\XS),\Z),
\end{eqnarray}
%%%%%%%
%if $s-k\leq 2nd-2$,
where
$E^{1;D}_{k,s}=
\tilde{H}^{2N(D)+k-s-1}_c(\SZ_k\setminus\SZ_{k-1},\Z).$
%and $\tilde{E}^t_{r,s}(d)=E_t^{r,N_d^*-1-s}(d).$
%%%%%%
\par\vspace{2mm}\par
%%%%
%Let $C_k(X)$ be the configuration space of unordered 
%$k$-distinct  points in $X$ given by the orbit space
%$C_k(X)=F(X,k)/S_k$.
%%%
Let
 $L_{k;\Sigma}\subset (\C\times L(\Sigma))^k$ denote the subspace
defined by
%%%%%()
%\begin{equation*}\label{l}
$$
L_{k;\Sigma}=\{((x_1,s_1),\cdots ,(x_k,s_k)): 
x_j\in \C,s_j\in L(\Sigma),
x_l\not= x_j\mbox{ if }l\not= j\}.
$$
%\end{equation*}
%%%%%%%%%
The symmetric group $S_k$ on $k$ letters  acts on $L_{k;\Sigma}$ by permuting coordinates. Let
$C_{k;\Sigma}$ denote the orbit space
%%%%%
%%%%(6.7)%%%%
\begin{equation}\label{Ck}
C_{k;\Sigma}=L_{k;\Sigma}/S_k.
\end{equation}
%%%%%%%%%%%
%%%%
Note that $C_{k;\Sigma}$ 
is a cell-complex of  dimension
$2(1+r-r_{\rm min}(\Sigma))k$
by (\ref{eq: dim rmin}).
%%

%%%%(Lemma 6.4)%%%%
\begin{lmm}\label{lemma: vector bundle*}
%%%%%%%%%%%%%%%%%%
%%%
If  
$1\leq k\leq d_{\rm min}=\min\{d_1,\cdots ,d_r\}$,
$\SZ_k\setminus\SZ_{k-1}$
is homeomorphic to the total space of a real affine
bundle $\xi_{D,k}$ over $C_{k;\Sigma}$ with rank 
$l_{D,k}=2N(D)-2rk+k-1$.
%%%%%%%%%%%%%%%%%%
\end{lmm}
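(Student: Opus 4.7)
The plan is to construct an explicit projection $p_{D,k} : \mathcal{X}^D_k \setminus \mathcal{X}^D_{k-1} \to C_{k;\Sigma}$ and identify each fibre as a real affine space of dimension $l_{D,k}$. By non-degeneracy of the simplicial resolution (Definition \ref{non-degenerate simp.}), every point of $\mathcal{X}^D_k \setminus \mathcal{X}^D_{k-1}$ has a unique representation as a pair $(F, u)$, where $F = (f_1(z), \ldots, f_r(z)) \in \Sigma_D$ and $u$ lies in the open interior of the $(k-1)$-simplex spanned by the embedded images of exactly $k$ distinct points $x_1, \ldots, x_k \in \pi_D^{-1}(F) \subset \C$; each such $x_j$ satisfies $(f_1(x_j), \ldots, f_r(x_j)) \in L(\Sigma)$. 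One then sets
\[
p_{D,k}(F, u) \;=\; \bigl\{(x_j,\, (f_1(x_j), \ldots, f_r(x_j)))\bigr\}_{j=1}^{k} \in C_{k;\Sigma}.
\]

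The fibre of $p_{D,k}$ over a configuration $c = \{(x_j, s_j)\}_{j=1}^{k}$ decomposes as a product of two factors. The polynomial factor consists of tuples $(f_1, \ldots, f_r) \in \P^D$ with $f_i(x_j) = s_{j,i}$ for all $i, j$ (these automatically lie in $\Sigma_D$ because $s_j \in L(\Sigma)$). Since $k \leq d_{\min} \leq d_i$ for each $i$ and the $x_j$ are distinct, non-vanishing of the Vandermonde determinant implies that the $k$ affine interpolation conditions on $\P^{d_i}(\C) \cong \C^{d_i}$ are independent, cutting out a non-empty complex affine subspace of complex dimension $d_i - k$. Taking the product over $i$ yields a real affine space of dimension $\sum_{i=1}^{r} 2(d_i - k) = 2N(D) - 2rk$. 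The geometric factor, the open $(k-1)$-simplex of barycentric parameters for $u$, contributes a further dimension $k - 1$, for a total fibre dimension $l_{D,k} = 2N(D) - 2rk + k - 1$.

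Local triviality would follow from Lagrange interpolation. Locally on $C_{k;\Sigma}$ the free $S_k$-action on ordered configurations admits continuous sections, so one may choose a consistent labelling of the $x_j$ over a neighbourhood. Then the explicit decomposition
\[
f_i(z) \;=\; \Bigl(\prod_{j=1}^{k}(z - x_j)\Bigr)\, g_i(z) \;+\; \sum_{j=1}^{k} s_{j,i} \prod_{l \neq j}\frac{z - x_l}{x_j - x_l},
\]
with $g_i$ ranging over monic polynomials of degree $d_i - k$, gives a continuous affine parametrization of the polynomial factor, while barycentric coordinates provide the same for the geometric factor. Relabellings of the $x_j$ act on both factors by affine transformations — permuting Lagrange basis terms and barycentric coordinates respectively — so the transition data are affine and descend to the unordered base $C_{k;\Sigma}$. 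This descent to the $S_k$-quotient is the main technical point of the argument, but once it is verified the two-part fibre dimension count directly delivers the rank $l_{D,k}$ and exhibits $\mathcal{X}^D_k \setminus \mathcal{X}^D_{k-1}$ as the promised real affine bundle $\xi_{D,k}$ over $C_{k;\Sigma}$.
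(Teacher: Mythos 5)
Your proposal is correct and follows essentially the same route as the paper: the same projection to $C_{k;\Sigma}$ via the uniquely determined $k$ points, the same Vandermonde argument showing the $k$ interpolation conditions on each $\P^{d_i}(\C)$ are independent, and the same identification of the fibre as (open $(k-1)$-simplex) $\times$ (real affine space of dimension $2N(D)-2rk$). Your explicit Lagrange-interpolation trivialization and the discussion of descent through the $S_k$-action merely flesh out the local triviality that the paper dismisses as "easy to show."
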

%%%%%%%%(Proof of Lemma 6.4)%%%
\begin{proof}
%%%%%%%%%%%
The argument is exactly analogous to the one in the proof of  
\cite[Lemma 4.4]{AKY1}. 
%%%
%%%
Namely, an element of $\SZ_k\setminus\SZ_{k-1}$ is represented by 
$(F,u)=((f_1,\cdots ,f_{r}),u)$, where 
$F=(f_1,\cdots ,f_{r})$ is an 
$r$-tuple of monic polynomials in $\Sigma_D$ and $u$ is an element of the interior of
the span of the images of $k$ distinct points 
$\{x_1,\cdots, x_k\}\in C_k(\C)$ 
such that
%%%%%
$F(x_j)=(f_1(x_j),\cdots ,f_{r}(x_j))\in L(\Sigma)$ for each $1\leq j\leq k$, 
%%%%
under a suitable embedding.
%of $\C$ into Euclidean space.% satisfying the condition ({\ref{equ: filtration}}$)_k$.
%%%%
\ 
Since the $k$ distinct points $\{x_j\}_{j=1}^k$ 
are uniquely determined by $u$, by the definition of the non-degenerate simplicial resolution, %(cf.  ({\ref{equ: filtration}}$)_k$),
 there are projection maps
%%%%%%%%%%%% 
$\pi_{k,D} :\mathcal{X}^{D}_k\setminus
\mathcal{X}^{D}_{k-1}\to C_{k;\Sigma}$
%%%%%%%%%%%%
defined by
$((f_1,\cdots ,f_{r}),u) \mapsto 
\{(x_1,F(x_1)),\dots, (x_k,F(x_k))\}$. 

\par
%%%
%%%%%(Fiber of pi_k)%%%%%%
Now suppose that $1\leq k\leq d_{\rm min}$.
Let $c=\{(x_j,s_j)\}_{j=1}^k\in C_{k;\Sigma}$
$(x_j\in \C$, $s_j\in L(\Sigma))$ be any fixed element and consider the fibre  $\pi_{k,D}^{-1}(c)$.
%%%
For each $1\leq j\leq k$,
we set $s_j=(s_{1,j},\cdots ,s_{r,j})$ and
consider the condition  
%%%(6.8)%%%
\begin{equation}\label{equ: pik}
%%%%%%%%%%%
F(x_j)=(f_1(x_j),\cdots ,f_{r}(x_j))=s_j
\quad
\Leftrightarrow
\quad
f_t(x_j)=s_{t,j}
\quad
\mbox{for }1\leq t\leq r.
\end{equation}
%%%%%%%%%%%
%%
In general, %for each $1\leq t\leq r$,
the condition $f_t(x_j)=s_{t,j}$ gives
one  linear condition on the coefficients of $f_t$,
and determines an affine hyperplane in $\P^{d_t}(\C)$. 
%%%
For example, if we set $f_t(z)=z^{d_t}+\sum_{i=0}^{d_t-1}a_{i,t}z^{i}$,
then
$f_t(x_j)=s_{t,j}$ for any $1\leq j\leq k$
if and only if
%%%%%%
%%(6.9)(matrix equation)%%
\begin{equation}\label{equ: matrix equation}
%%%%%%%%%
\begin{bmatrix}
1 & x_1 & x_1^2 & \cdots & x_1^{d_t-1}
\\
1 & x_2 & x_2^2 & \cdots & x_2^{d_t-1}
\\
\vdots & \ddots & \ddots & \ddots & \vdots
%\\
%1 & x_{k-1} & x_{k-1}^2 & \cdots & x_{k-1}^{d-1}
\\
1 & x_k & x_k^2 & \cdots & x_k^{d_t-1}
\end{bmatrix}
%%%%
\cdot
\begin{bmatrix}
a_{0,t}\\ a_{1,t} \\ \vdots %\\ a_{2,t} 
\\ a_{d_t-1,t}
\end{bmatrix}
=
\begin{bmatrix}
s_{t,1}-x_1^{d_t}\\ s_{t,2}-x_2^{d_t} \\ \vdots %\\ s_{t,k-1}-x_{k-1}^d 
\\ s_{t,k}-x_k^{d_t}
\end{bmatrix}
\end{equation}
%%%%
%%%%
Since $1\leq k\leq d_{\rm min}$ and
 $\{x_j\}_{j=1}^k\in C_k(\C)$,  
it follows from the properties of Vandermonde matrices that the condition (\ref{equ: matrix equation}) 
gives exactly $k$ independent conditions on the coefficients of $f_t(z)$.
Thus  the space of polynomials $f_t(z)$ in $\P^{d_t}(\C)$ which satisfies
(\ref{equ: matrix equation})
is the intersection of $k$ affine hyperplanes in general position
and has codimension $k$ in $\P^{d_t}(\C)$.
%%%%%
Hence,
the fibre $\pi_{k,D}^{-1}(c)$ is homeomorphic  to the product of an open $(k-1)$-simplex
 with the real affine space of dimension
 $2\sum_{i=1}^r(d_i-k)=2N(D)-2rk$.
It is now easy to show that  $\pi_{k,D}$ is a (locally trivial) real affine bundle over $C_{k;\Sigma}$ of rank $l_{D,k}
=2N(D)-2rk+k-1$.
\end{proof}
%%(End of proof of Lemma 6.4)%%%

%%%%%%(Lemma 6.5)%%
\begin{lmm}\label{lemma: E11}
%%%%%%
If $1\leq k\leq  d_{\rm min}$, there is a natural isomorphism
$$
E^{1;D}_{k,s}\cong
\tilde{H}^{2rk-s}_c(C_{k;\Sigma},\pm \Z),
$$
where 
the twisted coefficients system $\pm \Z$  comes from
the Thom isomorphism.
\end{lmm}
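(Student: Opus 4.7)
The plan is to identify $E^{1;D}_{k,s}$ with a twisted cohomology group of $C_{k;\Sigma}$ by combining Lemma \ref{lemma: vector bundle*} with the Thom isomorphism for compactly supported cohomology. By definition,
$$
E^{1;D}_{k,s}=\tilde{H}^{2N(D)+k-s-1}_c(\mathcal{X}^{D}_k\setminus\mathcal{X}^{D}_{k-1},\Z),
$$
and when $1\leq k\leq d_{\rm min}$, Lemma \ref{lemma: vector bundle*} tells us that the projection $\pi_{k,D}\colon\mathcal{X}^{D}_k\setminus\mathcal{X}^{D}_{k-1}\to C_{k;\Sigma}$ is a locally trivial real affine bundle $\xi_{D,k}$ of rank $l_{D,k}=2N(D)-2rk+k-1$. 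First I would replace this affine bundle by its underlying real vector bundle (affine bundles have the same homotopy type as their associated vector bundle, so cohomology with compact supports is preserved under the natural compactification).

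Next I would apply the Thom isomorphism in compactly supported cohomology. Since the total space of $\xi_{D,k}$ has compactly supported cohomology of $C_{k;\Sigma}$ shifted by the rank, but with coefficients twisted by the orientation line bundle of $\xi_{D,k}$ (which we denote $\pm\Z$), one obtains
$$
\tilde{H}^{2N(D)+k-s-1}_c(\mathcal{X}^{D}_k\setminus\mathcal{X}^{D}_{k-1},\Z)\cong \tilde{H}^{2N(D)+k-s-1-l_{D,k}}_c(C_{k;\Sigma},\pm\Z).
$$
A direct calculation gives $2N(D)+k-s-1-l_{D,k}=2rk-s$, which yields the stated isomorphism.

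The only substantive point to verify is the nature of the twisting. This arises because $C_{k;\Sigma}=L_{k;\Sigma}/S_k$ is an orbit space, and the symmetric group action permutes the $k$ coordinates $(x_j,s_j)$ simultaneously; on the fibers of the affine bundle (which are described by the system (\ref{equ: matrix equation}) together with an open $(k-1)$-simplex spanned by the $k$ points) this action acts by a permutation whose sign on the orientation of the fiber agrees with the sign of the permutation of the points $\{x_1,\ldots,x_k\}$. This gives precisely the sign-representation twist $\pm\Z$, as is standard in Vassiliev-type arguments (cf.\ \cite[Lemma 4.4]{AKY1}); so this is a bookkeeping issue rather than a genuine obstacle.

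The main thing to be careful about is the sign of the twisted coefficient system and making sure the Thom isomorphism is applied correctly in the compactly supported setting over a non-compact base; but once the affine-bundle structure from Lemma \ref{lemma: vector bundle*} is in hand, the rest is a direct degree count.
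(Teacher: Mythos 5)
Your proposal is correct and follows essentially the same route as the paper: identify $\mathcal{X}^{D}_k\setminus\mathcal{X}^{D}_{k-1}$ with the total space of the affine bundle $\xi_{D,k}$ over $C_{k;\Sigma}$ from Lemma \ref{lemma: vector bundle*}, apply the Thom isomorphism (the paper phrases this via the homeomorphism $(\mathcal{X}^{D}_k\setminus\mathcal{X}^{D}_{k-1})_+\cong T(\xi_{D,k})$), and check the degree count $2N(D)+k-s-1-l_{D,k}=2rk-s$. Your added discussion of where the sign twist comes from is consistent with the paper's (terser) treatment.
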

%%%%
\begin{proof}
%%%(Proof of Lemma 6.5)%%
Suppose that $1\leq k\leq d_{\rm min}$.
%%%
By Lemma \ref{lemma: vector bundle*}, there is a
homeomorphism
$
(\SZ_k\setminus\SZ_{k-1})_+\cong T(\xi_{D,k}),
$
where $T(\xi_{D,k})$ denotes the Thom space of
%one-point compactification of
$\xi_{D,k}$.
%%%%%%%
Since $(2N(D)+k-s-1)-l_{D,k}
=
2rk-s,$
%$$
%%%%%
by using the Thom isomorphism 
there is a natural isomorphism 
%%%
$
E^{1;d}_{k,s}
\cong 
\tilde{H}^{2nd+k-s-1}(T(\xi_{d,k}),\Z)
\cong
\tilde{H}^{2rk-s}_c(C_{k;\Sigma},\pm \Z).
$
\end{proof}
%%%(End of proof of Lemma 6.5)%%%%%%
%%

%%%(Definition 6.6)%%%
\begin{dfn}
%%%%%%%%%%%%%%%%%%%%%%
{\rm
For  an $r$-tuple 
$E=(e_1,\cdots ,e_r)\in (\Z_{\geq 1})^r$
of positive integers,
let $N(E)$ denote the positive integer
$N(E)=\sum_{k=1}^re_k$ and let
$U_E=\{w\in \C:\mbox{Re}(w)<N(E)\}$
as in Definition \ref{dfn: stabilization etc}.
\qed
%%%
}
%%%%%%
\end{dfn}
%%%(End of Definition 6.6)%%

%%
Consider
the stabilization map
$s_D:\Hol_d^*(S^2,\XS)\to \Hol^*_{D+\textit{\textbf{a}} }(S^2,\XS)$
of (\ref{eq: sD}).
%%%
%%
It is easy to see that it extends to an open embedding
%%
%%%(6.10)%%
\begin{equation}\label{equ: sssd}
%%%%%%%%%
\overline{s}_D:\C^{N(\textit{\textbf{a}})}\times 
\Hol_D^*(S^2,\XS)\to
\Hol_{D+\textit{\textbf{a}}}^*(S^2,\XS).
\end{equation}
%%%%%%%% 
%%%
Moreover, it also naturally extends to an open embedding
$\tilde{s}_D:\P^D\to \P^{D+\textit{\textbf{a}}}$ and  by  restriction  we obtain an open embedding
%%%%
%%%()%%
%\begin{equation}\label{equ: open embedding}
%%%%%%%
$\tilde{s}_D:\C^{N(\textit{\textbf{a}})}\times \Sigma_D\to 
\Sigma_{D+\textit{\textbf{a}}}.$
%\end{equation}
%%%
Since one-point compactification is contravariant for open embeddings,
this map induces a map
$\tilde{s}_{D+}:(\Sigma_{D+\textit{\textbf{a}}})_+
\to
(\C^{N(\textit{\textbf{a}})}\times \Sigma_D)_+=S^{2N(\textit{\textbf{a}})}\wedge \Sigma_{D+}.$
\par
Note that there is a commutative diagram
%%%
%%%%%(6.11)%%
\begin{equation}\label{eq: diagram}
%%%%%%%%%%%%%
\begin{CD}
\tilde{H}_k(\Hol_D^*(S^2,\XS),\Z) @>{s_D}_*>>
\tilde{H}_k(\Hol_{D+\textit{\textbf{a}}}^*(S^2,\XS),\Z)
\\
@V{Al}V{\cong}V @V{Al}V{\cong}V
\\
\tilde{H}^{2N(D)-k-1}_c(\Sigma_D,\Z)
@>{\tilde{s}_{D+}}^{\ *}>>
\tilde{H}^{2(N(D)+N(\textit{\textbf{a}}))-k-1}_c(\Sigma_{D+\textit{\textbf{a}}},\Z)
\end{CD}
\end{equation}
%%%% 
where $Al$ is the Alexander duality isomorphism and
 ${\tilde{s}_{D+}}^{\ *}$ denotes the composite of 
%homomorphisms 
the suspension isomorphism with the homomorphism
${(\tilde{s}_{D+})^*}$,
$$
%\begin{CD}
\tilde{H}^{M}_c(\Sigma_D,\Z)
\stackrel{\cong}{\rightarrow}
\tilde{H}^{M+2N(\textit{\textbf{a}})}_c
(\C^{N(\textit{\textbf{a}})}\times \Sigma_D,\Z)
%\stackrel{(\tilde{s}_{d+})^*}{\longrightarrow}
\stackrel{(\tilde{s}_{D+})^*}{\longrightarrow}
\tilde{H}^{M+2N(\textit{\textbf{a}})}_c(\Sigma_{D+\textit{\textbf{a}}},\Z),
%\end{CD}
$$
where $M=2N(D)-k-1$.
%%%%%
By the universality of the non-degenerate simplicial resolution
(\cite[pages 286-287]{Mo2}), 
the map $\tilde{s}_D$ also naturally extends to a filtration preserving open embedding
%%%()%%%%%
%\begin{equation}\label{equ: flitr-preserve map}
$\tilde{s}_D:\C^{N(\textit{\textbf{a}})} \times \SZ \to \SZd$
%\end{equation}
%%%%%%%%%%%%%%
between non-degenerate simplicial resolutions.
This  induces a filtration preserving map
$(\tilde{s}_D)_+:\SZd_+\to (\C^{N(\textit{\textbf{a}})} \times \SZ)_+
=S^{2N(\textit{\textbf{a}})}\wedge \SZ_+$,
and thus a homomorphism of spectral sequences
%%%
%%%(6.12)%%%%
\begin{equation}\label{equ: theta1}
%%%%%%%%%%%%
\{ \tilde{\theta}_{k,s}^t:E^{t;D}_{k,s}\to 
E^{t;D+\textit{\textbf{a}}}_{k,s}\},
\quad \mbox{where}
%%%%%
\end{equation}
%%%
%%%%%%%%
\begin{eqnarray*}
%%%%%%%
\big\{E^{t;D}_{k,s}, \tilde{d}^{t}:
E^{t;D}_{k,s}\to E^{t;D}_{k+t,s+t-1}
\big\}
\quad &\Rightarrow &
 H_{s-k}(\Hol_{D}^*(S^2,\XS),\Z),
\\
\big\{E^{t;D+\textit{\textbf{a}}}_{k,s}, \tilde{d}^{t}:
E^{t;D+\textit{\textbf{a}}}_{k,s}\to 
E^{t;D+\textit{\textbf{a}}}_{k+t,s+t-1}
\big\}
\quad&\Rightarrow& 
 H_{s-k}(\Hol_{D+\textit{\textbf{a}}}^*(S^2,\XS),\Z),
%%%
\end{eqnarray*}
%%%
$$
E^{1;D}_{k,s}=
\tilde{H}_c^{2N(D)+k-1-s}(\SZ_k\setminus \SZ_{k-1}),
E^{1;D+\textit{\textbf{a}}}_{k,s}
=
\tilde{H}_c^{2N(D+\textit{\textbf{a}})+k-1-s}
(\mathcal{X}_k^{D+\textit{\textbf{a}}}\setminus 
\mathcal{X}_{k-1}^{D+\textit{\textbf{a}}}).
$$

%%%
%%%
%%%(Lemma 6.7)%%%
\begin{lmm}\label{lmm: E1}
%%%%%%%%%%%%%%%%%
If $0\leq k\leq d_{\rm min}$, 
$\tilde{\theta}^1_{k,s}:E^{1;D}_{k,s}\to 
E^{1;D+\textit{\textbf{a}}}_{k,s}$ is
an isomorphism for any $s$.
\end{lmm}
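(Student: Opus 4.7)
The plan is to reduce both $E^1$-terms to the cohomology of the common base $C_{k;\Sigma}$ by exploiting the affine bundle structure of Lemma \ref{lemma: vector bundle*}, and then to show that the stabilization map induces the identity under the resulting Thom and suspension isomorphisms.

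First, since $0\le k\le \dmin$, one also has $k\le (d_i+a_i)$ for all $i$, so Lemma \ref{lemma: vector bundle*} applies to both $D$ and $D+\textit{\textbf{a}}$. This yields affine bundles
$\xi_{D,k}:\SZ_k\setminus\SZ_{k-1}\to C_{k;\Sigma}$ of rank $l_{D,k}=2N(D)-2rk+k-1$
and $\xi_{D+\textit{\textbf{a}},k}:\SZd_k\setminus\SZd_{k-1}\to C_{k;\Sigma}$ of rank $l_{D+\textit{\textbf{a}},k}=l_{D,k}+2N(\textit{\textbf{a}})$, over the same base. By Lemma \ref{lemma: E11}, both $E^{1;D}_{k,s}$ and $E^{1;D+\textit{\textbf{a}}}_{k,s}$ are canonically isomorphic to $\tilde H^{2rk-s}_c(C_{k;\Sigma},\pm\Z)$.

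Next I would examine what $\tilde s_D:\C^{N(\textit{\textbf{a}})}\times \SZ\to\SZd$ does on the $k$-th stratum. By the universality of the non-degenerate simplicial resolution, $\tilde s_D$ is filtration-preserving, so it restricts to a map $\C^{N(\textit{\textbf{a}})}\times(\SZ_k\setminus\SZ_{k-1})\to \SZd_k\setminus\SZd_{k-1}$. The projections $\pi_{k,D}$ and $\pi_{k,D+\textit{\textbf{a}}}$ record only the labelled configuration of pairs $(x_j,F(x_j))\in\C\times L(\Sigma)$. Since the stabilization acts by multiplying each $f_i$ by a polynomial whose roots are chosen in $U_{D+\textit{\textbf{a}}}\setminus U_D$, disjoint from the support of any $\pi_{k,D}$-image, the configuration $\{(x_j,F(x_j))\}_{j=1}^k$ is preserved. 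Hence the restriction of $\tilde s_D$ is a map of affine bundles over $C_{k;\Sigma}$ covering the identity on the base.

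Fibrewise, this map is an affine embedding of codimension $2N(\textit{\textbf{a}})$ whose normal bundle is the trivial $\C^{N(\textit{\textbf{a}})}$-factor; equivalently, $\xi_{D+\textit{\textbf{a}},k}\cong \xi_{D,k}\oplus\underline{\C^{N(\textit{\textbf{a}})}}$ as affine bundles over $C_{k;\Sigma}$. Under the Thom isomorphisms for $\xi_{D,k}$ and $\xi_{D+\textit{\textbf{a}},k}$, combined with the $2N(\textit{\textbf{a}})$-fold suspension isomorphism appearing in the definition (\ref{equ: theta1}) of $\tilde\theta^1_{k,s}$, the homomorphism becomes the identity on $\tilde H^{2rk-s}_c(C_{k;\Sigma},\pm\Z)$. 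The main obstacle is the bookkeeping of orientation twists: one must verify that the Thom class of $\xi_{D+\textit{\textbf{a}},k}$ is the external product of the Thom class of $\xi_{D,k}$ with the standard orientation class of $\C^{N(\textit{\textbf{a}})}$, which follows from the linearity of stabilization in the polynomial coefficients. Granting this compatibility, $\tilde\theta^1_{k,s}$ is an isomorphism for all $s$.
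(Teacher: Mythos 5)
Your argument is essentially the paper's own proof: both identify the strata $\SZ_k\setminus\SZ_{k-1}$ and $\SZd_k\setminus\SZd_{k-1}$ as affine bundles over the common base $C_{k;\Sigma}$ via Lemma \ref{lemma: vector bundle*}, observe that the filtration-preserving stabilization map covers the identity on $C_{k;\Sigma}$, and conclude via the Thom isomorphisms of Lemma \ref{lemma: E11} (together with the suspension isomorphism built into the definition of $\tilde\theta^1_{k,s}$) that the map is the identity on $\tilde H^{2rk-s}_c(C_{k;\Sigma},\pm\Z)$. Your additional care with the orientation twists and the splitting $\xi_{D+\textit{\textbf{a}},k}\cong\xi_{D,k}\oplus\underline{\C^{N(\textit{\textbf{a}})}}$ is a correct elaboration of what the paper leaves implicit.
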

%%%%%%%%%%%%%%%%
\begin{proof}
%%%%%%%%%%%%%%%%
Since the case $k=0$ is clear,
suppose that $1\leq k\leq d_{\rm min}$.
It follows from the proof of Lemma \ref{lemma: vector bundle*}
that there is a homotopy commutative diagram of affine vector bundles
$$
\begin{CD}
%\C^n\times 
\C^n\times (\SZ_k\setminus\SZ_{k-1}) @>>> C_{k;\Sigma}
\\
@VVV \Vert @.
\\
\SZd_k\setminus \SZd_{k-1} @>>> C_{k;\Sigma}
\end{CD}
$$
Hence, %by the naturality of Thom isomorphisms 
we have 
a commutative diagram
$$
\begin{CD}
E^{1,D}_{k,s} @>>\cong> \tilde{H}^{2rk-s}_c(C_{k;\Sigma},\pm \Z)
\\
@V{\tilde{\theta}_{k,s}^1}VV \Vert @.
\\
E^{1,D+\textit{\textbf{a}}}_{k,s} @>>\cong> \tilde{H}^{2rk-s}_c(C_{k;\Sigma},\pm \Z)
\end{CD}
$$
%where %$r_{\rm min}=r_{\rm min}(I)$ and 
%$T$ denotes the Thom isomorphism.
and the assertion follows.
%%%%%%%%
\end{proof}
%%(End of proof of Lemma 6.7)%%%%
%%%%%%%%%%%%%%%
%%

Now we consider the spectral sequences induced by the 
truncated simplicial resolutions.

%%%(Definition 6.8)%%%
\begin{dfn}
%%%%%%%%%%%%%%%%%%%%%%
{\rm
Let $X^{\Delta}$ denote the truncated 
(after the $d_{\rm min}$-th term) simplicial resolution of $\Sigma_D$
with the natural filtration
$
\emptyset =X^{\Delta}_0\subset
X^{\Delta}_1\subset \cdots\subset
X^{\Delta}_{d_{\rm min}}\subset X^{\Delta}_{d_{\rm min}+1}=
X^{\Delta}_{d_{\rm min}+2}=\cdots =X^{\Delta},
$
where $X^{\Delta}_k=\SZ_k$ if $k\leq d_{\rm min}$ and 
$X^{\Delta}_k=X^{\Delta}(d_{\rm min})$ if $k\geq d_{\rm min}+1$.
%%%%%%
\par\vspace{1mm}\par
%%%%
Similarly,
let $Y^{\Delta}$ denote  truncated (after the $d_{\rm min}$-th term) simplicial resolution of 
$\Sigma_{D+\textit{\textbf{a}}}$
with the natural filtration
$
\emptyset =Y^{\Delta}_0\subset
Y^{\Delta}_1\subset \cdots\subset
Y^{\Delta}_{d_{\rm min}}\subset Y^{\Delta}_{d_{\rm min}+1}
=Y^{\Delta}_{d_{\rm min}+2}=\cdots =Y^{\Delta},
$
where $Y^{\Delta}_k=\SZd_k$ if $k\leq d_{\rm min}$ and 
$Y^{\Delta}_k=Y^{\Delta}$ if $k\geq d_{\rm min}+1$.
\qed
%%%%%%%
}
%%%%%%%
\end{dfn}
%%%%%%(End of Definition 6.8)%%%%

%%%%
By using Lemma \ref{Lemma: truncated} and the same method
as in \cite[\S 2 and \S 3]{Mo3} (cf. \cite[Lemma 2.2]{KY4}), 
we obtain the following {\it  truncated spectral sequences}
%%%%()%%%%%%%%
\begin{eqnarray*}\label{equ: spectral sequ2}
%%%%%%%%%%%%%%%%
\big\{E^{t}_{k,s}, d^{t}:E^{t}_{k,s}\to 
E^{t}_{k+t,s+t-1}
\big\}
&\Rightarrow& H_{s-k}(\Hol_{D}^*(S^2,\XS),\Z),
%%%
\\
%%%
\big\{\ 
^{\p}E^{t}_{k,s},\  d^{t}:\ ^{\p}E^{t}_{k,s}\to 
\  ^{\p}E^{t}_{k+t,s+t-1}
\big\}
&\Rightarrow& H_{s-k}(\Hol_{D+\textit{\textbf{a}}}^*(S^2,\XS),\Z),
%%%%
\end{eqnarray*}
%%%
$$
E^{1}_{k,s}=\ \tilde{H}_c^{2N(D)+k-1-s}(X^{\Delta}_k\setminus X^{\Delta}_{k-1}),
\ 
^{\p}E^{1}_{k,s}=\ \tilde{H}_c^{2(N(D)+N(\textit{\textbf{a}}))+k-1-s}(Y^{\Delta}_k\setminus Y^{\Delta}_{k-1}).
$$
%%%
By the naturality of truncated simplicial resolutions,
the filtration preserving map
$\tilde{s}_D:\C^{N(\textit{\textbf{a}})}\times \SZ \to \SZd$
  gives rise to a natural filtration preserving map
%%()%%
%\begin{equation}
%%%%%%%%
$\tilde{s}_D^{\p}:\C^{N(\textit{\textbf{a}})}\times X^{\Delta} \to Y^{\Delta}$
%\end{equation}
%%%%%%%% 
which, in a way analogous to  (\ref{equ: theta1}), induces
a homomorphism of spectral sequences 
%%%%%%%%%%%
%%%(6.13)%%%%
\begin{equation}\label{equ: theta2}
%%%%%%%%%%%%
\{ \theta_{k,s}^t:E^{t}_{k,s}\to \ ^{\p}E^{t}_{k,s}\}.
\end{equation}
%%%%%%%%%%

%%%%
%%%%(Lemma 6.9)%%
\begin{lmm}\label{lmm: Ed}
%%%%%%%%%%%%%%%%%
%Let $r_{\rm min}=r_{\rm min}(I)$, and let $\epsilon \in\{0,1\}$.
%%%%%%%%
\begin{enumerate}
%%(i)%%%
\item[$\I$]
If $k<0$ or $k\geq \dmin+2$,
$E^1_{k,s}=\ ^{\p}E^1_{k,s}=0$ for any $s$.
%%(ii)%%
\item[$\II$]
$E^1_{0,0}=\ ^{\p}E^1_{0,0}=\Z$ and $E^1_{0,s}=\ ^{\p}E^1_{0,s}=0$ if $s\not= 0$.
%%(iii)%%%
\item[$\III$]
If $1\leq k\leq d_{\rm min}$, there are  
isomorphisms
$
E^1_{k,s}\cong \ ^{\p}E^1_{k,s}\cong \tilde{H}^{2rk-s}_c(C_{k;\Sigma},\pm \Z).
$
%%(iv)%%%
\item[$\IV$]
$E^1_{d_{\rm min}+1,s}=\ ^{\p}E^1_{d_{\rm min}+1,s}=0$ for any 
$s\leq (2\rmin (\Sigma)-2)d_{\rm min}-1$.
%%%%%%%%%%
\end{enumerate}
%%%%%%
%%%%%
\end{lmm}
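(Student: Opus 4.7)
My plan is to treat (I)--(III) as essentially formal consequences of the construction of the truncated simplicial resolution combined with Lemma \ref{lemma: vector bundle*}, and to establish (IV) via a dimension count on the top stratum. I expect (IV) to be the only genuinely substantive step.

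For (I), the truncated filtration satisfies $X^{\Delta}_k = \emptyset$ for $k\leq 0$ and $X^{\Delta}_k = X^{\Delta}(\dmin)$ for all $k\geq \dmin+1$, so $X^{\Delta}_k\setminus X^{\Delta}_{k-1} = \emptyset$ outside the range $1\leq k\leq \dmin+1$; the same reasoning applies to $Y^{\Delta}$. Statement (II) is the standard initial entry of the Vassiliev-type spectral sequence (in the conventions of \cite{Mo3}, cf.\ \cite[Lemma 2.2]{KY4}): the generator of $E^1_{0,0}=\Z$ corresponds under the abutment to the class of the path-connected target $\Hol_D^*(S^2,\XS)$ of Definition \ref{dfn: holomorphic}. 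For (III), since $1\leq k\leq \dmin$ the truncation is inactive and $X^{\Delta}_k\setminus X^{\Delta}_{k-1} = \SZ_k\setminus \SZ_{k-1}$; Lemma \ref{lemma: vector bundle*} identifies this with the total space of a real affine bundle $\xi_{D,k}$ of rank $l_{D,k} = 2N(D)-2rk+k-1$ over $C_{k;\Sigma}$, and the twisted Thom isomorphism yields
\begin{equation*}
E^1_{k,s} \;\cong\; \tilde{H}^{2N(D)+k-1-s-l_{D,k}}_c(C_{k;\Sigma},\pm\Z) \;=\; \tilde{H}^{2rk-s}_c(C_{k;\Sigma},\pm\Z),
\end{equation*}
with the same calculation applying verbatim to $^{\p}E^1_{k,s}$ using $\SZd$.

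For (IV), the plan is to bound $\dim_{\R}(X^{\Delta}_{\dmin+1}\setminus X^{\Delta}_{\dmin})$ and use that compactly supported cohomology of a semi-algebraic set vanishes strictly above its real dimension. By the construction of the truncation, this stratum fibres over $Y(\dmin)$ with open-cone fibres of real dimension $\dmin$. To bound $Y(\dmin)$, I would parameterize by tuples $(F;x_1,\cdots,x_{\dmin+1})$ with $F\in\P^D$ and distinct $x_i\in\C$ satisfying $F(x_i)\in L(\Sigma)$. Since $L(\Sigma) = \bigcup_{\sigma\in Pr(\Sigma)}L_\sigma$ has complex codimension $\rmin(\Sigma)$ in $\C^r$, and the affine evaluation map $F\mapsto(F(x_1),\cdots,F(x_{\dmin+1}))$ is generically surjective up to a bounded Vandermonde-type defect (as in the proof of Lemma \ref{lemma: vector bundle*}), the $\dmin+1$ constraints together contribute real codimension $2\rmin(\Sigma)(\dmin+1)$, yielding
\begin{equation*}
\dim_{\R} Y(\dmin) \;\leq\; 2N(D)+2(\dmin+1)-2\rmin(\Sigma)(\dmin+1) \;=\; 2N(D)-2(\dmin+1)(\rmin(\Sigma)-1).
\end{equation*}
Combined with the $\dmin$-dimensional fibre contribution, $E^1_{\dmin+1,s}=0$ whenever the cohomological degree $2N(D)+\dmin-s$ strictly exceeds $2N(D)+\dmin-2(\dmin+1)(\rmin(\Sigma)-1)$, i.e.\ whenever $s<2(\dmin+1)(\rmin(\Sigma)-1)$. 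Since no single primitive generator $\textit{\textbf{n}}_k$ can itself form a primitive collection (because $\rho_k\in\Sigma$), we always have $\rmin(\Sigma)\geq 2$, so the hypothesis $s\leq(2\rmin(\Sigma)-2)\dmin-1$ does imply $s<2(\dmin+1)(\rmin(\Sigma)-1)$; the identical argument handles $^{\p}E^1_{\dmin+1,s}$.

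The main obstacle is verifying the generic-transversality claim underlying the dimension bound for $Y(\dmin)$, namely that the $\dmin+1$ evaluation constraints on $\P^D$ genuinely impose the full expected codimension $2\rmin(\Sigma)(\dmin+1)$ rather than degenerating on some higher-dimensional locus of $(x_1,\cdots,x_{\dmin+1})$. This is expected to follow from the same Vandermonde analysis used in the proof of Lemma \ref{lemma: vector bundle*}, with any residual rank defect (of order $|\{k:d_k=\dmin\}|$) being absorbed by the parameter dimension $2(\dmin+1)$ associated to the free choice of $(x_1,\cdots,x_{\dmin+1})$.
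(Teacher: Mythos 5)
Your handling of (I)--(III) coincides with the paper's: the truncated filtration is constant outside $0\leq k\leq \dmin+1$, the entry $E^1_{0,0}=\Z$ is the conventional correction coming from Alexander duality between $\tilde{H}_*$ and $H_*$ of the path-connected space $\Hol_D^*(S^2,\XS)$, and for $1\leq k\leq \dmin$ the truncation is inactive, so Lemma \ref{lemma: vector bundle*} plus the twisted Thom isomorphism gives (III) exactly as in Lemma \ref{lemma: E11}. For (IV) you take a genuinely different route: the paper disposes of the cone stratum in one line by citing \cite[Lemma 2.1]{Mo3}, which gives $\dim (X^{\Delta}_{\dmin+1}\setminus X^{\Delta}_{\dmin})=\dim(\SZ_{\dmin}\setminus\SZ_{\dmin-1})+1=2N(D)+3\dmin-2\rmin(\Sigma)\dmin$, and then reads off the vanishing range from the vanishing of $\tilde{H}^m_c$ above the dimension; you instead bound $\dim Y(\dmin)$ directly through the incidence correspondence of pairs $(F;x_1,\cdots,x_{\dmin+1})$ and add $\dmin$ for the open cone fibres. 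Your route is more self-contained (no appeal to Mostovoy's dimension lemma) and in fact gives the slightly larger range $s<2(\rmin(\Sigma)-1)(\dmin+1)$ versus the paper's $s<2(\rmin(\Sigma)-1)\dmin$, though the surplus is not needed. The transversality issue you flag as the main obstacle resolves more cleanly than your closing sentence suggests: since $L(\Sigma)=\bigcup_{\sigma\in Pr(\Sigma)}L_{\sigma}$ is a finite union of coordinate subspaces with $\vert\sigma\vert\geq\rmin(\Sigma)$, the condition $F(x_i)\in L_{\sigma_i}$ is a pure vanishing condition, and for each index $j$ the requirement that the monic polynomial $f_j$ vanish at the $m_j=\vert\{i:j\in\sigma_i\}\vert$ prescribed distinct points either imposes exactly $m_j$ independent linear conditions (Vandermonde, when $m_j\leq d_j$) or forces that stratum to be empty (when $m_j>d_j$). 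Hence there is no residual rank defect to ``absorb'': every nonempty stratum has real codimension $2\sum_i\vert\sigma_i\vert\geq 2\rmin(\Sigma)(\dmin+1)$ in the product of $\P^D$ with the configuration space of $\dmin+1$ distinct points, which is precisely the bound you need, and taking the finite union over choices of the $\sigma_i$ and passing to the closure does not change the dimension.
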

%%%%%%
\begin{proof}
%%%%%
Let us write $\rmin =\rmin (\Sigma)$.
Since the proofs of both cases are identical,  it suffices to prove the assertions for the case
$E^1_{k,s}$.
Since $X^{\Delta}_k=\SZ_k$ for any $k\geq d_{\rm min}+2$,
the assertions (i) and (ii) are clearly true.
Since $X^{\Delta}_k=\SZ_k$ for any $k\leq d_{\rm min}$,
(iii) easily follows from Lemma \ref{lemma: E11}.
Thus it remains to prove (iv).
%%%%
By Lemma \cite[Lemma 2.1]{Mo3},
%%%%%%%
\begin{align*}
%%%%%%
\dim (X^{\Delta}_{d_{\rm min}+1}\setminus 
X^{\Delta}_{d_{\rm min}})
&=
\dim (\SZ_{d_{\rm min}}\setminus \SZ_{d_{\rm min}-1})+1
=(l_{D,d_{\rm min}}+\dim C_{d_{\rm min};\Sigma})+1
\\
&=2N(D)+3d_{\rm min}-2\rmin d_{\rm min}.
\end{align*}
%%%
Since 
$E^1_{d_{\rm min}+1,s}=\tilde{H}_c^{2N(D)+d_{\rm min}-s}
(X^{\Delta}_{d_{\rm min}+1}\setminus X^{\Delta}_{d_{\rm min}},\Z)$
and
$2N(D)+d_{\rm min}-s>\dim (X^{\Delta}_{d+1}\setminus X^{\Delta}_d)$
$\Leftrightarrow$
$s\leq (2\rmin -2)d_{\rm min}-1$,
we see that
$E^1_{d_{\rm min}+1,s}=0$ for any $s\leq (2\rmin -2)d_{\rm min}-1$.
%%%%
\end{proof}
%%%(End of proof of Lemma 5.9)%%%%%%%
%A similar computation also proves the following.
%%%%

%%%(Lemma 6.10)%%
\begin{lmm}\label{lmm: E2}
%%%%%%%%%%%%%%%%%
If $0\leq k\leq d_{\rm min}$, $\theta^1_{k,s}:E^{1}_{k,s}\to \ ^{\p}E^{1}_{k,s}$ is
an isomorphism for any $s$.
\end{lmm}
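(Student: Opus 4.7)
The plan is to reduce the claim to Lemma \ref{lmm: E1}, which already handles the analogous statement for the (non-truncated) Vassiliev spectral sequences $\{E^{t;D}_{k,s}\}$ and $\{E^{t;D+\textit{\textbf{a}}}_{k,s}\}$. The key observation is that for indices $0\leq k\leq d_{\rm min}$ the truncated filtration has not yet been modified, so nothing has changed.

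First, I would simply invoke the definitions of the filtrations on $X^{\Delta}$ and $Y^{\Delta}$: by construction, $X^{\Delta}_k=\SZ_k$ and $Y^{\Delta}_k=\SZd_k$ for every $k\leq d_{\rm min}$. Consequently, for $1\leq k\leq d_{\rm min}$ the quotient pairs agree, which yields the identifications
\begin{equation*}
E^1_{k,s}\ =\ \tilde{H}_c^{2N(D)+k-1-s}(\SZ_k\setminus\SZ_{k-1})\ =\ E^{1;D}_{k,s},
\end{equation*}
and similarly $\ ^{\p}E^1_{k,s}=E^{1;D+\textit{\textbf{a}}}_{k,s}$.

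Next, by the naturality of the truncation construction, the filtration preserving map $\tilde{s}_D^{\p}:\C^{N(\textit{\textbf{a}})}\times X^{\Delta}\to Y^{\Delta}$ agrees with $\tilde{s}_D$ on the subspaces $X^{\Delta}_k=\SZ_k$ for $k\leq d_{\rm min}$. Hence the induced map $\theta^1_{k,s}$ on $E^1$-terms coincides, under the identifications above, with $\tilde{\theta}^1_{k,s}$ from (\ref{equ: theta1}). Lemma \ref{lmm: E1} then directly gives that $\theta^1_{k,s}$ is an isomorphism for each $1\leq k\leq d_{\rm min}$ and every $s$.

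Finally, the case $k=0$ is immediate from (ii) of Lemma \ref{lmm: Ed}: both $E^1_{0,s}$ and $\ ^{\p}E^1_{0,s}$ vanish unless $s=0$, in which case both are canonically $\Z$, and $\theta^1_{0,0}$ is easily checked to be the identity. No genuine obstacle is expected here; the only thing to be careful about is to spell out the compatibility between $\theta$ and $\tilde{\theta}$ arising from the universality/naturality of the non-degenerate simplicial resolution and its truncation, which was already used implicitly in the constructions of (\ref{equ: theta1}) and (\ref{equ: theta2}).
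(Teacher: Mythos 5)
Your proposal is correct and follows exactly the paper's argument: the paper's proof is the one-line observation that $(X^{\Delta}_k,Y^{\Delta}_k)=(\SZ_k,\SZd_k)$ for $k\leq d_{\rm min}$, so the claim reduces to Lemma \ref{lmm: E1}. Your additional remarks on the compatibility of $\theta$ with $\tilde{\theta}$ and the $k=0$ case just make explicit what the paper leaves implicit.
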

%%%%%%%%%%%%%%%%
\begin{proof}
Since $(X^{\Delta}_k,Y^{\Delta}_k)=(\SZ_k,\SZd_k)$ for $k\leq \dmin$,
the assertion follows from Lemma \ref{lmm: E1}.
\end{proof}
%%%%%%%%%%%%%%
%%

%%%(SECTION 7)%%%
\section{The proofs of the main results}\label{section: proofs}
%%%%%%%%%%%%%

In this section we prove Theorem \ref{thm: I} and
Corollary \ref{crl: I}.
For this purpose,  from now on we always assume that
 $\XS$ is a smooth toric variety such that 
the conditions $($\ref{equ: homogenous}.1$)$ and 
$($\ref{equ: homogenous}.2$)$ are satisfied.
First we prove the following key unstable result.

%%%%%(Theorem 7.1)%%
\begin{thm}\label{thm: III}
%%%%%%%%%%%%%%%%%%%
The stabilization map
$s_D:\Hol_D^*(S^2,\XS)\to \Hol_{D+\textit{\textbf{a}}}^*(S^2,\XS)$
is a homology equivalence through dimension
$d(D,\Sigma)=(2\rmin (\Sigma )-3)\dmin -2$.
%%%%%%%%%%%%%%%%%
\end{thm}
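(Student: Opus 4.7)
\medskip
\noindent\textbf{Proof plan.}
The strategy is to compare the two truncated Vassiliev spectral sequences built in Section \ref{section: spectral sequence} via the homomorphism of spectral sequences
$$
\theta^t_{k,s}:E^t_{k,s}\longrightarrow\ ^{\prime}E^t_{k,s}
$$
induced by the filtration preserving map
$\tilde{s}_D^{\prime}:\C^{N(\textit{\textbf{a}})}\times X^{\Delta}\to Y^{\Delta}$
and converging to
$H_{s-k}(\Hol^*_D(S^2,\XS),\Z)\to H_{s-k}(\Hol^*_{D+\textit{\textbf{a}}}(S^2,\XS),\Z)$.
The goal is to show that $\theta^{\infty}_{k,s}$ is an isomorphism for every $(k,s)$ with
$s-k\leq d(D,\Sigma)=(2\rmin(\Sigma)-3)\dmin-2$, which yields the desired homology equivalence through dimension $d(D,\Sigma)$.

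\medskip
\noindent\textbf{Step 1.} Combine Lemmas \ref{lmm: Ed} and \ref{lmm: E2} to describe $\theta^1$ on each column. For $0\leq k\leq \dmin$, $\theta^1_{k,s}$ is an isomorphism by Lemma \ref{lmm: E2}. For $k\geq \dmin+2$ both sides vanish by Lemma \ref{lmm: Ed}(i), so $\theta^1_{k,s}$ is trivially an isomorphism. The only non-routine column is $k=\dmin+1$, where Lemma \ref{lmm: Ed}(iv) tells us that both $E^1_{\dmin+1,s}$ and $^{\prime}E^1_{\dmin+1,s}$ vanish for $s\leq (2\rmin-2)\dmin-1$.

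\medskip
\noindent\textbf{Step 2.} Check the range numerically. If $s-k\leq d(D,\Sigma)$ and $k=\dmin+1$, then
$$
s\leq d(D,\Sigma)+\dmin+1=(2\rmin-3)\dmin-2+\dmin+1=(2\rmin-2)\dmin-1,
$$
which is exactly the vanishing range of Lemma \ref{lmm: Ed}(iv). Therefore, for every $(k,s)$ with $s-k\leq d(D,\Sigma)$, $\theta^1_{k,s}$ is an isomorphism.

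\medskip
\noindent\textbf{Step 3.} Propagate this isomorphism to $E^{\infty}$ by induction on the page $t$. The differential $d^t:E^t_{k,s}\to E^t_{k+t,s+t-1}$ decreases the diagonal $s-k$ by $1$, so the target of any outgoing differential from a point $(k',s')$ with $s'-k'\leq d(D,\Sigma)$ still lies in the isomorphism range. For an incoming differential at $(k',s')$ with $s'-k'\leq d(D,\Sigma)$, the source has diagonal $s'-k'+1$; if that source is in the column $k\leq \dmin$ then $\theta^1$ is iso there, and if it is in the column $k=\dmin+1$ the target $(k',s')$ forces $k'\geq \dmin+2$, hence $E^1_{k',s'}=0$ already, so the incoming differential is irrelevant. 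The usual five-lemma argument on each page therefore gives $\theta^{t+1}_{k',s'}$ iso from $\theta^{t}$ iso, and since the spectral sequences are first-quadrant-type (finite support in each total degree by Lemma \ref{lmm: Ed}), one has $\theta^{\infty}_{k',s'}$ iso for $s'-k'\leq d(D,\Sigma)$. Passing to the associated graded of the filtration on homology completes the proof.

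\medskip
\noindent\textbf{Main obstacle.} The only delicate point is the boundary column $k=\dmin+1$: the $E^1$-isomorphism coming from Lemma \ref{lmm: E2} does not extend there, and everything hinges on the numerical coincidence in Step 2, namely that the vanishing bound $(2\rmin-2)\dmin-1$ supplied by Lemma \ref{lmm: Ed}(iv) exactly matches the edge of the stable diagonal $s-k=d(D,\Sigma)$. This is essentially why the stability dimension takes the specific form $(2\rmin(\Sigma)-3)\dmin-2$.
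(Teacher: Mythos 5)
Your proposal takes the same route as the paper: compare the two truncated Vassiliev spectral sequences via $\theta^t_{k,s}$, use Lemmas \ref{lmm: Ed} and \ref{lmm: E2} to control $E^1$, and exploit the numerical coincidence that on the diagonal $s-k\leq d(D,\Sigma)$ the column $k=\dmin+1$ lies entirely inside the vanishing range of Lemma \ref{lmm: Ed}(iv). Steps 1 and 2 reproduce exactly the paper's observation $(*)_1$ and the arithmetic behind it.

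The weak point is the induction in Step 3. Your inductive region is $\{(k,s): s-k\leq d(D,\Sigma)\}$, but this region is not closed under passing to the source $(k'-t,\,s'-t+1)$ of an incoming differential, which sits on the diagonal $s'-k'+1$, possibly equal to $d(D,\Sigma)+1$. The five-lemma step at page $t$ requires $\theta^{t}$ to be surjective at that source, and the remark that ``$\theta^1$ is iso there'' does not give this for $t\geq 2$: whether $\theta^t$ is still an isomorphism (or surjection) at the source depends on the source's own differentials at earlier pages, and those can interact with the genuinely unknown region $\mathcal{S}_1=\{(\dmin+1,s): s\geq (2\rmin(\Sigma)-2)\dmin\}$, whose minimal diagonal is only $d(D,\Sigma)+1$. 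The paper closes this by explicitly tracking how the unknown region propagates backwards along the differentials (the sets $\mathcal{S}_t$ and $A_t$): each backward step raises $s-k$ by one, forward propagation is harmless because every target out of column $\dmin+1$ lands in a column $\geq \dmin+2$ where both spectral sequences vanish, and the computation $a(t)=d(D,\Sigma)+t+1$ shows the contaminated set never reaches the diagonals $s-k\leq d(D,\Sigma)+1$ in the columns $0\leq k\leq \dmin$. You have already isolated all the facts needed for this bookkeeping (in particular that $d^t$ lowers $s-k$ by exactly one), but as written the induction does not close; you should either carry out the explicit propagation of the bad set as in the paper, or enlarge the inductive statement to assert in addition that $\theta^t$ is surjective on the diagonal $s-k=d(D,\Sigma)+1$ in the columns $k\leq\dmin$ (and injective on the diagonal $s-k=d(D,\Sigma)$), which is a recursion that does close.
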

%%%%%%%%%%%%%%%%%%
\begin{proof}
%%%%%%%%%%%%%%%%%
We write $\rmin =\rmin (\Sigma)$, and
consider the homomorphism
$\theta_{k,s}^t:E^{t}_{k,s}\to \ ^{\p}E^{t}_{k,s}$
of truncated spectral sequences given in (\ref{equ: theta2}).
%%%%
By using the commutative diagram (\ref{eq: diagram}) and the comparison theorem for spectral sequences, 
it suffices to prove that the positive integer $d(D,\Sigma)$ 
has the 
following property:
%%%
\begin{enumerate}
\item[$(*)$]
$\theta^{\infty}_{k,s}$
is  an isomorphism for all $(k,s)$ such that $s-k\leq d(D,\Sigma)$.
\end{enumerate}
%%
%%
%%(The case r<0 or r \leq d+2)%%%%%
By Lemma \ref{lmm: Ed}, 
$E^1_{k,s}=\ ^{\p}E^1_{k,s}=0$ if
$k<0$, or if $k\geq \dmin +2$, or if $k=\dmin +1$ with 
$s\leq (2\rmin -2)\dmin -1$.
Since $(2\rmin -2)\dmin -1-(\dmin+1)=(2\rmin -3)\dmin -2
=d(D,\Sigma)$,
we  see that:
%()%%
%%%%%%%
%%%%%%%
\begin{enumerate}
%%%%%%%
\item[$(*)_1$]
if $k< 0$ or $k\geq \dmin +1$,
$\theta^{\infty}_{k,s}$ is an isomorphism for all $(k,s)$ such that
$s-k\leq d(D,\Sigma)$.
\end{enumerate}
\par
Next, we assume that $0\leq k\leq \dmin$, and investigate the condition that
$\theta^{\infty}_{k,s}$  is an isomorphism.
%%%
%Then
Note that the groups $E^1_{k_1,s_1}$ and $^{\p}E^1_{k_1,s_1}$ are not known for
%%(S_1)%%%
$(u,v)\in\mathcal{S}_1=
\{(\dmin+1,s)\in\Z^2:s\geq (2\rmin -2)\dmin \}$.
%%%%
%%%%
By considering the differentials $d^1$'s of
$E^1_{k,s}$ and $^{\p}E^1_{k,s}$,
%$d^1:\E^1_{k,s}\to \E^{1}_{k+1,s}$
%and
%$d^1:\Ed^1_{k,s}\to \Ed^{1}_{k+1,s}$
and applying Lemma \ref{lmm: E2}, we see that
$\theta^2_{k,s}$ is an isomorphism if
$(k,s)\notin \mathcal{S}_1 \cup \mathcal{S}_2$, where
%%%
%%(S_2)%%%
$$
\mathcal{S}_2=
\{(u,v)\in\Z^2:(u+1,v)\in \mathcal{S}_1\}
=\{(\dmin ,v)\in \Z^2:v\geq (2\rmin -2)\dmin\}.
$$
%%%%
%%%
%%%%
A similar argument  shows that
$\theta^3_{k,s}$ is an isomorphism if
%%(S_3)%%
$(k,s)\notin \bigcup_{t=1}^3\mathcal{S}_t$, where
$\mathcal{S}_3=\{(u,v)\in\Z^2:(u+2,v+1)\in \mathcal{S}_1\cup
\mathcal{S}_2\}.$
%%%%%
%\par
%%%%
Continuing in the same fashion,
considering the differentials
$d^t$'s on $E^t_{k,s}$ and $^{\p}E^t_{k,s}$
and applying the inductive hypothesis,
%Lemma \ref{lmm: E2}, 
we  see that $\theta^{\infty}_{k,s}$ is an isomorphism
if $\dis (k,s)\notin \mathcal{S}:=\bigcup_{t\geq 1}\mathcal{S}_t
=\bigcup_{t\geq 1}A_t$,
where  $A_t$ denotes the set
%%%%(Def. of A_t)%%%%%%%
$$
A_t=
\left\{
\begin{array}{c|l}
 &\mbox{ There are positive integers }l_1,\cdots ,l_t
\mbox{ such that},
\\
(u,v)\in \Z^2 &\  1\leq l_1<l_2<\cdots <l_t,\ 
u+\sum_{j=1}^tl_j=\dmin +1,
\\
& \ v+\sum_{j=1}^t(l_j-1)\geq (2\rmin -2)\dmin
\end{array}
\right\}.
$$
%%%%%%%% 
Note that 
if this set was empty for every $t$, then, of course, the conclusion of 
Theorem \ref{thm: III} would hold in all dimensions (this is known to be false in general). 
%\par
If $\dis A_t\not= \emptyset$, it is easy to see that
%%%
$$
a(t)=\min \{s-k:(k,s)\in A_t\}=
(2\rmin -2)\dmin -(\dmin +1)+t
=d(D,\Sigma)+t+1.
$$
%%%
Hence, we obtain that
$\min \{a(t):t\geq 1,A_t\not=\emptyset\}=d(D,\Sigma)+2.$
%%%
 Since $\theta^{\infty}_{k,s}$ is an isomorphism
for any $(k,s)\notin \bigcup_{t\geq 1}A_t$ for each $0\leq k\leq \dmin$,
we have the following:
%%(*2)%%%
\begin{enumerate}
%%%%%%%%
\item[$(*)_2$]
If $0\leq k\leq \dmin$,
$\theta^{\infty}_{k,s}$ is  an isomorphism for any $(k,s)$ such that
$s-k\leq  d(D,\Sigma)+1.$
\end{enumerate}
%%%%%%
Then, by $(*)_1$ and $(*)_2$, we know that
$\theta^{\infty}_{k,s}:E^{\infty}_{k,s}\stackrel{\cong}{\rightarrow}
\ ^{\p}E^{\infty}_{k,s}$ is an isomorphism for any $(k,s)$
if $s-k\leq d(D,\Sigma)$. This completes the proof
of Theorem \ref{thm: III}.
%%%%
%%%%
\end{proof}
%%(End of proof of Theorem 7.1)%%
%
%%(Corollary 7.2)%%
\begin{crl}\label{crl: homology iso}
%%%%%%%%
The inclusion map
$i_D:\Hol_D^*(S^2,\XS)\to \Omega^2_D\XS$
is a homology equivalence through dimension
$d(D,\Sigma)$.
%%%%%%
\end{crl}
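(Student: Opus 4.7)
The plan is to factor the inclusion $i_D$ through the stabilized inclusion $i_{D+\infty}$, and then combine the stable result (Theorem \ref{thm: II}) with the unstable stabilization result (Theorem \ref{thm: III}).

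First, since $\textit{\textbf{a}} \in (\Z_{\geq 1})^r$, the quantity $d_{\min}$ can only grow under stabilization: for every $k \geq 0$ we have $\min\{d_i + k a_i\} \geq d_{\min}$, so $d(D + k\textit{\textbf{a}}, \Sigma) \geq d(D, \Sigma)$. Applying Theorem \ref{thm: III} to each stage, every iterated stabilization
$$
s_{D + k\textit{\textbf{a}}} \circ \cdots \circ s_D : \Hol_D^*(S^2, \XS) \longrightarrow \Hol_{D + (k+1)\textit{\textbf{a}}}^*(S^2, \XS)
$$
is a homology equivalence through dimension $d(D,\Sigma)$. Since singular homology commutes with sequential colimits, the canonical map
$$
j_D : \Hol_D^*(S^2, \XS) \longrightarrow \Hol_{D + \infty}^*(S^2, \XS) = \lim_{k \to \infty} \Hol_{D + k\textit{\textbf{a}}}^*(S^2, \XS)
$$
is itself a homology equivalence through dimension $d(D,\Sigma)$.

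Next, I would invoke the homotopy-commutative diagram preceding Theorem \ref{thm: II}, which identifies each $i_{D + k\textit{\textbf{a}}} \circ (s_{D + (k-1)\textit{\textbf{a}}} \circ \cdots \circ s_D)$ with $i_D$ under the standard homotopy equivalences between the different path components $\Omega^2_{D + k\textit{\textbf{a}}} \XS$ and $\Omega^2_0 \XS$. Passing to the colimit yields, up to homotopy, a factorization $i_D \simeq i_{D + \infty} \circ j_D$ under the identification $\Omega^2_D \XS \simeq \Omega^2_0 \XS$. By Theorem \ref{thm: II}, $i_{D + \infty}$ is a homotopy equivalence and hence induces isomorphisms on homology in all degrees, so composing with $j_D$ we obtain that $i_D$ is a homology equivalence through dimension $d(D,\Sigma)$.

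The only subtle point, and likely the main technical obstacle, is verifying the factorization $i_D \simeq i_{D + \infty} \circ j_D$ together with the compatibility of the path-component identifications $\Omega^2_{D + k\textit{\textbf{a}}} \XS \simeq \Omega^2_0 \XS$ as $k$ varies. However, this compatibility is built into the commutative square used to define the stabilized inclusion in Section \ref{section: stability}, so it is essentially formal once that diagram has been set up. Everything else is a direct combination of Theorems \ref{thm: II} and \ref{thm: III}.
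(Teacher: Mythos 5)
Your proposal is correct and follows essentially the same route as the paper, which simply states that the corollary ``easily follows'' from Theorems \ref{thm: II} and \ref{thm: III}; you have just spelled out the factorization $i_D \simeq i_{D+\infty}\circ j_D$ and the colimit argument that the paper leaves implicit. The one detail worth noting explicitly is that $d(D+k\textit{\textbf{a}},\Sigma)\geq d(D,\Sigma)$ because $\rmin(\Sigma)\geq 2$ forces $2\rmin(\Sigma)-3>0$, which you correctly use.
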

%%%%%%
\begin{proof}
%%%
The assertion easily follows from Theorem \ref{thm: II}
and  Theorem \ref{thm: III}.
\end{proof}
%%%%%
%%
%%
%%
%%%(Lemma 7.3)%%
\begin{lmm}\label{lmm: connectedness}
%%%%%%%%%%%%%%%
The space $\Omega^2_D\XS$ is
$2(\rmin (\Sigma)-2)$-connected.
%%%%%%%%%%%%%%
\end{lmm}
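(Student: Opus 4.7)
The plan is to reduce, by a chain of fibrations drawn from Section~\ref{section 3}, the connectivity question for $\Omega^2_D X_\Sigma$ to a connectivity statement for the moment-angle complex $\mathcal{Z}_{\KS}$, which is then settled by a homological calculation via the Hochster-type decomposition together with the skeletal condition built into the definition of $r_{\min}(\Sigma)$.

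First, since $X_\Sigma$ is path-connected and simply connected (Lemma~\ref{lmm: XS}), the space $\Omega^2 X_\Sigma$ is an H-space and all of its path components are mutually homotopy equivalent; hence $\pi_i(\Omega^2_D X_\Sigma)\cong \pi_{i+2}(X_\Sigma)$ for $i\ge 1$, while $\pi_0(\Omega^2_D X_\Sigma)=0$ by definition of a component. So the assertion reduces to showing $\pi_j(X_\Sigma)=0$ for $3\le j\le 2r_{\min}(\Sigma)-2$. The Segal-type fibration $\T^n_{\C}\to X_\Sigma\to DJ(\KS)$ of Proposition~\ref{prp: Segal fibration}, together with the asphericity of $\T^n_{\C}$ in dimensions $\ge 2$, yields $\pi_j(X_\Sigma)\cong \pi_j(DJ(\KS))$ for all $j\ge 3$. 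Similarly, the fibration $\mathcal{Z}_{\KS}\to DJ(\KS)\to (\CP^{\infty})^r$ of Lemma~\ref{Lemma: BP}(i), combined with the fact that $(\CP^\infty)^r$ has only $\pi_2\neq 0$, yields $\pi_j(DJ(\KS))\cong \pi_j(\mathcal{Z}_{\KS})$ for $j\ge 3$. It therefore suffices to show $\mathcal{Z}_{\KS}$ is $(2r_{\min}(\Sigma)-2)$-connected.

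For this last step, write $m:=r_{\min}(\Sigma)$. By the very definition of $m$, every subset of $[r]$ of cardinality $<m$ is a face of $\KS$, and hence for each $\sigma\subset[r]$ the full subcomplex $(\KS)_\sigma$ contains the $(m-2)$-skeleton of the simplex on $\sigma$. Since that skeleton is $(m-3)$-connected and the extra cells of $(\KS)_\sigma$ all have dimension $\ge m-1$, the subcomplex $(\KS)_\sigma$ is itself $(m-3)$-connected (and is a contractible simplex when $|\sigma|<m$). Inserting this vanishing into the Hochster-type decomposition
\[
\tilde{H}_k(\mathcal{Z}_{\KS})\;\cong\;\bigoplus_{\sigma\subset[r]}\tilde{H}_{k-|\sigma|-1}\!\bigl((\KS)_\sigma\bigr)
\]
(see \cite{BP}), a nonzero summand forces $|\sigma|\ge m$ and $k-|\sigma|-1\ge m-2$, hence $k\ge 2m-1$. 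Thus $\tilde{H}_k(\mathcal{Z}_{\KS})=0$ for $k\le 2m-2$, and combined with the $2$-connectedness of $\mathcal{Z}_{\KS}$ from Lemma~\ref{Lemma: BP}(i), the Hurewicz theorem promotes this homological vanishing to the required $(2m-2)$-connectedness.

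The main obstacle is this last step: one must verify that \emph{every} full subcomplex $(\KS)_\sigma$, not merely $\KS$ itself, enjoys the $(m-3)$-connectedness needed for the Hochster decomposition to vanish in the desired range, and this is exactly the point at which the minimality clause in the definition of a primitive collection is essential.
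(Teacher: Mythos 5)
Your proof is correct, but it takes a more self-contained route than the paper. The paper's own argument is short: it invokes the principal $\GS$-bundle $\GS\to U(\KS)\to\XS$ of Lemma \ref{lmm: principal}, notes that the torus fibre contributes nothing after applying $\Omega^2$, so $\Omega^2_D\XS\simeq\Omega^2 U(\KS)$, and then simply \emph{cites} \cite[Lemma 5.4 and Remark 5.5]{KOY1} for the fact that $U(\KS)$ is $2(\rmin(\Sigma)-1)$-connected. You reach the same pivot space (since $U(\KS)=\mathcal{Z}_{\KS}(\C,\C^*)\simeq\mathcal{Z}_{\KS}$ by Lemma \ref{Lemma: BP}(ii)) by a slightly longer chain of fibrations --- through $DJ(\KS)$ via Proposition \ref{prp: Segal fibration} and then Lemma \ref{Lemma: BP}(i) --- which is equally valid but costs you the extra (correct) bookkeeping that all path components of $\Omega^2\XS$ are homotopy equivalent. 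The genuine added value of your write-up is that you \emph{prove} the $(2\rmin(\Sigma)-2)$-connectivity of the moment-angle complex rather than citing it: your observation that minimality of $\rmin(\Sigma)$ forces every subset of $[r]$ of cardinality less than $\rmin(\Sigma)$ to be a face of $\KS$ (any minimal non-face is a primitive collection), hence every full subcomplex $(\KS)_\sigma$ contains the $(\rmin(\Sigma)-2)$-skeleton of a simplex and is $(\rmin(\Sigma)-3)$-connected, feeds correctly into the Hochster decomposition and gives exactly the vanishing range $k\le 2\rmin(\Sigma)-2$, which Hurewicz then upgrades using the $2$-connectivity from Lemma \ref{Lemma: BP}(i). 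So your argument makes the paper's proof independent of the external reference; the paper's version buys brevity at the cost of that dependence.
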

%%%%%%%%%%%%%%
\begin{proof}
%%%%%%%%%%%%%
Since $\XS$ is a smooth toric variety, it follows from
\cite[Prop. 6.7]{Pa1}
that the group $\GS$ acts on $U(\mathcal{K}_{\Sigma})$ freely and
there is a fibration sequence
$\T^r_{\C}\to U(\mathcal{K}_{\Sigma})\to \XS$.
Hence, there is a homotopy equivalence
$\Omega^2_D\XS\simeq \Omega^2U(\mathcal{K}_{\Sigma})$.
On the other hand, it follows from \cite[Lemma 5.4 and Remark 5.5]{KOY1}
that $U(\mathcal{K}_{\Sigma})$ is
$2(\rmin (\Sigma)-1)$-connected.
Thus, the space $\Omega^2_D\XS$ is
$2(\rmin (\Sigma)-2)$-connected.
%%%%%
\end{proof}
%%%(End of proof of Lemma 7.3)%%%
%%
%%
%%
%%
%%(Lemma 7.4)%%
\begin{lmm}\label{lmm: 1-connected}
%%%%%%%%%%%%%%%%%%%%%%%%%%%%%%%%%%%%
The space
$\Hol_D^*(S^2,\XS)$ is simply connected
if $\rmin (\Sigma)\geq 3$
and  the group
$\pi_1(\Hol_D^*(S^2,\XS))$ is an abelian group
if $\rmin (\Sigma)=2$.
%%%%%%%%%%%%%%%%%%%%%%%%%%%%%%%%%%%
\end{lmm}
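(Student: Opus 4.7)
The plan is to realize $\Hol^*_D(S^2,\XS)$ as the complement $\P^D\setminus\Sigma_D$ of the discriminant inside the affine space $\P^D\cong\C^{N(D)}$, and then bound the fundamental group of this complement via the complex codimension of $\Sigma_D$.

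First, I would decompose $\Sigma_D=\bigcup_{\sigma\in Pr(\Sigma)}\Sigma_{D,\sigma}$, where $\Sigma_{D,\sigma}$ is the locus of $r$-tuples $(f_1,\ldots,f_r)\in\P^D$ for which the polynomials indexed by $\sigma$ share a common root in $\C$. A parameter count --- one complex parameter for the common root together with $|\sigma|$ independent linear conditions on the coefficients (as in the Vandermonde computation (\ref{equ: matrix equation}) in the proof of Lemma \ref{lemma: vector bundle*}) --- shows that each $\Sigma_{D,\sigma}$ has complex codimension $|\sigma|-1\ge \rmin(\Sigma)-1$ in $\P^D$.

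If $\rmin(\Sigma)\ge 3$, every component of $\Sigma_D$ has complex codimension at least $2$, so by the classical removal principle that the complement of a complex analytic subset of codimension $\ge 2$ in a simply connected complex manifold is again simply connected, $\Hol^*_D(S^2,\XS)$ is simply connected. If $\rmin(\Sigma)=2$, exactly the components $\Sigma_{D,\{i,j\}}$ indexed by primitive pairs are of complex codimension $1$; the remaining components are of codimension $\ge 2$ and can be removed without affecting $\pi_1$ of the complement by the same removal principle. Each surviving component is an irreducible resultant-type hypersurface, and I would check that any two distinct such hypersurfaces meet transversally along the smooth part of their codimension-two intersection --- the resultant conditions for two different primitive pairs impose linearly independent vanishing conditions on the coefficients. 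Once this normal-crossing property is in place, the standard Zariski--van Kampen argument shows that $\pi_1$ of the complement is generated by pairwise commuting meridians, one around each irreducible component, and is therefore abelian.

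The main obstacle will be the rigorous normal-crossing verification in the $\rmin(\Sigma)=2$ case, which I would carry out by unpacking the linear-algebraic description of the fibres of $\pi_D:Z_D\to\Sigma_D$ already used in the proof of Lemma \ref{lemma: vector bundle*}, and treating carefully the codimension-three loci where three or more of the hypersurfaces $\Sigma_{D,\{i,j\}}$ meet simultaneously (which may be deleted without affecting $\pi_1$).
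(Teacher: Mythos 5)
Your argument for the case $\rmin(\Sigma)\ge 3$ is correct but follows a genuinely different route from the paper's. You exploit the decomposition $\Sigma_D=\bigcup_{\sigma\in Pr(\Sigma)}\Sigma_{D,\sigma}$, where each $\Sigma_{D,\sigma}$ is the image under a proper projection of an affine subbundle of $\P^D\times\C$ of complex codimension $|\sigma|$, hence is a closed subvariety of complex codimension $|\sigma|-1\ge\rmin(\Sigma)-1\ge 2$ in $\P^D\cong\C^{N(D)}$; removing a closed analytic subset of real codimension at least $4$ from a contractible space leaves a simply connected complement. This is more self-contained than the paper's proof, which first shows $\pi_1(\Hol_D^*(S^2,\XS))$ is abelian via the string representation of loops from the Appendix of \cite{GKY1}, and then, for $\rmin(\Sigma)\ge 3$, combines the Hurewicz theorem with Corollary \ref{crl: homology iso} and Lemma \ref{lmm: connectedness} to conclude $H_1=0$ and hence $\pi_1=0$. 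Your version has the advantage of not depending on the homology stability machinery at all.

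The case $\rmin(\Sigma)=2$ is where your proposal has a genuine gap. After deleting the higher-codimension pieces you are left with the complement of a union of resultant hypersurfaces $\Sigma_{D,\{i,j\}}$, and you claim that transversality of distinct components together with ``the standard Zariski--van Kampen argument'' forces the meridian generators to commute. Zariski--van Kampen by itself only produces a presentation whose relations are governed by the braid monodromy; the commutativity of meridians for a hypersurface with generically normal-crossing singularities is the content of the Deligne--Fulton theorem on nodal curves, imported to higher dimensions via the Hamm--L\^e generic plane section theorem and, in the affine setting, requiring control of the behaviour at infinity. Moreover, the normal-crossing hypothesis concerns not only the mutual intersections of distinct components but also the self-singularities of each individual resultant hypersurface (the locus where $f_i$ and $f_j$ acquire more than one common root), which your plan does not address; complements of irreducible hypersurfaces with worse singularities can perfectly well have non-abelian fundamental group. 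These verifications can in fact be carried out, but as written the abelian-ness in the $\rmin(\Sigma)=2$ case is asserted rather than proved. The paper sidesteps all of this with a soft geometric argument: a loop in $\Hol_D^*(S^2,\XS)$ is a motion of root configurations in $\C$, and two such loops can be translated apart and slid past one another, which yields commutativity directly.
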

%%%%%%%%%%%%%%%%%%%%%%%%%%%%%%%%%%%
%%(Proof of Lemma 7.4)%%
\begin{proof}
%%%%%%
Suppose that $\rmin (\Sigma)\geq 2$.
By using the string representation of elements of
$\pi_1(\Hol_D^*(S^2,\XS))$ as in \cite[\S Appendix]{GKY1},
one can show that  $\pi_1(\Hol_D^*(S^2,\XS))$ is an abelian group
and the case $\rmin (\Sigma)=2$ was obtained.
Now assume that $\rmin (\Sigma)\geq 3$.
By Hurewicz Theorem, the Hurewicz homomorphism
$h:\pi_1(\Hol_D^*(S^2,\XS))\stackrel{\cong}{\rightarrow} H_1(\Hol_D^*(S^2,\XS),\Z)$ is an isomorphism.
Note that $\Omega^2_D\XS$ is at least $2$-connected
(by Lemma \ref{lmm: connectedness}).
Since
$d(D,\Sigma)=(2\rmin (\Sigma)-3)\dmin -2\geq 3\dmin-2\geq 1$, 
by  Corollary \ref{crl: homology iso}
the map $i_D$ induces the isomorphism
$
i_{D*}:
H_1(\Hol_D^*(S^2,\XS),\Z)
\stackrel{\cong}{\rightarrow}
H_1(\Omega^2_D\XS,\Z)=0$.
Hence, $\pi_1(\Hol_D^*(S^2,\XS))=0$
and the case $\rmin (\Sigma)\geq 3$ was also obtained.
\end{proof}
%%(End of proof of Lemma 7.4)%%
%
%
%
\par\vspace{1mm}\par

Now we can prove the main results 
(Theorem \ref{thm: I} and Corollary \ref{crl: I}).
%%
%%
%%
%%%%%%%%%%%%%%%%%%%%%%%%%%%%%%%%
%%%(Proof of Theorem 1.9)%%
\begin{proof}[Proof of Theorem \ref{thm: I}]
%%%%%%%%%%%%%%%%%%%%%%%%%%%%%%%%
By Corollary \ref{crl: homology iso},
it remains to prove that $i_D$ is a homotopy equivalence through dimension
$d(D,\Sigma)$ if $\rmin (\Sigma)\geq 3$.
Assume that $\rmin (\Sigma)\geq 3$.
Note that two spaces $\Hol_D^*(S^2,\XS)$ and $\Omega^2_D\XS$ are simply connected
by Lemma \ref{lmm: connectedness} and Lemma \ref{lmm: 1-connected}.
Hence, the map $i_D$ is a homotopy equivalence through dimension $d(D,\Sigma)$.
%%%%%%%%%%%%%%%%%%%%%%%%%%%%%%%
\end{proof}
%%%%%%%%(End of Theorem 1.9)%%%%%%
%
%
%
%
%
%%%
%%(Proof of Corollary 1.11)%%%%%%%%
\begin{proof}[Proof of Corollary \ref{crl: I}.]
%%%%%%%%%%%%%%%%%%%%%%%%%%%%%%%%%%%
Let $\XS$ be a compact smooth toric variety such that
$\Sigma (1)=\{\mbox{Cone}(\textit{\textbf{n}}_k):1\leq k\leq r\}$, 
where $\{\textit{\textbf{n}}_k\}_{k=1}^r$ are  primitive generators
as in \S 1.
Since $\XS$ is a compact, by (ii) of Lemma \ref{lmm: toric} we easily see that
the condition (\ref{equ: homogenous}.1) 
is satisfied for $\XS$.
Since $\Sigma_1\subsetneqq \Sigma$, by using
(i) of Lemma \ref{lmm: toric}
we  see that $X_{\Sigma_1}$ is a non-compact smooth toric subvariety of $\XS$.
Moreover, since
 $\Sigma (1)\subset \Sigma_1\subsetneqq \Sigma$, we see that
$\Sigma_1(1)=\Sigma (1)$.
Hence,  
the condition (\ref{equ: homogenous}.1) holds for $X_{\Sigma_1}$, too. 
%Note that the condition  (\ref{equ: homogenous}.2) also holds for ${X}_{\Sigma_1}$
%by the assumption.
Thus,
the assertion follows from Theorem \ref{thm: I}.
%%%
\end{proof}
%%(End of proof of Corollary 1.11)%%%%
%%
%%
%%
%%(Corollary 7.5)%%
\begin{crl}\label{crl: pi1}
%%%%%%%%%%%%%%%%%%
%%(i)%%
$\I$
%%%%%%%%
If $\rmin (\Sigma)\geq 3$,
the space $\Hol_D^*(S^2,\XS)$ is
$2(\rmin (\Sigma)-2)$-connected.
\par
%%(ii)%%
$\II$ 
%%%%%%%
If $\rmin(\Sigma)=2$ and 
$\dmin \geq 3$,
%the induced homomorphism
$i_{D*}:\pi_1(\Hol_D^*(S^2,\XS))
\stackrel{\cong}{\rightarrow}\pi_1(\Omega^2_D\XS)$ is an
isomorphism.
%%%
\end{crl}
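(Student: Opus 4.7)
The plan for (i) is to deduce the connectivity of $\Hol_D^*(S^2,\XS)$ directly from the codimension of its discriminant in $\P^D$. Recall that $\Hol_D^*(S^2,\XS)=\P^D\setminus\Sigma_D$, where $\P^D\cong\C^{N(D)}$ is contractible (see Definition \ref{dfn: holomorphic} and Definition \ref{Def: 3.1}). Each piece of the discriminant has the form $\Sigma_D^{(\sigma)}=\{F\in\P^D:F(x)\in L_\sigma\text{ for some }x\in\C\}$ for a primitive collection $\sigma$; since $L_\sigma\subset\C^r$ has complex codimension $|\sigma|\geq\rmin(\Sigma)$ and one sweeps only a single complex parameter $x$, the image $\Sigma_D^{(\sigma)}$ has complex codimension at least $\rmin(\Sigma)-1$ in $\P^D$. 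Hence $\Sigma_D$ is a closed complex algebraic subvariety of real codimension at least $2(\rmin(\Sigma)-1)$ in $\P^D$. A semi-algebraic triangulation together with the standard general-position argument for CW pairs then shows that $(\P^D,\Hol_D^*(S^2,\XS))$ is $(2\rmin(\Sigma)-3)$-connected; the long exact sequence of homotopy groups and the contractibility of $\P^D$ now give $\pi_j(\Hol_D^*(S^2,\XS))=0$ for every $j\leq 2(\rmin(\Sigma)-2)$, which is (i).

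For (ii), the plan is to promote the homology equivalence of Theorem \ref{thm: I} to a $\pi_1$-isomorphism via the naturality of Hurewicz. Under the hypothesis $\rmin(\Sigma)=2$ and $\dmin\geq 3$, Theorem \ref{thm: I} yields that $i_D$ is a homology equivalence through dimension $\dmin-2\geq 1$, so in particular the induced map $i_{D*}\colon H_1(\Hol_D^*(S^2,\XS),\Z)\to H_1(\Omega^2_D\XS,\Z)$ is an isomorphism. By Lemma \ref{lmm: 1-connected}, $\pi_1(\Hol_D^*(S^2,\XS))$ is abelian in this regime; meanwhile $\pi_1(\Omega^2_D\XS)\cong\pi_3(\XS)$ is abelian since it is a higher homotopy group. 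Since both spaces are path-connected with abelian fundamental group, the Hurewicz homomorphism gives an isomorphism $\pi_1\cong H_1$ on each side, and naturality then forces $i_{D*}$ on $\pi_1$ to be an isomorphism.

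The main technical point I foresee is the rigorous setup of the codimension argument in (i). While the dimension count itself is straightforward, justifying the general-position claim requires invoking a semi-algebraic triangulation of $\P^D$ in which $\Sigma_D$ appears as a subcomplex of dimension at most $2N(D)-2\rmin(\Sigma)+2$, after which the classical result on the connectivity of the complement of a subcomplex applies. The argument for (ii) is essentially formal once the abelianness of both $\pi_1$'s is noted.
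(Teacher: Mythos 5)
Your proof of part (ii) is correct and is essentially the paper's own argument: Lemma \ref{lmm: 1-connected} makes $\pi_1(\Hol_D^*(S^2,\XS))$ abelian, $\pi_1(\Omega^2_D\XS)\cong\pi_3(\XS)$ is abelian, and naturality of the Hurewicz map transports the $H_1$-isomorphism from Theorem \ref{thm: I} to $\pi_1$. Part (i), however, takes a genuinely different route. The paper deduces (i) in one line by combining Lemma \ref{lmm: connectedness} (the target $\Omega^2_D\XS$ is $2(\rmin(\Sigma)-2)$-connected) with the homotopy equivalence through dimension $d(D,\Sigma)$ from Theorem \ref{thm: I}; note that this deduction silently needs $d(D,\Sigma)\geq 2(\rmin(\Sigma)-2)$, which fails when $\dmin=1$. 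Your argument instead bounds the dimension of the discriminant $\Sigma_D\subset\P^D\cong\C^{N(D)}$ directly: since $L(\Sigma)=\bigcup_{\sigma\in Pr(\Sigma)}L_\sigma$ with each $L_\sigma$ of complex codimension $|\sigma|\geq\rmin(\Sigma)$, the incidence variety in $\P^D\times\C$ has complex codimension $\geq\rmin(\Sigma)$ and its projection $\Sigma_D$ has complex codimension $\geq\rmin(\Sigma)-1$, so the complement of this closed semi-algebraic set in a contractible space is $2(\rmin(\Sigma)-2)$-connected by general position. This is correct (one should note that $\Sigma_D$ is indeed closed, because the polynomials are monic of fixed degrees so common roots cannot escape to infinity, and semi-algebraic by Tarski--Seidenberg, so the transversality argument applies), it is independent of the heavy machinery of Sections 5--7, and it covers all $D$ uniformly, including the $\dmin=1$ case. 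What you lose is only economy relative to results already established in the paper; what you gain is a self-contained and slightly stronger justification of (i).
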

%%%
\begin{proof}
%%%%
Since the first assertion easily follows from Lemma \ref{lmm: connectedness}
and Theorem \ref{thm: I}, it suffices to prove the assertion (ii).
Now assume that $\rmin (\Sigma)=2$.
Then by Lemma \ref{lmm: 1-connected}, the Hurewicz homomorphism
$h:\pi_1(\Hol_D^*(S^2,\XS))
\stackrel{\cong}{\rightarrow} 
H_1(\Hol_D^*(S^2,\XS),\Z)$ is an isomorphism.
Consider the commutative diagram
$$
\begin{CD}
\pi_1(\Hol_D^*(S^2,\XS)) @>{i_{D*}}>> \pi_1(\Omega^2_D\XS)
\\
@V{h}V{\cong}V @V{h}V{\cong}V
\\
H_1(\Hol_D^*(S^2,\XS),\Z) @>{i_{D*}}>{\cong}> H_1(\Omega^2_D\XS,\Z)
\end{CD}
$$
If $\dmin \geq 3$, $d(D,\Sigma)=\dmin -2\geq 1$.
Hence, by Theorem \ref{thm: I}, the induced homomorphism $i_{D*}$
on the homology is an isomorphism and the assertion (ii) follows
from the above commutative diagram.
%%%
\end{proof}
%%(End of proof of Corollary 7.5)%%

%%%%%%%%%%%%%%%%%%%%%%%%%%%%%%%%%%%%%%%%%%%%%%%%%%

%%()%%%%%%
\par\vspace{2mm}\par
\noindent{\bf Acknowledgements. }
%%%%%%%%
The authors should like to take this opportunity to thank  
Professor Masahiro Ohno 
for his many valuable  insights and suggestions concerning toric varieties.
%%%
%\par
The second author was supported by 
JSPS KAKENHI Grant Number 26400083.

%% The Appendices part is started with the command \appendix;
%% appendix sections are then done as normal sections
%% \appendix

%% \section{}
%% \label{}

%% References
%%
%% Following citation commands can be used in the body text:
%% Usage of \cite is as follows:
%%   \cite{key}          ==>>  [#]
%%   \cite[chap. 2]{key} ==>>  [#, chap. 2]
%%   \citet{key}         ==>>  Author [#]

%% References with bibTeX database:

%\bibliographystyle{model1-num-names}
%\bibliography{<your-bib-database>}

%% Authors are advised to submit their bibtex database files. They are
%% requested to list a bibtex style file in the manuscript if they do
%% not want to use model1-num-names.bst.

%% References without bibTeX database:

\end{document}